\renewcommand{\arraystretch}{0.8}
\renewcommand\P[1]{\mathrm{P}(#1)}
\newcommand  \D[1]{\mathrm{D}(#1)}
\newcommand \degdet[1]{\delta_n(#1)}
\newcommand\tdegdet[1]{\hat{\delta}_n(#1)}
\newcommand\reor[1]{{#1}^\circ}
\newcommand\LM[1]{{#1}^{\mathrm{LM}}}
\newcommand{\CC}{C\nolinebreak\hspace{-.05em}\raisebox{.4ex}{\tiny\textbf{+}\nolinebreak\hspace{-.10em}\tiny\textbf{+}}}
\renewcommand\setF{\mathbf{F}}
\renewcommand\setK{\mathbf{K}}
\newcommand{\T}{\mathsf{T}}
\newcommand\overmat[3]{%
  \BAmulticolumn{#1}{c}{%
    \overbrace{%
      \hphantom{%
        \begin{matrix}#2\end{matrix}%
      }%
    }^{#3}%
  }%
}
\newcommand\rightmat[2]{\rdelim\}{#1}{0em}[$\scriptstyle{#2}$]}
\title{%
  Index Reduction for Differential-Algebraic Equations\\
  with Mixed Matrices%
  \footnote{A preliminary version of this paper is to appear in Proceedings of the Eighth SIAM Workshop on Combinatorial Scientific Computing, Bergen, Norway, June 2018.}
}
\author{Satoru Iwata%
  \thanks{%
    Department of Mathematical Informatics,
    Graduate School of Information Science and Technology,
    The University of Tokyo,
    Hongo 7-3-1, Bunkyo-ku, Tokyo 113-8656, Japan.
    E-mail: \{\href{iwata@mist.i.u-tokyo.ac.jp}{\texttt{iwata}}, %
             \href{taihei_oki@mist.i.u-tokyo.ac.jp}{\texttt{taihei\_oki}}%
           \}\texttt{@mist.i.u-tokyo.ac.jp}%
  }%
  \and%
  Taihei Oki%
  \footnotemark[2]%
  \and%
  Mizuyo Takamatsu%
  \thanks{%
    Department of Information and System Engineering,
    Chuo University,
    Kasuga 1-13-27, Bunkyo-ku, Tokyo 112-8551, Japan.
    E-mail: \url{takamatsu@ise.chuo-u.ac.jp}%
  }
}
\newcommand{\mykeywords}{differential-algebraic equations, index reduction, combinatorial relaxation, matroid theory, combinatorial matrix theory, combinatorial scientific computing}
\begin{document}

\maketitle

\begin{abstract}
  Differential-algebraic equations (DAEs) are widely used for modeling of dynamical systems.
  The difficulty in solving numerically a DAE is measured by its differentiation index.
  For highly accurate simulation of dynamical systems, it is important to convert high-index DAEs into low-index DAEs.
  Most of existing simulation software packages for dynamical systems are equipped with an index-reduction algorithm given by Mattsson and S\"{o}derlind.
  Unfortunately, this algorithm fails if there are numerical cancellations. 
   
  These numerical cancellations are often caused by accurate constants in structural equations.
  Distinguishing those accurate constants from generic parameters that represent physical quantities, Murota and Iri introduced the notion of a mixed matrix as a mathematical tool for faithful model description in structural approach to systems analysis. 
  For DAEs described with the use of mixed matrices, efficient algorithms to compute the index have been developed by exploiting matroid theory. 
  
  This paper presents an index-reduction algorithm for linear DAEs whose coefficient matrices are mixed matrices, i.e., linear DAEs containing physical quantities as parameters.
  Our algorithm detects numerical cancellations between accurate constants, and transforms a DAE into an equivalent DAE to which Mattsson--S\"{o}derlind's index-reduction algorithm is applicable.
  Our algorithm is based on the combinatorial relaxation approach, which is a framework to solve a linear algebraic problem by iteratively relaxing it into an efficiently solvable combinatorial optimization problem.
  The algorithm does not rely on symbolic manipulations but on fast combinatorial algorithms on graphs and matroids.
  Our algorithm is proved to work for any linear DAEs whose coefficient matrices are mixed matrices.
  Furthermore, we provide an improved algorithm under an assumption based on dimensional analysis of dynamical systems. 
  Through numerical experiments, it is confirmed that our algorithms run sufficiently fast for large-scale DAEs, and output DAEs such that physical meanings of coefficients are easy to interpret.
  Our algorithms can also be applied to nonlinear DAEs by regarding nonlinear terms as parameters.
  
  \bigskip\noindent\textbf{Keywords:} \mykeywords
\end{abstract}

\section{Introduction}
An $l$th order \emph{differential-algebraic equation} (DAE) for $\funcdoms{x}{\setR}{\setR^n}$ is a differential equation in the form of
\begin{align} \label{def:dae}
  F\prn[\Big]{t, x(t), \dot{x}(t), \ldots, x^{(l)}(t)} = 0,
\end{align}
where $\funcdoms{F}{\setR \times \setR^n \times \cdots \times \setR^n}{\setR^n}$ is a sufficiently smooth function.
DAEs have aspects of both ordinary differential equations (ODEs) $\dot{x}(t) = \phi(t, x(t))$ and algebraic equations $G(t, x(t)) = 0$.
DAEs are widely used for modeling of dynamical systems, such as mechanical systems, electrical circuits and chemical reaction plants.

The difficulty in solving numerically a DAE is measured by its \emph{differentiation index}~\cite{Brenan1996}, which is defined for a first-order DAE
\begin{align} \label{def:1st_dae}
  F\prn[\big]{t, x(t), \dot{x}(t)} = 0
\end{align}
as the smallest nonnegative integer $\nu$ such that the system of equations
\begin{align}
  F\prn[\big]{t, x(t), \dot{x}(t)} = 0, \quad
  \dif{}{t} F\prn[\big]{t, x(t), \dot{x}(t)} = 0, \quad
  \ldots, \quad
  \dif{^\nu}{t^\nu} F\prn[\big]{t, x(t), \dot{x}(t)} = 0
\end{align}
can determine $\dot{x}$ as a continuous function of $t$ and $x$.
That is, $\nu$ is the number of times one has to differentiate the DAE~\eqref{def:1st_dae} to obtain an ODE.
Intuitively, the differentiation index represents how far the DAE is from an ODE.
The differentiation index of an $l$th order DAE~\eqref{def:dae} is defined as that of the first-order DAE obtained by replacing higher-order derivatives of $x$ with newly introduced variables.

A common approach for solving a high $\prn{\geq 2}$ index DAE is to convert it into a low $\prn{\leq 1}$ index DAE. 
This process is called \emph{index reduction}, and it is important for accurate simulation of dynamical systems. 
Most of existing simulation software packages for dynamical systems, such as Dymola, OpenModelica, MapleSim and Simulink, are equipped with the index-reduction algorithm given by Mattsson--S\"{o}derlind~\cite{Mattsson1993} (MS-algorithm).
The MS-algorithm uses Pantelides' method~\cite{Pantelides1988} as a preprocessing step.
Pantelides' method constructs a bipartite graph from structural information of a given DAE and solves an assignment problem on the bipartite graph efficiently.
The MS-algorithm then differentiates equations in the DAE with the aid of the information obtained by Pantelides' method, and replaces some derivatives with dummy variables.
The MS-algorithm returns a sparse DAE if the given DAE is sparse, and thus the algorithm can be applied to large scale DAEs.

Pantelides' method, however, does not work correctly even for the following simple DAE
\begin{align}
  \begin{cases}
    \dot{x}_1 + \dot{x}_2 + x_3 = 0, \\
    \dot{x}_1 + \dot{x}_2 \phantom{{}+x_3} = 0, \\
    \phantom{x_1+{}} x_2 + \dot{x}_3 = 0. \\
  \end{cases}
\end{align}
Pantelides' algorithm reports that the index is zero, whereas it is indeed two.
This is because the method cannot detect the singularity of the coefficient matrix
\begin{align}
  \begin{pmatrix}
    1 & 1 & 0 \\
    1 & 1 & 0 \\
    0 & 0 & 1
  \end{pmatrix}
\end{align}
of $(\dot{x}_1, \dot{x}_2, \dot{x}_3)^\top$.
As this toy example shows, Pantelides' method, which ignores numerical information, may fail on some DAEs due to numerical cancellations.
This kind of failure can also occur in other methods to reduce the index or to analyze DAEs such as the structural algorithm of Unger et al.~\cite{Unger1995} and the $\Sigma$-method of Pryce~\cite{Pryce2001}.

Some index reduction algorithms address this problem.
One example is the $\sigma \nu$-method by Chowdhry et al.~\cite{Chowdhry2004}, which is based on the algorithm by Unger et al.~\cite{Unger1995}.
The method performs Gaussian elimination on the Jacobian matrix $\partial F / \partial \dot{x}$ under the assumption that nonlinear or time-varying terms do not cancel out.
For first-order linear DAEs with constant coefficients, Wu et al.~\cite{Wu2013} proposed a method (WZC-method) to transform a DAE into an equivalent DAE to which the MS-algorithm is applicable.
This method adopts the \emph{combinatorial relaxation} framework introduced by Murota~\cite{Murota1990, Murota1995a} to solve a linear algebraic problem by iteratively relaxing it into an efficiently solvable combinatorial optimization problem.
The combinatorial relaxation based approach was extended for nonlinear DAEs implicitly by Tan et al.~\cite{Tan2017} as the LC-method and the ES-method, and explicitly by~\cite{Oki2019} as the substitution and augmentation methods.
These methods identify numeric or symbolic cancellations and modify the DAE if necessary.

Pantelides' method~\cite{Pantelides1988} and the $\Sigma$-method~\cite{Pryce2001} discard numerical information, which sometimes leads to a failure of the methods. 
In dynamical systems, specific numbers in structural equations, such as in the conservation laws, should be treated as constants, while we can deal with physical characteristic values as nonzero parameters without reference to their values. 
For a faithful model of a dynamical system, it is natural to distinguish accurate and inaccurate numbers. 
This led Murota--Iri~\cite{Murota1985a} to introduce the notion of a \emph{mixed matrix}, which is a matrix consisting of the following two kinds of entries: 
\begin{description}
  \item[Accurate Constants\normalfont{,}]
    which represent precise values such as coefficients of conservation laws.
    We assume that arithmetic operations with these constants can be performed in constant time.
    
  \item[Independent Parameters\normalfont{,}]
    which are algebraically independent over the field of accurate constants.
    These parameters often represent physical quantities such as masses, lengths or electric resistances since their values are inaccurate by measurement noise and other errors.
    These parameters should be treated combinatorially without reference to their values.
\end{description}

For example, consider an electric network consisting of voltage sources, resistances and wires connecting them.
A system of linear equations representing the circuit has two kinds of coefficients: the exact `$\pm 1$'s coming from Kirchhoff's laws, and the resistance values coming from Ohm's law.
Since the values of resistances are usually inaccurate, it is natural to model the system by a linear equation with a mixed matrix, where constants and parameters represent the exact `$\pm 1$'s and the resistances, respectively.
See an example in \cref{sec:mixed_matrices_and_mixed_polynomial_matrices} for modeling of an RLC circuit with a mixed matrix.

Mixed matrices can be handled by symbolic computation systems.
However, the computational cost of symbolic manipulation grows explosively when the size of matrices increases.
Efficient algorithms without symbolic manipulation are available for the rank computation~\cite{Murota1993}.
If all nonzero entries of a matrix are independent parameters, then its rank is equal to the maximum size of a matching in an associated bipartite graph.
For a mixed matrix, the rank computation corresponds to solving an independent matching problem on matroids, which is a generalization of the maximum matching problem on bipartite graphs.
An efficient algorithm based on matroid theory is provided for the rank computation of mixed matrices; see \cite{Murota2000} for details.
Algorithms combining the combinatorial relaxation and mixed matrices are presented in~\cite{Iwata2001, Iwata2013, Sato2015}.
\cref{ref:combinatorial_relaxation_algorithm_for_mixed_polynomial_matrices} describes the history of this combination in detail.

In this paper, we provide an index reduction algorithm for a linear DAE
\begin{align} \label{def:linear_dae}
  \sum_{k=0}^l A_k x^{(k)}(t) = f(t)
\end{align}
with $n \times n$ mixed matrices $A_0, A_1, \ldots, A_l$ and a sufficiently smooth function $\funcdoms{f}{\setR}{\setR^n}$.

A typical procedure to analyze a linear dynamical system with our algorithm is as follows.
First, we model the system by a linear DAE~\eqref{def:linear_dae} with mixed matrices.
Next, we apply our algorithm to the DAE and obtain a low-index one.
We finally obtain a numerical solution by applying a numerical scheme to the low-index DAE after substituting specific values of physical quantities.

As described above, a notable feature of our algorithm is that it works for linear DAEs containing physical quantities not as their values but as parameters.
This feature is advantageous in the following points.
First, since accurate constants arising from typical dynamical systems are integers or rational numbers, our algorithm can avoid arithmetic operations with floating-point numbers.
This fact makes it numerically stable, as index reduction algorithms involve nonsingularity checking of matrices.
Second, since our algorithm can utilize the beforehand knowledge that independent parameters do not cause numerical cancellations, our algorithm is expected to run faster for DAEs with dense coefficient matrices than other index reduction algorithms.
Third, when we simulate a dynamical system on many different values of physical quantities, we can reuse the resulting low-index DAE as long as the values of physical quantities do not unluckily cancel out.

Our algorithm is based on the combinatorial relaxation framework as the WZC-method.
To detect and resolve numerical cancellations in mixed matrices without using symbolic manipulations, we present a new combinatorial relaxation algorithm relying on the theory of combinatorial optimization algorithms on matroids.
Our algorithm is proved to run in $\Order\prn{l^2 n^{\omega + 2}}$ time, where $\omega$ is the matrix multiplication exponent, i.e., the number of arithmetic operations needed to multiply two $n \times n$ matrices is $\Order\prn{n^\omega}$.
The current best known value of $\omega$ is $\approx 2.3728639$ due to~\cite{Gall2014}.
In practice, however, we adopt $\omega = 3$ for performance comparisons because large coefficients are hidden in the big-O notation of the time complexity of fast matrix multiplication algorithms.
Our algorithm is expected to run much faster in most cases because it terminates without modifying the DAE unless it has numerical cancellations.

In addition, we give an improved algorithm for DAEs whose coefficients are \emph{dimensionally consistent}.
The dimensional consistency, which is introduced by Murota~\cite{Murota1985b}, is a mathematical assumption on mixed matrices reflecting the principle of dimensional homogeneity in physical systems.
DAEs arising from dynamical systems naturally ensure this assumption.
We show that the improved algorithm retains the dimensional consistency, and 
that the running time is $\Order\prn{l n^4 \log n}$.
In practice, we confirm through numerical experiments that this algorithm is so fast that it runs in 12 minutes for a DAE with sparse coefficient matrix of size $131{,}076 \times 131{,}076$ having 393,223 nonzero entries.
The running time grows proportionally as $\Order\prn{n^3}$ for dense DAEs and $\Order\prn{n^2}$ for sparse DAEs in our experiments.

It is further confirmed that our algorithm modifies DAEs preserving ``physical meanings'' of dynamical systems.
For example, for a DAE representing the Butterworth filter via the fourth Cauer topology (see equation~\eqref{eq:cauer} in \cref{sec:experiments}), the existing method (LC-method) and our algorithm return
\begin{align} \label{eq:result_DAE}
  \renewcommand{\arraystretch}{0.5}
  \left\{\begin{aligned}
    -\xi_0 + \xi_1 + \xi_2 &= 0, \\
    -\xi_2 + \xi_3 + \xi_4 &= 0, \\
    -\xi_0 + \xi_1 + \xi_3 - 0.243624 \dot{\xi}_4 + 0.318310 \dot{\eta}_3 &= 0, \\
    \eta_1 &= -V(t), \\
    z_1^{[1]} &= -\dot{V}(t),\\
    1.847759 \dot{\xi}_2 - \eta_1 + \eta_3 &= 0, \\
    0.765367 \dot{\xi}_4 + 3.141593 \xi_5 - \eta_3 &= 0, \\
    \eta_0 &= V(t), \\
    -\xi_1 + 0.765367 z_1^{[1]} &= 0, \\
    1.847759 \dot{\xi}_2 - \eta_2 &= 0, \\
    -\xi_3 + 1.847759 \dot{\eta}_3 &= 0, \\
    0.765367 \dot{\xi}_4 - \eta_4 &= 0, \\
    3.141593 \xi_5 - \eta_5 &= 0
  \end{aligned}\right.
  \text{and} \quad
  \left\{\begin{aligned}
    -\xi_0 + \xi_1 + \xi_2 &= 0, \\
    -\xi_2 + \xi_3 + \xi_4 &= 0, \\
    -\xi_0 + \xi_1 + \xi_3 + \xi_5 &= 0, \\
    \eta_0 + \eta_1 &= 0, \\
    z_0^{[1]} + z_1^{[1]} &= 0, \\
    -\eta_1 + \eta_2 + \eta_3 &= 0, \\
    -\eta_3 + \eta_4 + \eta_5 &= 0, \\
    \eta_0 &= V(t), \\
    z_0^{[1]} &= \dot{V}(t), \\
    -\xi_1 + C_1 z_1^{[1]} &= 0, \\
    L_2 \dot{\xi}_2 - \eta_2 &= 0, \\
    -\xi_3 + C_3 \dot{\eta}_3 &= 0, \\
    L_4 \dot{\xi}_4 - \eta_4 &= 0, \\
    R \xi_5 - \eta_5 &= 0,
  \end{aligned}\right.
\end{align}
respectively.
Here $\xi_0, \ldots, \xi_5, \eta_0, \ldots, \eta_5, z_0^{[1]}$ and  $z_1^{[1]}$ are variables of these DAEs, $\funcdoms{V}{\setR}{\setR}$ is a smooth function, and $C_1, C_3, L_2, L_4, R$ are constants representing physical quantities in the circuit.
As the LC-method requires substituting specific values into physical quantities beforehand, the values of physical quantities $C_1 = L_4 \simeq 0.765367$, $L_2 = C_3 \simeq 1.847759$ and $R \simeq 3.141593$ are scattered around the left DAE in~\eqref{eq:result_DAE}, and some coefficients are combinations of them: $1/R \simeq 0.318310$ and $L_4/R \simeq 0.243624$.
This makes it difficult to guess where such complicated coefficients come from and how the DAE changes when the values of physical quantities are varied.
However, our algorithm is directly applicable to DAEs containing physical quantities as parameters.
Thus the right DAE in~\eqref{eq:result_DAE} still retains units of physical quantities, and it is easy to interpret what the coefficients mean in the circuit.

Furthermore, though our index reduction algorithm is designed for linear DAEs, it can be applied to nonlinear DAEs by regarding nonlinear terms as independent parameters.
A similar approach is adopted in the $\sigma \nu$-method of Chowdhry et al.~\cite{Chowdhry2004}, which is adopted in Mathematica~\cite{mathematica}.
However, our method is expected to be applicable to a larger class of nonlinear DAEs than the $\sigma \nu$-method because our method does not transform a DAE involving nonlinear terms.
Indeed, consider the index-2 nonlinear DAE
\begin{align} \label{def:nonlinear_dae}
  \left\{\begin{alignedat}{4}
    F_1:\:\: & \dot{x}_1 + g(x_2)  &&= f_1(t), \\
    F_2:\:\: & \dot{x}_1 + x_1 + x_3 &&= f_2(t), \\
    F_3:\:\: & \dot{x}_1 \phantom{{}+x_1} + x_3 &&= f_3(t)
  \end{alignedat}\right.
\end{align}
with smooth functions $\funcdoms{f_1, f_2, f_3, g}{\setR}{\setR}$.
Our algorithm correctly returns an index-1 DAE equivalent to~\eqref{def:nonlinear_dae}, whereas the implementation of the $\sigma \nu$-method in Mathematica unsuccessfully returns an index-2 DAE.
See \cref{sec:nonlinear_dae} for details.

\paragraph{Related work.}

We describe the relation between the proposed algorithm and related index reduction algorithms.
If all nonzero entries of $A(s)$ are independent parameters, our algorithm just passes a given DAE to the MS-method.
In contrast, if $A(s)$ has no independent parameters, then our algorithm coincides with the LC-method by Tan et al.~\cite{Tan2017} and with the substitution method~\cite{Oki2019} applied to linear DAEs with constant coefficients.
We emphasize that our algorithm can treat intermediate DAEs between these special cases, i.e., it works for DAEs containing both accurate constants and independent parameters.

The WZC-method by Wu et al.~\cite{Wu2013} works for first-order linear DAEs with constant coefficients.
This method modifies a DAE using the combinatorial relaxation method in~\cite{Iwata2003}, which performs row and column operations on $A(s)$ using constant matrices.
Here, column operations on $A(s)$ correspond to changing the basis of the variable space of DAEs.
Our combinatorial relaxation algorithm does not use column operations, and thus the basis of the variable space remains unchanged.

A recent work~\cite{Iwata2018a} has proposed an index reduction algorithm which is proved to work for any instances of first order linear DAEs with constant coefficients.
The algorithm directly reduces the index of a given DAE by row operations, whereas our algorithm only resolves numerical cancellations in a DAE and eventually relies on the MS-algorithm for the actual index reduction process.
Thus our algorithm is expected to preserve the sparsity of DAEs compared to the algorithm in~\cite{Iwata2018a}.

In addition, our algorithm is similar to the $\sigma \nu$-method~\cite{Chowdhry2004} in the sense that both methods treat matrices having accurate constants and independent parameters, yet their approaches are quite different; the $\sigma \nu$-method is based on the Gaussian elimination approach by Gear~\cite{Gear1988}, whereas our algorithm relies on the dummy variable approach by Mattsson--S\"{o}derlind~\cite{Mattsson1993}.

\paragraph{Organization.}

The rest of this paper is organized as follows.
\Cref{sec:index_reduction_for_linear_DAEs} reviews the previous index computation and reduction algorithms for linear DAEs with constant coefficients, including the MS-algorithm and combinatorial relaxation algorithms.
\Cref{sec:mixed_matrices} explains mixed matrices and their rank identities.
\Cref{sec:algorithm} describes the proposed algorithm.
\Cref{sec:exploiting_dimensional_consistency} improves our algorithm under the assumption of the dimensional consistency.
\Cref{sec:example} illustrates the theory by two examples.
\Cref{sec:experiments} shows the result of numerical experiments.
\Cref{sec:nonlinear_dae} discusses an application to nonlinear DAEs.
Finally, \cref{sec:conclusion} concludes this paper.

\section{Index Reduction for Linear DAEs}
\label{sec:index_reduction_for_linear_DAEs}

\subsection{Index of Linear DAEs}

A linear DAE with constant coefficients is
\begin{align} \label{def:linear_dae2}
  \sum_{k=0}^l A_k x^{(k)}(t)= f(t),
\end{align}
where $A_0, A_1, \ldots, A_l$ are $n \times n$ matrices and $\funcdoms{f}{\setR}{\setR^n}$ is a sufficiently smooth function.
We assume that $f$ is Laplace transformable for simplicity, though this assumption is not essential.
By the Laplace transformation, the DAE~\eqref{def:linear_dae2} is transformed into
\begin{align} \label{eq:laplaced_dae2}
  A(s) \widetilde{x}(s)
  = \widetilde{f}(s) + \sum_{k=0}^l \sum_{i=1}^k s^{k-i} A_k x^{(i-1)}(0),
\end{align}
where $\widetilde{x}(s)$ and $\tilde{f}(s)$ are the Laplace transforms of $x(t)$ and $f(t)$, respectively, and $A(s) = \sum_{k=0}^l s^k A_k$.
We henceforth denote the right-hand side of~\eqref{eq:laplaced_dae2} by $\hat{f}(s)$.
The matrix $A(s)$ is a matrix whose entries are polynomials, called a \emph{polynomial matrix}.
We say that $A(s)$ is \emph{nonsingular} if its determinant is not identically zero.

An initial value $\prn[\big]{x_0, x^{(1)}_0, \ldots, x^{(l-1)}_0} \in \setR^n \times \cdots \times \setR^n$ is said to be \emph{consistent} if there exists at least one solution of~\eqref{def:linear_dae2} satisfying
\begin{align} \label{cond:initial_value}
  x(0) = x_0, \, \dot{x}(0) = x^{(1)}_0, \, \ldots, \, x^{(l-1)}(0) = x^{(l-1)}_0.
\end{align}
We say that the DAE~\eqref{def:linear_dae2} is \emph{solvable} if there exists a unique solution of~\eqref{def:linear_dae2} satisfying the initial value condition~\eqref{cond:initial_value} for an arbitrary consistent point.
The solvability of~\eqref{def:linear_dae2} is characterized by $A(s)$ as follows.

\begin{theorem}[{\cite{Brenan1996, Shi2004}}]
  A linear DAE~\eqref{def:linear_dae2} is solvable if and only if the associated polynomial matrix $A(s)$ is nonsingular.
\end{theorem}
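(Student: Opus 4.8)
The plan is to prove both directions by analyzing the Laplace-transformed system~\eqref{eq:laplaced_dae2}, namely $A(s)\tilde{x}(s) = \hat{f}(s)$. The key observation is that solvability of the DAE~\eqref{def:linear_dae2} for every consistent initial value is equivalent, via the Laplace transform, to the existence of a \emph{unique} rational-function solution $\tilde{x}(s)$ of this linear algebraic system, which in turn is governed by the nonsingularity of $A(s)$ over the field of rational functions $\setR(s)$.

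For the ``if'' direction, suppose $A(s)$ is nonsingular. Then $A(s)^{-1}$ exists as a matrix of rational functions, so $\tilde{x}(s) = A(s)^{-1}\hat{f}(s)$ is the unique candidate solution in the Laplace domain. The work here is to check that this $\tilde{x}(s)$ is a genuine Laplace transform (e.g.\ proper after accounting for the polynomial part, or handled by allowing distributional/impulsive components), that inverting it yields a function $x(t)$ satisfying~\eqref{def:linear_dae2}, and that the consistency hypothesis on $\prn[\big]{x_0, x^{(1)}_0, \ldots, x^{(l-1)}_0}$ is exactly what makes the initial-value terms in $\hat{f}(s)$ compatible so that $x^{(i-1)}(0) = x^{(i-1)}_0$ holds. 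Uniqueness is immediate from invertibility of $A(s)$: any two solutions have the same Laplace transform and hence coincide.

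For the ``only if'' direction, I would argue by contraposition: if $A(s)$ is singular, then $\det A(s) \equiv 0$, so the columns of $A(s)$ are linearly dependent over $\setR(s)$, and there is a nonzero polynomial vector $v(s)$ with $A(s)v(s) = 0$. Clearing denominators and interpreting $v(s)$ back in the time domain produces a nonzero solution $y(t)$ of the homogeneous DAE $\sum_{k=0}^l A_k y^{(k)}(t) = 0$ with zero initial data up to order $l-1$; adding $y$ to any solution of~\eqref{def:linear_dae2} violates uniqueness. One must also confirm that a consistent initial value exists in the first place (so that ``solvable'' is not vacuously satisfied) — e.g.\ the zero initial value is consistent for $f \equiv 0$ — so that the failure of uniqueness genuinely refutes solvability.

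The main obstacle is the analytic bookkeeping around the Laplace transform: ensuring that the formal manipulation $\tilde{x}(s) = A(s)^{-1}\hat{f}(s)$ corresponds to an actual (sufficiently smooth) time-domain solution, and carefully matching the initial-condition terms $\sum_{k=0}^l\sum_{i=1}^k s^{k-i}A_k x^{(i-1)}(0)$ in $\hat{f}(s)$ with the definition of consistency. Since this theorem is quoted from~\cite{Brenan1996, Shi2004}, I would lean on the Weierstrass--Kronecker canonical form (or the Smith--McMillan form of $A(s)$) to make the decomposition into a ``slow'' ODE part and a ``fast'' purely algebraic part rigorous, which cleanly separates the solvable case (regular pencil / nonsingular $A(s)$) from the singular one.
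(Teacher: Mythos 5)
The paper itself contains no proof of this statement: it is quoted from the literature, with the case $l=1$ referred to Theorem~2.3.1 of Brenan et al.\ and $l\geq 2$ to Theorems~2.22--23 of Shi. So the only meaningful comparison is with those standard arguments, and your closing plan --- decouple via the Weierstrass--Kronecker form of a regular pencil (for $l=1$), or a Smith-form/unimodular decoupling of $A(s)$ into a ``slow'' ODE part and a nilpotent/purely algebraic part (for general $l$) --- is precisely the route those references take; the Laplace-transform framing is essentially a heuristic wrapper around it, and the ``if'' direction is sound once it is routed through that canonical form.

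Two steps in your sketch need repair, both in the ``only if'' direction. First, ``interpreting $v(s)$ back in the time domain'' does not by itself produce a solution: the inverse Laplace transform of a nonzero polynomial vector is a combination of Dirac impulses, not an admissible smooth solution with zero initial data. The correct construction is to read $v$ as a differential operator and set $y(t)=\sum_j v_j\,g^{(j)}(t)$ for a scalar smooth function $g$; then $\sum_{k} A_k y^{(k)}\equiv 0$ because the coefficients of $A(s)v(s)$ all vanish, and one must still choose $g$ (for instance smooth, identically zero in a neighborhood of $t=0$, nonzero elsewhere) and argue that $y\not\equiv 0$ --- using that a function annihilated by a nonzero scalar constant-coefficient operator cannot vanish on an interval without vanishing identically --- to obtain a nonzero homogeneous solution with vanishing initial data of all relevant orders. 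Second, your consistency remark fixes the vacuity issue with the wrong forcing: the paper's definition of solvability refers to the given $f$ in~\eqref{def:linear_dae2}, not to $f\equiv 0$. When $A(s)$ is singular, a nonzero left null vector $u(s)$ of $A(s)$ imposes the compatibility condition $u(\mathrm{d}/\mathrm{d}t)^{\top}f\equiv 0$, and for $f$ violating it the DAE has no solutions at all, hence no consistent points, so the definition is vacuously satisfied. To close this direction you must either exhibit a consistent point of the given DAE (in which case adding $y$ destroys uniqueness there) or adopt the cited references' convention that solvability quantifies over all sufficiently smooth $f$, under which the compatibility failure itself refutes solvability. A minor further point: the purely Laplace-domain uniqueness argument tacitly assumes solutions are Laplace transformable, which is exactly the bookkeeping the canonical-form argument avoids.
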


See~\cite[Theorem~2.3.1]{Brenan1996} for $l=1$ and~\cite[Theorems~2.22--23]{Shi2004} for $l \geq 2$.
In this paper, we focus on solvable DAEs~\eqref{def:linear_dae2}.
With slight abuse of terminology, we also refer to equation~\eqref{eq:laplaced_dae2} as a DAE.

The differentiation index of the first-order linear DAE~\eqref{def:linear_dae2} with $A(s) = A_0 + sA_1$ is known to be
\begin{align} \label{def:nu_A}
  \nu(A) = \delta_{n-1}(A) - \degdet{A} + 1
\end{align}
as described in~\cite[Remark~5.1.10]{Murota2000}.
Here, $\delta_k(A)$ denotes the maximum degree of the determinant of a submatrix in $A(s)$ of size $k$, i.e.,
\begin{align} 
  \delta_k(A) = \max \set[\big]{\deg \det A(s)[I, J]}[\card{I} = \card{J} = k],
\end{align}
where $A(s)[I, J]$ is the submatrix in $A(s)$ with row set $I$ and column set $J$, and $\deg p(s)$ designates the degree of a polynomial $p(s)$ in $s$.
In particular, $\degdet{A}$ is the degree of the determinant of $A(s)$, and  $\delta_{n-1}(A)$ is the maximum degree of a cofactor of $A(s)$.
For a DAE~\eqref{def:linear_dae2} with $l \geq 2$, its index is defined to be that of the first order DAE obtained by replacing higher-order derivatives with new variables~\cite{Tan2017}.

\subsection{Assignment Problem}

In analysis of DAEs, Pryce~\cite{Pryce2001} introduced an assignment problem as a reinterpretation of Pantelides' algorithm~\cite{Pantelides1988}.
We describe it specializing to linear DAEs~\eqref{def:linear_dae2} using our notations.

Consider a linear DAE~\eqref{eq:laplaced_dae2} with $n \times n$ nonsingular polynomial matrix $A(s)$ with row set $R$ and column set $C$.
We denote the $\prn{i, j}$ entry of $A(s)$ by $A_{i,j}(s)$.
Let $G(A)$ denote the bipartite graph with vertex set $R \cup C$ and edge set $E(A) = \set{\prn{i, j} \in R \times C}[A_{i,j}(s) \neq 0]$.
An edge subset $M \subseteq E(A)$ is called a \emph{matching} if the ends of edges in $M$ are disjoint.
Since $A(s)$ is nonsingular, $G(A)$ has a matching of size $n$, called a \emph{perfect matching}.
We set the weight $c_e$ of an edge $e = \prn{i, j} \in E(A)$ by $c_e = c_{i,j} = \deg A_{i,j}(s)$.

The assignment problem on $G(A)$ is the following problem $\P{A}$:
\Maximize[name=$\P{A}$]{%
  \sum_{e \in M} c_e%
}{%
  \text{$M \subseteq E(F)$ is a perfect matching on $G(F)$.}%
}

The dual problem $\D{F}$ of $\P{F}$ is expressed as follows:
\Minimize[name=$\D{A}$]{%
  \sum_{j \in C} q_j - \sum_{i \in R} p_i%
}{%
  q_j - p_i \geq c_{i,j} & \prn{(i, j) \in E(F)}, \\
  p_i \in \setZ          & \prn{i \in R}, \\
  q_j \in \setZ          & \prn{j \in C}.
}

The integral constraints on $p_i$ and $q_j$ are crucial for analysis of DAEs.
We denote the optimal value of the problem $\P{A}$ (and $\D{A}$) by $\tdegdet{A}$.
Recall that $\degdet{A}$ denotes $\deg \det A(s)$. 
It is well-known that $\degdet{A} \leq \tdegdet{A}$ holds, and the equality is attained if and only if the coefficient of $s^{\tdegdet{A}}$ in $\det A(s)$ does not vanish; see~\cite[Theorem~6.2.2]{Murota2000}.
In this sense, $\tdegdet{A}$ serves as a combinatorial upper bound on $\degdet{A}$.
We call $A(s)$ \emph{upper-tight} if $\degdet{A} = \tdegdet{A}$ holds.

For a dual feasible solution $\prn{p, q}$, a \emph{tight coefficient matrix} $A^\#$ of $A(s)$ is defined by
\begin{align} \label{def:tcf}
  A_{i,j}^\# \defeq \text{the coefficient of $s^{q_j - p_i}$ in $A_{i,j}(s)$}
\end{align}
for each $i \in R$ and $j \in C$.
Note that $A^\#$ changes depending on $\prn{p, q}$.
This matrix is called a ``system Jacobian matrix'' by Pryce~\cite{Pryce2001}; the name ``tight coefficient matrix'' is due to Murota~\cite{Murota1995b}.

\subsection{Computing the Index via Combinatorial Relaxation}
\label{sec:computing_the_index_via_the_combonatorial_relaxation}

The tight coefficient matrix plays an important role in the combinatorial relaxation algorithm of Murota~\cite{Murota1995a} to compute $\delta_n(A)$ for a polynomial matrix $A(s)$ through the following lemma.

\begin{lemma}[{\cite[Proposition 6.2]{Murota1990}}] \label{lem:tightness1}
  Let $A(s)$ be a nonsingular polynomial matrix and let $A^\#$ be the tight coefficient matrix of $A(s)$ with respect to an optimal solution of $\D{A}$.
  Then $A(s)$ is upper-tight if and only if $A^\#$ is nonsingular.
\end{lemma}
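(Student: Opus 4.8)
The plan is to connect the leading coefficient of $\det A(s)$ with the determinant of the tight coefficient matrix $A^\#$ via the dual optimal solution $(p,q)$. First I would write, for any permutation $\sigma$ of $C$ realizing a term in the Leibniz expansion of $\det A(s)$, the inequality $\sum_{j} \deg A_{\sigma(j),j}(s) \le \sum_j (q_j - p_{\sigma(j)}) = \sum_{j\in C} q_j - \sum_{i\in R} p_i = \hat\delta_n(A)$, which uses dual feasibility $q_j - p_i \ge c_{i,j}$ on every edge $(i,j)\in E(A)$ together with the fact that the $p_i$ and $q_j$ cancel telescopically over a permutation. This re-derives $\delta_n(A) \le \hat\delta_n(A)$ and, more importantly, shows that the coefficient of $s^{\hat\delta_n(A)}$ in $\det A(s)$ receives contributions only from permutations $\sigma$ that are \emph{tight}, meaning $\deg A_{\sigma(j),j}(s) = q_j - p_{\sigma(j)}$ for every $j$; any non-tight permutation contributes a polynomial of strictly smaller degree.

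Next I would extract the relevant coefficient explicitly. The coefficient of $s^{\hat\delta_n(A)}$ in $\det A(s) = \sum_\sigma \operatorname{sgn}(\sigma) \prod_j A_{\sigma(j),j}(s)$ equals $\sum_\sigma \operatorname{sgn}(\sigma) \prod_j [\text{coeff. of } s^{q_j - p_{\sigma(j)}} \text{ in } A_{\sigma(j),j}(s)]$, where the sum is effectively over tight permutations only (the tightness forces the product's degree up to $\hat\delta_n(A)$, and non-tight terms drop out). But $[\text{coeff. of } s^{q_j - p_{\sigma(j)}} \text{ in } A_{\sigma(j),j}(s)]$ is precisely $A^\#_{\sigma(j),j}$ by the definition of the tight coefficient matrix. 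Hence this coefficient is exactly $\sum_\sigma \operatorname{sgn}(\sigma)\prod_j A^\#_{\sigma(j),j} = \det A^\#$. (One should note that a non-tight permutation $\sigma$ may still have $\prod_j A^\#_{\sigma(j),j} \neq 0$ if it happens to be tight edge-by-edge; but if $\sigma$ is tight on every edge in the sense above, its contribution to $\det A(s)$ genuinely has degree $\hat\delta_n(A)$, so it is already counted — the point is that the set of permutations with $\prod_j A^\#_{\sigma(j),j}\neq 0$ coincides with the set of tight permutations, and these are exactly the ones contributing to the top-degree coefficient.)

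Combining the two steps: $A(s)$ is upper-tight, i.e. $\delta_n(A) = \hat\delta_n(A)$, if and only if the coefficient of $s^{\hat\delta_n(A)}$ in $\det A(s)$ is nonzero (using that $\delta_n(A) \le \hat\delta_n(A)$ always holds, as recorded in the excerpt), which by the computation above happens if and only if $\det A^\# \neq 0$, i.e. $A^\#$ is nonsingular.

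The main obstacle I anticipate is being careful about which permutations contribute to the top-degree coefficient and matching that set exactly with the support of $\det A^\#$: one must verify that a permutation $\sigma$ with $\prod_j A^\#_{\sigma(j),j}\neq 0$ necessarily satisfies $A^\#_{\sigma(j),j}\neq 0$ for each $j$, hence $\deg A_{\sigma(j),j}(s)\ge q_j-p_{\sigma(j)}$, which together with dual feasibility forces equality and makes $\sigma$ tight — so the bijection between top-degree contributions and nonzero terms of $\det A^\#$ is clean. The telescoping cancellation of the potentials $(p,q)$ over a permutation is the only algebraic identity needed, and it is routine; the degree-counting bookkeeping is where care is required.
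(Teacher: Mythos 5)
Your argument is correct. Note that the paper itself gives no proof of this lemma---it is quoted from Murota [Proposition~6.2 of the cited 1990 paper]---so there is no internal proof to compare against; your Leibniz-expansion argument is essentially the standard proof of the cited result. Concretely: dual feasibility gives $\deg\prod_j A_{\sigma(j),j}(s)\le\sum_{j\in C}q_j-\sum_{i\in R}p_i$ for every permutation $\sigma$, optimality of $(p,q)$ identifies the right-hand side with $\hat{\delta}_n(A)$, and the only way a permutation's term can contribute in degree $\hat{\delta}_n(A)$ is by taking the coefficient of $s^{q_j-p_{\sigma(j)}}$ from every entry, so the coefficient of $s^{\hat{\delta}_n(A)}$ in $\det A(s)$ is exactly $\det A^\#$; combined with the fact (recorded in the paper) that upper-tightness is equivalent to this coefficient being nonzero, the lemma follows. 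The same computation is usually packaged as the factorization $A(s)=D_p^{-1}(s)\bigl(A^\#+A^\infty(s)\bigr)D_q(s)$, where $A^\infty(s)$ has entries that are polynomials in $s^{-1}$ with no constant term---precisely the expression the paper uses in~\eqref{eq:As_neg} and in the proof of Lemma~\ref{lem:tilde_A_optimality}---which yields $\det A(s)=s^{\hat{\delta}_n(A)}\bigl(\det A^\#+\mathrm{O}(s^{-1})\bigr)$ at once; your permutation-by-permutation bookkeeping is the expanded form of this identity, so the two routes are equivalent in substance. One small blemish: the parenthetical claim that ``a non-tight permutation $\sigma$ may still have $\prod_j A^\#_{\sigma(j),j}\neq 0$'' is wrong as stated---if $\sigma$ fails tightness at some $j$, then the coefficient of $s^{q_j-p_{\sigma(j)}}$ in $A_{\sigma(j),j}(s)$ is zero, i.e.\ $A^\#_{\sigma(j),j}=0$, so the product vanishes; your closing paragraph states the correct equivalence between tight permutations and the support of $\det A^\#$, so the argument stands, but that parenthetical should be deleted or rephrased.
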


The combinatorial relaxation method for computing $\delta_n(A)$ consists of the following three phases.

\begin{enumerate}[label={Phase \arabic*.}]
  \item Compute a combinatorial upper bound $\hat{\delta}_{n}(A)$ of $\delta_{n}(A)$ by solving an assignment problem.
  \item Check whether $A(s)$ is upper-tight using \cref{lem:tightness1}.
        If it is, return $\hat{\delta}_n(A)$ and halt.
  \item Modify $A(s)$ to improve $\hat{\delta}_{n}(A)$ by replacing $A(s)$ with $U(s)A(s)$, where $U(s)$ is a \emph{unimodular matrix}.
        Go back to Phase 2.
\end{enumerate}

Here, a unimodular matrix is a square polynomial matrix whose determinant is a nonzero constant.
The algorithm is designed so that $\tdegdet{A}$ decreases in each iteration, while unimodular transformations preserve $\degdet{A}$.
Thus, after a finite number of iterations, it terminates with $\tdegdet{A} = \degdet{A}$.

Subsequently, Murota~\cite{Murota1995b} applied the combinatorial relaxation approach to computing $\delta_k(A)$ for $k = 1, \ldots, n$.
In this algorithm, Phase~3 modifies $A(s)$ to $U(s)A(s)V(s)$, where $U(s)$ and $V(s)$ are \emph{biproper Laurent polynomial matrices}, i.e., entries are all polynomials in $1/s$ and the determinants are nonzero constants. 
This type of transformation is known to preserve $\delta_{k}(A)$.
The values of $\delta_{n-1}(A)$ and $\delta_n(A)$ determine the index $\nu(A)$ by~\eqref{def:nu_A}.

\subsection{Mattsson--S\"{o}derlind's Index Reduction Algorithm}
\label{sec:ms_alg}

We now review Mattsson--S\"{o}derlind's index reduction algorithm (MS-algorithm) applied to a linear DAE~\eqref{eq:laplaced_dae2} with $n \times n$ nonsingular polynomial matrix $A(s)$.
We remark that the MS-algorithm can be embedded in the $\Sigma$-method of Pryce~\cite{Pryce2001} and they are based on the same principle.

Let $\prn{p, q}$ be an optimal solution of $\D{A}$.
For $h \in \setZ$, we define
\begin{align}
  R_h \defeq \set{i \in R}[p_i = h], &\quad R_{\geq h} \defeq \set{i \in R}[p_i \geq h], \\
  C_h \defeq \set{j \in C}[q_j = h], &\quad C_{\geq h} \defeq \set{j \in C}[q_j \geq h].
\end{align}
The MS-algorithm applied to the DAE~\eqref{eq:laplaced_dae2} is outlined as follows.
The following description is a version specialized to linear DAEs, though the original MS-algorithm is designed for nonlinear DAEs~\cite[Section 3.1]{Mattsson1993}.

\paragraph{Mattsson--S\"{o}derlind's Index Reduction Algorithm}
\begin{enumerate}[label={Step \arabic*.}]
  \item Compute an optimal solution $\prn{p, q}$ of $\D{A}$ satisfying $p_i, q_j \geq 0$ for $i \in R$ and $j \in C$.
        Let $A^\#$ denote the tight coefficient matrix of $A(s)$ with respect to $\prn{p, q}$.
        If $A^\#$ is singular, then the algorithm terminates in failure.
  \item For each $h = 0, \ldots, \eta + 1 \,\, \prn[\big]{\eta \defeq \displaystyle \max_{i \in R} p_i}$, obtain $J_h \subseteq C_{\geq h}$ such that $A^\#[R_{\geq h}, J_h]$ is nonsingular and
        \begin{align}
          C = J_0 \supseteq J_1 \supseteq J_2 \supseteq \cdots \supseteq J_{\eta} \supseteq J_{\eta + 1} = \varnothing.
        \end{align}
  \item For each $j \in C$, let $k_j$ be the integer such that $j \in J_{k_j}$ and $j \notin J_{k_j + 1}$.
        Introduce $k_j$ dummy variables $z^{[q_j]}_j, z^{[q_j-1]}_j, \ldots, z^{[q_j-k_j+1]}_j$ corresponding to $s^{q_j}\tilde{x}_j, \, s^{q_j-1}\tilde{x}_j, \ldots, s^{q_j-k_j+1}\tilde{x}_j$, respectively.
  \item For each $i \in R$, return the 0-th, 1-st, ..., $p_i$-th order derivatives of the $i$-th equation.
        Replace variables with the corresponding dummy variables.
\end{enumerate}

The number of dummy variables introduced in the Step~3 is $\sum_{j \in C} q_j - \hat{\delta}_n(A) = \sum_{i \in R} p_i$, which is equal to the number of the differentiated equations in Step~4.
Since an optimal solution of $\D{A}$ is not unique, the number of dummy variables and equations are not uniquely determined by $A(s)$.
To minimize the the number of equations, Pryce~\cite{Pryce2001} uses the \emph{smallest} optimal solution $(p, q)$ of $\D{A}$, that is, $0 \leq p_i \leq p'_i$ and $0 \leq q_j \leq q'_j$ hold for all nonnegative optimal solution $(p', q')$ of $\D{A}$ and $i \in R$, $j \in C$.
An algorithm to obtain the small $(p, q)$ is known~\cite{Pryce2001}, but there is no guarantee of computational time.

The validity of the MS-algorithm is established as follows.

\begin{proposition}[{\cite[Section 3.2]{Mattsson1993}}] \label{prop:ms-alg}
  Let $A(s)$ be a polynomial matrix in the DAE~\eqref{eq:laplaced_dae2} and $A^\#$ the tight coefficient matrix of $A(s)$ with respect to an optimal solution of $\D{A}$.
  If $A^\#$ is nonsingular, then the MS-algorithm correctly returns an equivalent DAE with index at most one.
\end{proposition}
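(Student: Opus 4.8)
The plan is to pass to the Laplace domain and regard the output of the MS-algorithm as a linear DAE $\bar{A}(s)\,(\tilde{x},\tilde{z}) = \bar{f}(s)$ with an $N\times N$ polynomial matrix $\bar{A}(s)$, where $N=\sum_{i\in R}(p_i+1)$: its rows are indexed by pairs $(i,m)$ with $i\in R$ and $0\le m\le p_i$ (the $m$-th derivative of the $i$-th equation, Step~4), and its columns by the original variables $\tilde{x}_j$ ($j\in C$) together with the dummy variables $\tilde{z}^{[e]}_j$ of Step~3. Their number is $\sum_j k_j=\sum_{h\ge1}|J_h|=\sum_{h\ge1}|R_{\ge h}|=\sum_i p_i$, so $\bar{A}(s)$ is square. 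Since the $m$-th derivative of row $i$ contributes $s^m A_{i,j}(s)$ and $\deg A_{i,j}(s)\le q_j-p_i$, a short computation gives: in column $\tilde{x}_j$ every entry has degree at most $q_j-k_j$, this bound being attained with leading coefficient $A^\#_{i,j}$ in row $(i,p_i-k_j)$ whenever $A^\#_{i,j}\ne0$; while the entry of $\bar{A}(s)$ in row $(i,m)$ and a dummy column $\tilde{z}^{[e]}_j$ is the constant equal to the coefficient of $s^{e-m}$ in $A_{i,j}(s)$.

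The crux is a block-triangular structure. First, when $A^\#$ is nonsingular, the nested family $C=J_0\supseteq\cdots\supseteq J_{\eta+1}=\varnothing$ required in Step~2 exists with every $A^\#[R_{\ge h},J_h]$ nonsingular: build the $J_h$ greedily from $h=\eta+1$ downward, using that $A^\#[R_{\ge h},C]$ has full row rank (the rows $R_{\ge h}$ are independent in the nonsingular $A^\#$) to extend the independent column set $J_{h+1}$ to a column basis $J_h$ of the row space. Now assign to each row $(i,m)$ the \emph{level} $p_i-m$, and to each column the level $k_j$ for $\tilde{x}_j$ and $q_j-e$ for $\tilde{z}^{[e]}_j$; using $j\in J_h\Leftrightarrow k_j\ge h$, one checks that the level-$h$ rows and the level-$h$ columns are in bijection with $R_{\ge h}$ and $J_h$ respectively, so the level blocks are square. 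Let $L$ be the leading column-coefficient matrix of $\bar{A}(s)$ (the coefficients of $s^{q_j-k_j}$ in the $\tilde{x}_j$-columns and the constant entries in the $\tilde{z}$-columns). Ordering rows and columns by increasing level, $L$ becomes block upper triangular, because an entry of $L$ in (row level $h$, column level $h'$) vanishes unless $h\le h'$; and its $h$-th diagonal block equals, after reordering, $A^\#[R_{\ge h},J_h]$ — the matching entry being $A^\#_{i,j}$ in both the $\tilde{x}$- and the $\tilde{z}$-case. Hence $\det L=\pm\prod_h\det A^\#[R_{\ge h},J_h]\ne0$.

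Everything else follows from $\det L\ne0$. First, $\det\bar{A}(s)$ then has a nonzero term of degree equal to the sum of the column degrees of $\bar{A}(s)$, namely $\sum_j(q_j-k_j)=\sum_j q_j-\sum_i p_i=\hat{\delta}_n(A)$; so $\bar{A}(s)$ is nonsingular and the transformed DAE is solvable. Moreover $\deg\det\bar{A}(s)$ equals the sum of the column degrees, i.e.\ $\bar{A}(s)$ is column-reduced, which by a standard fact forces its first-order reduction to have differentiation index at most one (dividing column $j$ by $s^{q_j-k_j}$ turns $\bar{A}(s)$ into a biproper matrix, so the system decouples, up to a biproper transformation, into independent scalar chains, each of index at most one). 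For the equivalence: since $A^\#$ is nonsingular, Lemma~\ref{lem:tightness1} gives $\hat{\delta}_n(A)=\deg\det A(s)$, hence $\deg\det\bar{A}(s)=\deg\det A(s)$; therefore the homogeneous versions of the original and the transformed DAE have solution spaces of the same finite dimension $\deg\det A(s)$. The lift $\Lambda:x(t)\mapsto\bigl(x(t),(x^{(e)}_j(t))_{j,e}\bigr)$ carries every solution of the original DAE to a solution of the transformed one — differentiating equations and renaming higher derivatives as dummy variables is legitimate for genuine solutions — and it is injective, hence bijective by the dimension count. Consequently every solution $(x,z)$ of the transformed DAE is of the form $\Lambda(x)$: the dummy variables coincide with the derivatives they represent, and $x$ solves the original DAE. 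Together with the trivial converse this proves equivalence.

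I expect the main obstacle to be the level bookkeeping in the second paragraph — verifying that the level blocks are square, that the sub-diagonal blocks of $L$ vanish, and that the diagonal blocks are exactly $A^\#[R_{\ge h},J_h]$ — which hinges on carefully tracking the truncation $e\le q_j-k_j$ defining which derivatives become dummies, together with the index shifts from the $p_i$ successive differentiations. A secondary subtlety is the equivalence argument in the inhomogeneous case, where one must check that the initial-value terms carried over by the Laplace transform are matched correctly under $\Lambda$, so that consistent initial data for the two DAEs correspond.
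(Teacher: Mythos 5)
The paper itself offers no proof of Proposition~\ref{prop:ms-alg}: it is quoted from Mattsson--S\"oderlind, so your argument has to stand on its own, and in essence it does. The combinatorial core is correct: the greedy construction of the nested bases $J_h$ (using that the rows $R_{\geq h}$ of the nonsingular $A^\#$ are independent and that columns outside $C_{\geq h}$ vanish on those rows), the count $\sum_j k_j=\sum_{h\geq 1}\lvert J_h\rvert=\sum_i p_i$ making the output square, and the level bookkeeping for the leading column-coefficient matrix $L$. Both the vanishing of the sub-diagonal blocks and the identification of the $h$-th diagonal block with $A^\#[R_{\geq h},J_h]$ reduce, as you use, to the dual feasibility bound $\deg A_{i,j}\leq q_j-p_i$, and they check out; so $\det L=\pm\prod_h\det A^\#[R_{\geq h},J_h]\neq 0$ and $\deg\det\bar{A}=\sum_j(q_j-k_j)=\sum_j q_j-\sum_i p_i$, which equals $\deg\det A$ by Lemma~\ref{lem:tightness1}. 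This is close in spirit to Mattsson--S\"oderlind's own solvability argument (nonsingularity of the Jacobian with respect to the leading derivatives and dummies), while your equivalence step via the dimension count for the homogeneous solution spaces is a legitimate alternative to their direct back-substitution showing that the dummies coincide with the derivatives they replace; as you note, the initial-value terms carried by the Laplace transform need a final, routine check.

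The one step I would not accept as written is the parenthetical justification of ``column-reduced $\Rightarrow$ index at most one.'' Scaling column $j$ by $s^{-(q_j-k_j)}$ is not a time-domain operation, and biproperness by itself gives no decoupling into ``independent scalar chains,'' nor is the index obviously invariant under such column scalings. Fortunately you do not need any of that: you have already shown $\det L\neq 0$, so left-multiplying the output system by $L^{-1}$ expresses each leading unknown --- $x_j^{(q_j-k_j)}$ for the columns of positive degree, and the unknown itself for the degree-zero columns, in particular all dummies --- as a linear function of strictly lower-order derivatives and of the right-hand side; nothing of column degree zero survives on the right. The first-order reduction of this semi-explicit system is a pencil in which $s$ occurs only in the $\sum_j(q_j-k_j)$ state columns, so every minor of order one less has degree at most $\deg\det\bar{A}$, and~\eqref{def:nu_A} gives index at most one (index zero when no algebraic unknowns remain); since the index is invariant under the constant row operation $L^{-1}$, this applies to the MS output itself. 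With that replacement your proof is complete.
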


From \cref{lem:tightness1}, the condition in \cref{prop:ms-alg} is equivalent to the upper-tightness of $A(s)$ as follows.

\begin{corollary} \label{cor:MS_equivalent_validity_condition}
  Let $A(s)$ be a polynomial matrix in the DAE~\eqref{eq:laplaced_dae2}.
  If $A(s)$ is upper-tight, then the MS-algorithm correctly returns an equivalent DAE with index at most one. 
\end{corollary}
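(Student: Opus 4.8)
The plan is to derive Corollary~\ref{cor:MS_equivalent_validity_condition} as an immediate consequence of Proposition~\ref{prop:ms-alg} combined with Lemma~\ref{lem:tightness1}. The key observation is that the hypothesis of Proposition~\ref{prop:ms-alg} — namely, that the tight coefficient matrix $A^\#$ with respect to \emph{an} optimal solution of $\D{A}$ is nonsingular — is exactly the condition characterized by Lemma~\ref{lem:tightness1} as upper-tightness of $A(s)$. So the argument is essentially a translation between two equivalent formulations of the same condition, and no new computation is needed.

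First I would fix an optimal solution $\prn{p, q}$ of $\D{A}$. Since adding a constant to all $p_i$ and all $q_j$ simultaneously preserves both feasibility (the constraints $q_j - p_i \geq c_{i,j}$ are unchanged) and the objective value $\sum_j q_j - \sum_i p_i$ (the shifts cancel because $\card{R} = \card{C} = n$), we may without loss of generality assume $p_i \geq 0$ and $q_j \geq 0$ for all $i \in R$, $j \in C$; this is the form required in Step~1 of the MS-algorithm. Next, let $A^\#$ be the tight coefficient matrix of $A(s)$ with respect to this $\prn{p, q}$. By Lemma~\ref{lem:tightness1}, $A(s)$ is upper-tight if and only if $A^\#$ is nonsingular. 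Hence the assumption that $A(s)$ is upper-tight gives precisely that $A^\#$ is nonsingular, which is the hypothesis of Proposition~\ref{prop:ms-alg}. Applying that proposition, the MS-algorithm correctly returns an equivalent DAE of index at most one, which is the claim.

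There is essentially no obstacle here — the corollary is a restatement. The only point requiring a moment of care is that Lemma~\ref{lem:tightness1} is stated for the tight coefficient matrix with respect to \emph{an} optimal dual solution, and one should note that upper-tightness (a property of $A(s)$ alone) does not depend on which optimal $\prn{p,q}$ is chosen: whenever $A(s)$ is upper-tight, $A^\#$ is nonsingular for every optimal dual solution, and in particular for the nonnegative one selected in Step~1. Conversely, Proposition~\ref{prop:ms-alg} only needs nonsingularity of $A^\#$ for \emph{some} optimal solution, so the logical direction we use is the easy one. I would therefore present the proof in two sentences: shift $\prn{p,q}$ to be nonnegative, then invoke Lemma~\ref{lem:tightness1} and Proposition~\ref{prop:ms-alg} in succession.
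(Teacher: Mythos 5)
Your proposal is correct and follows essentially the same route as the paper, which simply observes that by Lemma~\ref{lem:tightness1} the nonsingularity hypothesis of Proposition~\ref{prop:ms-alg} is equivalent to upper-tightness of $A(s)$. Your extra remarks on shifting $\prn{p,q}$ to be nonnegative and on independence from the choice of optimal dual solution are fine but not needed beyond what the paper already implicitly assumes.
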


The description above is still valid for a nonlinear DAE~\eqref{def:dae} by redefining $A(s)$ as
\begin{align}
  \text{the coefficient of $s^k$ in $A_{i,j}(s)$} \defeq \text{the partial derivative of the $i$-th equation with respect to $x^{(k)}_j$}
\end{align}
for each $i = 1, \ldots, n$, $j = 1, \ldots, n$ and $k = 0, \ldots, l$.
Then the nonsingularity of $A^\#$ essentially comes from the requirement of the implicit function theorem, which is used to convert the DAE into an ODE by solving the DAE for the highest order derivatives.

\subsection{Index Reduction via Combinatorial Relaxation}
\label{sec:index_reduction_algorithm}

For a linear DAE~\eqref{eq:laplaced_dae2} that does not satisfy the validity condition of the MS-algorithm, we need to modify it to apply the MS-algorithm.
Here, the modification of DAEs must preserve the sets of their solutions.
We can use unimodular transformations in the form of
\begin{align} 
  U(s) A(s) \tilde{x}(s) = U(s)\hat{f}(s),
\end{align}
where $U(s)$ is a unimodular matrix.
Since unimodular transformations correspond to the operations of adding an equation or its (higher order) derivative to another equation, the DAEs before and after the transformation have the same solution set.

Murota's combinatorial relaxation algorithm~\cite{Murota1995a} for computing $\delta_n(A)$ described in \cref{sec:computing_the_index_via_the_combonatorial_relaxation} modifies a given polynomial matrix $A(s)$ into an upper-tight polynomial matrix $\bar{A}(s) = U(s)A(s)$ using some unimodular matrix $U(s)$.
Then from \cref{cor:MS_equivalent_validity_condition}, the matrix $\bar{A}(s)$ satisfies the validity condition of the MS-algorithm.
Therefore, we can use Murota's algorithm as an index reduction algorithm by combining it with the MS-algorithm.
Note that this modification may change (increase or decrease) $\delta_{n-1}(A)$, and hence $\nu(A)$.
This method indeed coincides with the LC-method of Tan et al.~\cite{Tan2017} applied to the linear DAEs with constant coefficients.

The idea of using the combinatorial relaxation method as a preprocessing of the MS-algorithm was originally given by Wu et al.~\cite{Wu2013} for first order linear DAEs with constant coefficients.
They proposed the WZC-algorithm that modifies a DAE using the combinatorial relaxation algorithm in~\cite{Iwata2003} for a \emph{matrix pencil} $A(s) = A_0 + sA_1$.
The algorithm in~\cite{Iwata2003} modifies the matrix pencil $A(s)$ to $UA(s)V$, where $U$ and $V$ are nonsingular constant matrices.
Since nonsingular constant matrices are biproper, the values of $\delta_{n-1}(A)$ and $\nu(A)$ do not change in the WZC-algorithm.

\section{DAEs with Mixed Matrices}
\label{sec:mixed_matrices}

The algorithms explained in \cref{sec:index_reduction_for_linear_DAEs} work under the assumption that we know all the values of physical quantities.
In order to treat them as parameters, we deal with a DAE with mixed matrices.

\subsection{Mixed Matrices and Mixed Polynomial Matrices}
\label{sec:mixed_matrices_and_mixed_polynomial_matrices}

Let $\setF$ be a field and $\setK$ a subfield of $\setF$.
A typical setting in the context of DAEs is $\setK = \setQ$ and $\setF$ is the extension field of $\setQ$ obtained by adjoining the set of independent physical parameters.
A matrix $T$ over $\setF$ is said to be \emph{generic} if the set of nonzero entries of $T$ is algebraically independent over $\setK$.
A \emph{mixed matrix} with respect to $\prn{\setK, \setF}$ is a matrix in the form of $Q+T$, where $Q$ is a matrix over $\setK$ and $T$ is a generic matrix.
A mixed matrix $A = Q + T$ is called a \emph{layered mixed matrix} (or \emph{LM-matrix}) if there exists a bipartition $\set{R_Q, R_T}$ of $\Row(A)$ such that all nonzero entries of $Q$ and $T$ are in rows $R_Q$ and $R_T$, respectively.
An LM-matrix $A$ can be expressed as $A = \binom{Q}{T}$.

A polynomial matrix $A(s) = \sum_{k=0}^l s^k A_k$ is called a \emph{mixed polynomial matrix} if it is expressed as $A_k = Q_k + T_k$ with $Q_k$ and $T_k$ that satisfy the following conditions:
\begin{enumerate}[label={(MP-Q)}]
  \item[(MP-Q)] Each $Q_k$ $\prn{k = 0, \ldots, l}$ is a matrix over $\setK$.
  \item[(MP-T)] The set of nonzero entries of $T_0, \ldots, T_l$ is algebraically independent over $\setK$.
\end{enumerate}
A \emph{layered mixed polynomial matrix} (or \emph{LM-polynomial matrix}) is a mixed polynomial matrix such that nonzero rows of $Q(s) = \sum_{k=0}^l s^k Q_k$ and $T(s) = \sum_{k=0}^l s^k T_k$ are disjoint.
An LM-polynomial matrix is expressed as $A(s) = \binom{Q(s)}{T(s)}$.

\begin{figure}[tbp]
  \centering
  \begin{circuitikz}
    \draw (0,-2)
      to[short]  (0,-1)
      to[sV] (0,1)
      to[short, i=$\xi_5$] (0,2)
      to[short]  (2,2)
      to[resistor=$R_1$, i=$\xi_1$, *-*] (2,0)
      to[resistor=$R_2$, i=$\xi_2$, -*] (4,0);
    \draw[-{Triangle[angle=50:1.9mm]}] (-0.7, -0.5) -- (-0.7, 0.5) node[left, midway] {$\eta_5 = V(t)$};
    \draw (-0.9, -0.5) -- (-0.5, -0.5) {};
    \draw (-0.9, 0.5) -- (-0.5, 0.5) {};
    \draw[-{Triangle[angle=50:1.9mm]}] (1.6, 0.4) -- (1.6, 1.6) node[left, midway] {$\eta_1$};
    \draw (1.4, 0.4) -- (1.8, 0.4) {};
    \draw (1.4, 1.6) -- (1.8, 1.6) {};
    \draw[-{Triangle[angle=50:1.9mm]}] (3.6, -0.4) -- (2.4, -0.4) node[below, midway] {$\eta_2$};
    \draw (3.6, -0.2) -- (3.6, -0.6) {};
    \draw (2.4, -0.2) -- (2.4, -0.6) {};
    \draw (2,2)
      to[short] (4,2)
      to[C, l_=$C$, i=$\xi_4$] (4,0)
      to[short] (4,-2)
      to[short] (2,-2);
    \draw[-{Triangle[angle=50:1.9mm]}] (4.6, 0.6) -- (4.6, 1.4) node[right, midway] {$\eta_4$};
    \draw (4.4, 0.6) -- (4.8, 0.6) {};
    \draw (4.4, 1.4) -- (4.8, 1.4) {};
    \draw (2,0)
      to[L=$L$, i=$\xi_3$, -*] (2,-2)
      to[short] (0,-2);
    \draw[-{Triangle[angle=50:1.9mm]}] (1.6, -1.5) -- (1.6, -0.5) node[left, midway] {$\eta_3$};
    \draw (1.4, -1.5) -- (1.8, -1.5) {};
    \draw (1.4, -0.5) -- (1.8, -0.5) {};
  \end{circuitikz}
  \caption{%
    Simple RLC network.
  }
  \label{fig:RLC}
\end{figure}

\begin{example}
  Consider an electrical network illustrated in \cref{fig:RLC}, given in~\cite[Section 1.1]{Murota2000}.
  The network consists of a voltage source of time-varying voltage $V(t)$, two resistances $R_1$ and $R_2$, an inductor $L$ and a capacitor $C$.
  State variables of this network is currents $\xi_1, \ldots, \xi_5$ and voltages $\eta_1, \ldots, \eta_5$ shown in \cref{fig:RLC}.
  The Laplace transform of an index-2 DAE representing this network is given by
  \begin{align} \label{eq:example2_eq}
    A(s) \tilde{x}(s)
    =
    \prn{\begin{array}{rrrrr|rrrrr}
    -1  &     &     & -1 &  1 &    &    &    &    &    \\
        &   1 &   1 &  1 & -1 &    &    &    &    &    \\
    \hline
        &     &     &    &    &  1 &    &  1 &    & -1 \\
        &     &     &    &    & -1 & -1 &    &  1 &    \\
        &     &     &    &    &    &  1 & -1 &    &    \\
    \hline
    R_1 &     &     &    &    & -1 &    &    &    &    \\
        & R_2 &     &    &    &    & -1 &    &    &    \\
        &     &  sL &    &    &    &    & -1 &    &    \\
        &     &     & -1 &    &    &    &    & sC &    \\
        &     &     &    &    &    &    &    &    &  1  
    \end{array}}
    \begin{pmatrix}
    \tilde{\xi}_1(s) \\
    \tilde{\xi}_2(s) \\
    \tilde{\xi}_3(s) \\
    \tilde{\xi}_4(s) \\
    \tilde{\xi}_5(s) \\
    \tilde{\eta}_1(s) \\
    \tilde{\eta}_2(s) \\
    \tilde{\eta}_3(s) \\
    \tilde{\eta}_4(s) \\
    \tilde{\eta}_5(s)
    \end{pmatrix}
    =
    \begin{pmatrix}
    0 \\
    0 \\
    0 \\
    0 \\
    0 \\
    0 \\
    0 \\
    0 \\
    0 \\
    \tilde{V}(s)
    \end{pmatrix},
  \end{align}
  where empty cells in the coefficient matrix $A(s)$ indicate zero.
  Here, $\tilde{x} = \prn[\big]{\tilde{\xi}_1, \ldots, \tilde{\xi}_5,\tilde{\eta}_1, \ldots, \tilde{\eta}_5}^\top$ is the Laplace transform of the vector $\prn[\big]{\xi_1, \ldots, \xi_5, \eta_1, \ldots, \eta_5}^\top$ of variables and $\tilde{V}(s)$ is the Laplace transform of $V(t)$ (we assumed that all state variables and their derivatives were equal to zero at $t = 0$ for simplicity).
  In this system~\eqref{eq:example2_eq}, the first two equations come from Kirchhoff's current law (KCL), and the following three equations come from Kirchhoff's voltage law (KVL).
  The last five equations represent the element characteristics (constitutive equations).
  The coefficient matrix in~\eqref{eq:example2_eq} is naturally regarded as a mixed polynomial matrix with independent parameters $R_1$, $R_2$, $L$ and $C$ since values of the parameters are supposed to be inaccurate.
\end{example}

\subsection{Rank of LM-matrices}
For a matrix $A$, we denote the row set and column set by $\Row(A)$ and $\Col(A)$, respectively.
Consider the associated bipartite graph $G(A) = \prn{R, C; E(A)}$, where $R = \Row(A)$ and $C = \Col(A)$.
The \emph{term-rank} of $A$ is the maximum size of a matching in $G(A)$, and is denoted by $\trank A$.
It is well known that $\rank A \leq \trank A$ holds.
The equality is attained if and only if $A$ has a submatrix of size $\trank A$ with nonzero determinant.
This is analogous to the relation between $\delta_n$ and $\hat{\delta}_n$ for a polynomial matrix.

Let $A = \binom{Q}{T}$ be an LM-matrix.
If $A$ has no accurate constants, i.e., $A$ is a generic matrix $T$, it holds that $\rank T = \trank T$ from the independence of nonzero entries.
From this equality, we can compute $\rank T$ by solving a maximum matching problem on the associated bipartite graph $G(T)$.
For general LM-matrices, the following holds from the generalized Laplace expansion.
\begin{proposition}[{\cite[Theorem 3.1]{Murota1987}}]
  For an LM-matrix $A = \binom{Q}{T}$ with $R_Q = \Row(Q)$, $R_T = \Row(T)$ and $C = \Col(A)$, the following rank identity holds:
  \begin{align} 
    \label{eq:rank_identity_max} \rank A &= \max \set{\rank Q[R_Q, J] + \trank T[R_T, C \setminus J]}[J \subseteq C].
  \end{align}
\end{proposition}

The problem of maximizing the right-hand side of~\eqref{eq:rank_identity_max} can be reduced to an \emph{independent matching problem} on a matroid; see~\cite[Section~4.2]{Murota2000} for details.
The following identity is obtained from the duality of the independent matching problem.
\begin{proposition}[{\cite[Theorem 3.1]{Murota1987}}]
  For an LM-matrix $A = \binom{Q}{T}$ with $R_Q = \Row(Q)$, $R_T = \Row(T)$ and $C = \Col(A)$, the following rank identity holds:
  \begin{align} 
    \label{eq:rank_identity_min} \rank A &= \min \set{\rank Q[R_Q, J] + \trank T[R_T, J] + \card{C \setminus J}}[J \subseteq C].
  \end{align}
\end{proposition}

Similarly, we give the following term-rank identity for LM-matrices, which will be used later in the proof of \cref{lem:tilde_A_optimality}.

\begin{proposition} \label{prop:trank_identity_min}
  For an LM-matrix $A = \binom{Q}{T}$ with $R_Q = \Row(Q)$, $R_T = \Row(T)$ and $C = \Col(A)$, the following term-rank identity holds:
  \begin{align} 
    \trank A &= \min \set{\trank Q[R_Q, J] + \trank T[R_T, J] + \card{C \setminus J}}[J \subseteq C].
  \end{align}
\end{proposition}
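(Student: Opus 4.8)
The plan is to reduce this term-rank identity to the already-established rank identity~\eqref{eq:rank_identity_min} by a generic perturbation of the constant part $Q$. Extend the field $\setF$ to $\setF'$ by adjoining algebraically independent transcendentals $\tilde{Q}_{i,j}$, one for each pair $\prn{i,j}$ with $Q_{i,j} \neq 0$, and let $\tilde{Q}$ be the matrix over $\setF'$ with these entries in the corresponding positions and zeros elsewhere; thus $\tilde{Q}$ has the same zero/nonzero pattern as $Q$. Put $\setK' \defeq \setK\prn{\set{\tilde{Q}_{i,j}}[Q_{i,j} \neq 0]}$. Since the nonzero entries of $T$ are algebraically independent over $\setK$ and the $\tilde{Q}_{i,j}$ are algebraically independent over $\setF \supseteq \setK$, all of these quantities are jointly algebraically independent over $\setK$; in particular the nonzero entries of $T$ remain algebraically independent over $\setK'$. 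Hence $\binom{\tilde{Q}}{T}$ is an LM-matrix with respect to $\prn{\setK', \setF'}$, with the same row partition $R_Q \cup R_T$ and the same column set $C$ as $A$.

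Three simple observations then finish the argument. First, $\binom{\tilde{Q}}{T}$ and $A = \binom{Q}{T}$ induce the same bipartite graph, and the same holds for the submatrices indexed by $\prn{R_Q, J}$ and by $\prn{R_T, J}$ for every $J \subseteq C$; consequently $\trank \binom{\tilde{Q}}{T} = \trank A$ and $\trank \tilde{Q}[R_Q, J] = \trank Q[R_Q, J]$. Second, $\binom{\tilde{Q}}{T}$ and each $\tilde{Q}[R_Q, J]$ are generic matrices, so their ranks coincide with their term-ranks. Third, applying the rank identity~\eqref{eq:rank_identity_min} to the LM-matrix $\binom{\tilde{Q}}{T}$ and substituting, we obtain
\begin{align*}
  \trank A
  &= \rank \binom{\tilde{Q}}{T} \\
  &= \min \set{\rank \tilde{Q}[R_Q, J] + \trank T[R_T, J] + \card{C \setminus J}}[J \subseteq C] \\
  &= \min \set{\trank Q[R_Q, J] + \trank T[R_T, J] + \card{C \setminus J}}[J \subseteq C],
\end{align*}
which is the desired identity.

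The only point requiring care is the field bookkeeping above: one must keep $\tilde{Q}$ defined over the new ground field while preserving the genericity of $T$ over it, which is exactly why the $\tilde{Q}_{i,j}$ are taken to be transcendental over $\setF$ (equivalently, over $\setK$ together with the entries of $T$) rather than merely over $\setK$. If one instead prefers a proof independent of~\eqref{eq:rank_identity_min}, the inequality $\trank A \leq \trank Q[R_Q, J] + \trank T[R_T, J] + \card{C \setminus J}$ holds for every $J$ because a maximum matching of $\app{G}{A}$ splits into a matching in $\app{G}{Q}$ and a matching in $\app{G}{T}$ using disjoint columns, of which the edges meeting $J$ contribute at most $\trank Q[R_Q, J] + \trank T[R_T, J]$ and the remaining edges at most $\card{C \setminus J}$; the reverse inequality follows from the matroid union theorem~\cite{Murota2000} applied to the two transversal matroids on $C$ associated with $\app{G}{Q}$ and $\app{G}{T}$, whose union has rank equal to $\trank A$.
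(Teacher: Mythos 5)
Your argument is correct, but it takes a genuinely different route from the paper. The paper proves Proposition~\ref{prop:trank_identity_min} in one line by matroid union: the two transversal matroids on $C$ induced by $\app{G}{Q}$ and $\app{G}{T}$ have union equal to the transversal matroid of $\app{G}{A}$ (the union of transversal matroids is transversal), and Edmonds' rank formula for the union matroid is exactly the stated min-formula --- this is precisely the alternative you sketch in your closing remark. Your main proof instead replaces $Q$ by a generic matrix $\tilde{Q}$ with the same support over an enlarged field pair $\prn{\setK', \setF'}$, observes that term-ranks are pattern-invariant and that $\rank = \trank$ for generic matrices, and then invokes the LM rank identity~\eqref{eq:rank_identity_min} for $\binom{\tilde{Q}}{T}$; the field bookkeeping (joint algebraic independence of the new transcendentals with the entries of $T$ over $\setK$, hence genericity of $T$ over $\setK'$) is handled correctly, and the identity~\eqref{eq:rank_identity_min} indeed holds for LM-matrices over any field pair, so the substitution is legitimate. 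What your route buys is self-containedness relative to the results already quoted in the paper: no appeal to Edmonds' union theorem or to closure of transversal matroids under union, only to~\eqref{eq:rank_identity_min} and the elementary fact that generic matrices have $\rank = \trank$. What the paper's route buys is brevity and a purely combinatorial statement with no auxiliary field extension, which also makes transparent that the identity is really a fact about bipartite graphs/transversal matroids rather than about any particular realization of $Q$.
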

\begin{proof}
  This immediately follows from the well-known rank formula of a union matroid~\cite{Edmonds1968} and the fact that the union of transversal matroids is also a transversal matroid~\cite[Corollary~11.3.8]{Oxley2011}.
\end{proof}

\subsection{Combinatorial Relaxation Algorithm for Mixed Polynomial Matrices}
\label{ref:combinatorial_relaxation_algorithm_for_mixed_polynomial_matrices}

Murota~\cite{Murota1998} described the first algorithm to compute $\delta_k$ of a mixed polynomial matrix through a reduction to a \emph{valuated independent assignment problem}.
The valuated independent assignment problem is an optimization problem on \emph{valuated matroids}, which are a generalization of matroids.
The mixed matrices concept and the combinatorial relaxation were first combined in~\cite{Iwata2001}.
The algorithm in~\cite{Iwata2001} computes $\delta_k$ of a (usual) polynomial matrix $A(s)$ obtained by plugging in specific values for independent parameters in a mixed polynomial matrix.
Based on the framework of combinatorial relaxation, this algorithm iteratively computes $\delta_k$ of mixed polynomial matrices using~\cite{Murota1998} as a combinatorial upper bound on $\delta_k(A)$.
Subsequently, \cite{Iwata2013} proposed a combinatorial relaxation algorithm for computing $\delta_k$ of mixed polynomial matrices without using valuated matroid theory.
Sato~\cite{Sato2015} presented a fast algorithm to compute the entire sequence $\delta_1, \ldots, \delta_n$ of mixed polynomial matrices extending the algorithm in~\cite{Iwata2013}.

The algorithm in~\cite{Iwata2013} first converts a mixed polynomial matrix into an LM-polynomial matrix $A(s) = \binom{Q(s)}{T(s)}$, and modifies $A(s)$ to
\begin{align}
  \bar{A}(s) =
  \begin{pmatrix} U_Q(s) & O \\ O & I \end{pmatrix}
  \binom{Q(s)}{T(s)},
\end{align}
where $I$ is an identity matrix of appropriate size, and $U_Q(s)$ is a nonsingular \emph{Laurent polynomial matrix}.
Here, a \emph{Laurent polynomial matrix} is a matrix whose entries are polynomials in $s$ and $s^{-1}$.
With the use of~\eqref{def:nu_A}, we can obtain the index $\nu(A)$ by computing $\delta_n(A)$ and $\delta_{n-1}(A)$. 

In order to devise an index reduction algorithm for DAEs with mixed matrices, we need to make use of unimodular transformations instead of Laurent polynomial transformations, as explained in \cref{sec:index_reduction_algorithm}.

\section[%
  Combinatorial Relaxation Algorithm for Index Reduction with Mixed Polynomial Matrices%
]{%
  Combinatorial Relaxation Algorithm for Index Reduction\\%
  with Mixed Polynomial Matrices%
}
\label{sec:algorithm}

This section presents our index reduction algorithm for a DAE
\begin{align} \label{eq:laplaced_mixed_dae}
  A(s) \tilde{x}(s) = \hat{f}(s)
\end{align}
with a nonsingular mixed polynomial matrix $A(s)$, which is the Laplace transform of the DAE~\eqref{def:linear_dae}.
From \cref{cor:MS_equivalent_validity_condition}, our goal is to find a unimodular matrix $U(s)$ such that $\bar{A}(s) = U(s) A(s)$ is upper-tight.
Then applying the MS-algorithm to the DAE $U(s) A(s) \tilde{x}(s) = U(s) \hat{f}(s)$, we obtain a resultant low-index DAE.

We cannot perform row operations on $A(s)$ involving rows containing independent parameters.
Our first step is to convert a given DAE~\eqref{eq:laplaced_mixed_dae} into another DAE whose coefficient matrix $A(s)$ is an LM-polynomial matrix expressed as $A(s) = \binom{Q(s)}{T(s)}$.
Then we can transform $A(s)$ to
\begin{align} \label{eq:update_A}
  \bar{A}(s)
  =
  \begin{pmatrix} U_Q(s) & O \\ O & I \end{pmatrix}
  \binom{Q(s)}{T(s)},
\end{align}
where $U_Q(s)$ is a unimodular matrix.
Note that we are allowed to perform row operations only on $Q(s)$ even for an LM-polynomial matrix $A(s) = \binom{Q(s)}{T(s)}$, and thus we cannot always reduce the index to one only by row operations on $Q(s)$.
We describe this conversion process from mixed polynomial matrices into LM-polynomial matrices in \cref{sec:mixed_to_LM}.

After the conversion, we find a unimodular matrix $U_Q(s)$ in~\eqref{eq:update_A} such that $A(s)$ is upper-tight based on the combinatorial relaxation approach.
The outline of our algorithm is as follows.

\paragraph{Algorithm for Tightness}
\begin{enumerate}[label={Phase \arabic*.}]
  \item
    Construct an optimal solution $\prn{p, q}$ of $\D{A}$ satisfying $0 \leq p_i \leq ln$ and $0 \leq q_j \leq ln$ for all $i \in R$ and $j \in C$, where $l$ is the maximum degree of an entry in $A(s)$.
  \item
    If the tight coefficient matrix $A^\#$ with respect to $\prn{p, q}$ is nonsingular, then return $A(s)$ and halt.
  \item
    Modify $A(s)$ into $\bar{A}(s)$ such that $\tdegdet{\bar{A}} \leq \tdegdet{A} - 1$ and $\degdet{A} = \degdet{\bar{A}}$.
    Update $\prn{p, q}$ to an optimal solution of $\D{\bar{A}}$, and go back to Phase~2.
\end{enumerate}

The bounds on $p_i$ and $q_j$ in Phase~1 are needed to bound the time complexity of our algorithm.
An algorithm to find such $(p, q)$ is given in~\cite{Iwata2018a} for $l = 1$, and we give an algorithm for general $l$ in \Cref{sec:construct_dual_opt}.
The condition in Phase~2, which is equivalent to the upper-tightness of $A(s)$ by \cref{lem:tightness1}, can be checked by solving an independent matching problem~\cite{Murota1987}.
The matrix modification and an update procedure of $\prn{p, q}$ in Phase 3 are explained in \cref{sec:matrix_modification,sec:dual_updates}, respectively.
In \cref{sec:complexity_analysis}, we analyze the time complexity of our algorithm.

\subsection{Reduction to LM-polynomial Matrices}
\label{sec:mixed_to_LM}

We first convert the DAE~\eqref{eq:laplaced_mixed_dae} with a mixed polynomial coefficient matrix $A(s) = Q(s) + T(s)$ into the following augmented DAE
\begin{align} \label{eq:LM_dae}
  \begin{pmatrix}
     I &  Q(s) \\
    -D & DT(s)
  \end{pmatrix}
  \begin{pmatrix}
    \tilde{y}(s) \\
    \tilde{z}(s)
  \end{pmatrix}
  =
  \begin{pmatrix}
    \hat{f}(s) \\
    0
  \end{pmatrix},
\end{align}
where $D$ is a diagonal matrix whose diagonal entries are independent parameters $\tau_1, \ldots, \tau_n$.
Note that the coefficient matrix of the augmented DAE~\eqref{eq:LM_dae} is an LM-polynomial matrix as the set of nonzero coefficients of entries in $-D$ and $DT(s)$ is algebraically independent over $\setK$.

\begin{proposition}
  Let $\binom{\tilde{y}(s)}{\tilde{z}(s)}$ be a solution of the DAE~\eqref{eq:LM_dae}.
  Then $\tilde{z}(s)$ is a solution of the DAE~\eqref{eq:laplaced_mixed_dae}.
\end{proposition}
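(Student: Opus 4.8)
The plan is to read off the claim directly from the block structure of the augmented system~\eqref{eq:LM_dae}. Write the coefficient matrix in the two-row-block, two-column-block form: the first block row is $\begin{pmatrix} I & Q(s) \end{pmatrix}$ and the second is $\begin{pmatrix} -D & DT(s) \end{pmatrix}$, with unknowns $\tilde{y}(s)$ over the first column block and $\tilde{z}(s)$ over the second. First I would expand the matrix equation into its two constituent vector equations, namely $\tilde{y}(s) + Q(s)\tilde{z}(s) = \hat{f}(s)$ from the first block row, and $-D\tilde{y}(s) + DT(s)\tilde{z}(s) = 0$ from the second block row.

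Next I would use the first equation to solve for $\tilde{y}(s)$, obtaining $\tilde{y}(s) = \hat{f}(s) - Q(s)\tilde{z}(s)$, and substitute this into the second equation. This gives $-D\bigl(\hat{f}(s) - Q(s)\tilde{z}(s)\bigr) + DT(s)\tilde{z}(s) = 0$, i.e.\ $D\bigl(Q(s) + T(s)\bigr)\tilde{z}(s) = D\hat{f}(s)$. Since $D = \diag(\tau_1, \ldots, \tau_n)$ has nonzero (independent-parameter) diagonal entries, it is invertible, so multiplying on the left by $D^{-1}$ yields $\bigl(Q(s) + T(s)\bigr)\tilde{z}(s) = \hat{f}(s)$, which is exactly $A(s)\tilde{z}(s) = \hat{f}(s)$ — the DAE~\eqref{eq:laplaced_mixed_dae}. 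Hence $\tilde{z}(s)$ is a solution of~\eqref{eq:laplaced_mixed_dae}, as desired.

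The argument is essentially a block Gaussian elimination, and there is no real obstacle. The one point that warrants a remark is the invertibility of $D$: the diagonal entries $\tau_1, \ldots, \tau_n$ are independent parameters, hence nonzero, so $D^{-1}$ exists; this is why the conversion is valid. (Strictly, one should note that the introduced variables $\tilde{y}(s)$ are auxiliary: the purpose of the factor $D$ on the second block row is only to make the whole coefficient matrix an LM-polynomial matrix, and it plays no role beyond being cancelled here.) It may also be worth observing, for completeness, that the converse holds as well — given any solution $\tilde{z}(s)$ of~\eqref{eq:laplaced_mixed_dae}, setting $\tilde{y}(s) := \hat{f}(s) - Q(s)\tilde{z}(s)$ produces a solution $\binom{\tilde{y}(s)}{\tilde{z}(s)}$ of~\eqref{eq:LM_dae}, so the two DAEs have ``the same'' solution set in the variable $\tilde{z}$; but only the stated direction is needed here.
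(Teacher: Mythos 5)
Your proposal is correct and is essentially the same argument as the paper's: the paper performs your substitution in one step by left-multiplying the augmented system by the nonsingular constant matrix $\begin{pmatrix} I & O \\ I & D^{-1} \end{pmatrix}$, turning the second block row into $A(s)\tilde{z}(s) = \hat{f}(s)$, which is exactly your block elimination using the invertibility of $D$. No gaps; the remark on $D^{-1}$ existing matches the paper's implicit use of it.
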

\begin{proof}
  By left-multiplying both sides of~\eqref{eq:LM_dae} by a nonsingular constant matrix $\begin{pmatrix} I & O \\ I & D^{-1} \end{pmatrix}$, we obtain
  \begin{align}
    \begin{pmatrix}
      I & Q(s) \\
      O & A(s)
    \end{pmatrix}
    \begin{pmatrix}
      \tilde{y}(s) \\
      \tilde{z}(s)
    \end{pmatrix}
    =
    \begin{pmatrix}
      \hat{f}(s) \\
      \hat{f}(s)
    \end{pmatrix},
  \end{align}
  where $O$ is a zero matrix.
  Thus it holds $A(s) \tilde{z}(s) = \hat{f}(s)$, which implies that $\tilde{z}(s)$ is a solution of the DAE~\eqref{eq:laplaced_mixed_dae}.
\end{proof}

After the index reduction process, we need to fill independent parameters by real numbers to start a numerical method.
Indeed, we can substitute 1 for each diagonal entry $\tau_i$ of $D$, i.e., $D=I$.
To explain this fact, let 
\begin{align} \label{def:B}
  B(s) = \begin{pmatrix} Q_1(s) & Q_2(s) \\ -D & DT(s) \end{pmatrix}
\end{align}
be the coefficient matrix of a DAE that our algorithm returns for the augmented DAE~\eqref{eq:LM_dae}, where $Q_1(s)$ and $Q_2(s)$ are some polynomial matrices.
By substituting the identity matrix to $D$, we obtain
\begin{align} \label{def:tilde_B}
  \bar{B}(s) = \begin{pmatrix} Q_1(s) & Q_2(s) \\ -I & T(s) \end{pmatrix}.
\end{align}
Though $\bar{B}(s)$ is no longer an LM-polynomial matrix, the following lemma guarantees the upper-tightness of $\bar{B}(s)$.
\begin{lemma} \label{lem:B}
  Let $Q_1(s)$, $Q_2(s)$ and $T(s)$ be polynomial matrices and let $D$ be a nonsingular diagonal matrix.
  Then $B(s)$ in~\eqref{def:B} is upper-tight if and only if $\bar{B}(s)$ in~\eqref{def:tilde_B} is upper-tight.
\end{lemma}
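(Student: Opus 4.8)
The plan is to relate $B(s)$ and $\bar B(s)$ by a diagonal scaling of the rows and to track how this scaling affects both $\degdet{\cdot}$ and $\tdegdet{\cdot}$. Write $B(s)$ in block form with row blocks $R_1$ (the first block of rows, carrying $Q_1(s)\mid Q_2(s)$) and $R_2$ (the rows carrying $-D\mid DT(s)$). Left-multiplying $B(s)$ by the nonsingular constant diagonal matrix $\mathrm{diag}(I, D^{-1})$ replaces the block $(-D, DT(s))$ by $(-I, T(s))$, so $\bar B(s) = \mathrm{diag}(I, D^{-1}) B(s)$. Since $D$ is a constant (degree-$0$) nonsingular diagonal matrix, this is a transformation by a constant invertible matrix.

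First I would show that $\degdet{B} = \degdet{\bar B}$. This is immediate: $\det \bar B(s) = \det(D^{-1}) \det B(s)$ and $\det(D^{-1})$ is a nonzero constant, so the two determinants have the same degree (in fact differ only by a nonzero scalar). Next I would show that $\tdegdet{B} = \tdegdet{\bar B}$. The key point is that multiplying row $i$ by a nonzero constant does not change the degree $c_{i,j} = \deg B_{i,j}(s)$ of any entry — a constant factor does not alter the degree of a nonzero polynomial, and zero entries stay zero. Hence $B(s)$ and $\bar B(s)$ have exactly the same associated bipartite graph $G(\cdot)$ with exactly the same edge weights $c_{i,j}$, so the assignment problems $\P{B}$ and $\P{\bar B}$ are literally identical, giving $\tdegdet{B} = \tdegdet{\bar B}$. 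Combining the two equalities, $\degdet{B} - \tdegdet{B} = \degdet{\bar B} - \tdegdet{\bar B}$, and since upper-tightness of a polynomial matrix $M(s)$ is precisely the condition $\degdet{M} = \tdegdet{M}$ (equivalently $\degdet{M} - \tdegdet{M} = 0$, recalling $\degdet{M} \le \tdegdet{M}$ always holds), it follows that $B(s)$ is upper-tight if and only if $\bar B(s)$ is.

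I do not anticipate a serious obstacle here; the only thing to be careful about is that the scaling matrix acts only on the second row block, so one should confirm that the entries in $R_1$ are genuinely unchanged (they are, since the top-left block of $\mathrm{diag}(I,D^{-1})$ is the identity) and that no cancellation among entries is introduced — but since we only rescale whole rows by nonzero constants, the sparsity pattern and all entrywise degrees in both row blocks are preserved verbatim. One could equivalently phrase the whole argument via a dual solution of $\D{B}$: $(p,q)$ is feasible (resp.\ optimal) for $\D{B}$ if and only if it is feasible (resp.\ optimal) for $\D{\bar B}$, because the constraints $q_j - p_i \ge c_{i,j}$ are the same; and the tight coefficient matrices satisfy $\bar B^\# = \mathrm{diag}(I, D^{-1}) B^\#$, so $\bar B^\#$ is nonsingular if and only if $B^\#$ is, which by Lemma~\ref{lem:tightness1} again yields the claimed equivalence of upper-tightness.
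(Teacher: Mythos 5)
Your argument is correct and follows essentially the same route as the paper: both write $\bar{B}(s) = \begin{pmatrix} I & O \\ O & D^{-1} \end{pmatrix} B(s)$, observe that this constant nonsingular diagonal transformation preserves $\degdet{\cdot}$ and leaves the bipartite graph and edge weights (hence $\tdegdet{\cdot}$) unchanged, and conclude that upper-tightness is equivalent. The extra remark about dual solutions and tight coefficient matrices is consistent but not needed.
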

\begin{proof}
  Using $P = \begin{pmatrix} I & O \\ O & D^{-1} \end{pmatrix}$, we have $\bar{B}(s) = P B(s)$.
  Since $P$ is a nonsingular constant matrix, $\degdet{B} = \degdet{\bar{B}}$ holds.
  In addition, since $P$ is nonsingular, diagonal and constant, the row transformation by $P$ does not change the bipartite graph $G(B)$ and its edge weight $c_e$ associated with $B(s)$.
  This fact implies that $\tdegdet{B} = \tdegdet{\bar{B}}$.
  Thus the upper-tightness of $B(s)$ and $\bar{B}(s)$ are equivalent.
\end{proof}
From this lemma, we can ``forget'' the existence of $D$ in the augmented DAE~\eqref{eq:LM_dae}.
That is, to reduce the index of the DAE~\eqref{eq:laplaced_mixed_dae}, it suffices to apply our algorithm to the DAE
\begin{align} \label{eq:eco_dae}
  \begin{pmatrix}
     I & Q(s) \\
    -I & T(s)
  \end{pmatrix}
  \begin{pmatrix}
    \tilde{y}(s) \\
    \tilde{z}(s)
  \end{pmatrix}
  =
  \begin{pmatrix}
    \hat{f}(s) \\
    0
  \end{pmatrix},
\end{align}
as if the set of nonzero coefficients of entries in $\begin{pmatrix} -I & T(s) \end{pmatrix}$ were independent.

\begin{example}
  Consider the index-2 DAE
  \begin{align} \label{eq:dae_example_2}
    \begin{pmatrix}
       1 & s + \alpha_1 \\
      -1 & -s + \alpha_2
    \end{pmatrix}
    \begin{pmatrix}
      \tilde{x}_1(s) \\
      \tilde{x}_2(s)
    \end{pmatrix}
    =
    \begin{pmatrix}
      \hat{f}_1(s) \\
      \hat{f}_2(s)
    \end{pmatrix},
  \end{align}
  where $\alpha_1$ and $\alpha_2$ are independent parameters.
  Following~\eqref{eq:eco_dae}, we convert this DAE into
  \begin{align} \label{eq:eae_example_2_LM}
    \prn{\begin{array}{cc|cc}
      1 &   &  1 & s \\
        & 1 & -1 & -s \\
      \hline
      -1 &    &   & \alpha_1 \\
         & -1 &   & \alpha_2
    \end{array}}
    \begin{pmatrix}
      \tilde{y}_1(s) \\
      \tilde{y}_2(s) \\
      \tilde{z}_1(s) \\
      \tilde{z}_2(s) \\
    \end{pmatrix}
    =
    \begin{pmatrix}
      \hat{f}_1(s) \\
      \hat{f}_2(s) \\
      0 \\
      0 \\
    \end{pmatrix}.
  \end{align}
  Then we can obtain a solution $\prn{\tilde{x}_1(s), \tilde{x}_2(s)}$ of~\eqref{eq:dae_example_2} by solving the augmented DAE~\eqref{eq:eae_example_2_LM}.
  While the index of~\eqref{eq:eae_example_2_LM} is also three, in general this conversion does not preserve the index of DAEs.
\end{example}

\subsection{Construction of Dual Optimal Solution}
\label{sec:construct_dual_opt}

Let $A(s)$ be an $n \times n$ nonsingular LM-polynomial matrix with $R = \Row(A)$ and $C = \Col(A)$, and let $l$ be the maximum degree of an entry in $A(s)$.
An optimal solution $\prn{p, q}$ of $\D{A}$ satisfying $0 \leq p_i \leq ln$ and $0 \leq q_j \leq ln$ for all $i \in R$ and $j \in C$ is constructed as follows.

First, we obtain a maximum-weight perfect matching $M \subseteq E(A)$ in $G(A)$ by the Hungarian method~\cite{Kuhn1955}.
Next, construct a residual graph $G_M = (W, E_M)$ with $W = R \cup C \cup \set{r}$ and $E_M = \reor{E} \cup M \cup Z$, where $r$ is a new vertex, $\reor{E} = \set{\prn{j, i}}[\prn{i, j} \in E(A)]$, and $Z = \set{\prn{r, i}}[i \in R]$.
The arc length $\funcdoms{\gamma}{E_M}{\setZ}$ of $G_M$ is defined by
\begin{align}
  \gamma(i, j) = \begin{cases}
    -c_{j, i} & \prn{\prn{i, j} \in \reor{E}}, \\
     c_{i, j} & \prn{\prn{i, j} \in M}, \\
     0        & \prn{\prn{i, j} \in Z} \\
  \end{cases}
\end{align}
for each $\prn{i, j} \in E_M$.

\begin{lemma} \label{lem:optimality_mincost_flow}
  For the residual graph $G_M$ defined above, the following hold.
  \begin{enumerate}[label={(\arabic*)}]
    \item All vertices are reachable from $r$.
    \item There is no negative-weight directed cycle with respect to $\gamma$.
  \end{enumerate}
\end{lemma}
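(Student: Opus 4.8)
The plan is to prove the two claims separately, using the structure of the residual graph $G_M$ built from a maximum-weight perfect matching $M$. For claim (1), the key observation is that $r$ has an arc to every vertex $i \in R$ via the set $Z$, so every row vertex is reachable in one step. For a column vertex $j \in C$, since $M$ is a perfect matching, there is a unique $i \in R$ with $(i,j) \in M$; then $(i,j) \in E(A) \subseteq E_M$ gives a directed arc $i \to j$, so $j$ is reachable from $r$ via $r \to i \to j$. This handles all vertices.

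For claim (2), I would invoke the optimality of $M$ as a maximum-weight perfect matching. The standard fact is that $M$ is a maximum-weight perfect matching in $G(A)$ with weights $c_e$ if and only if the residual graph obtained by reversing the matching arcs and negating weights appropriately has no negative-weight cycle (this is the optimality condition underlying the successive-shortest-path / Hungarian method). Concretely, a directed cycle in $G_M$ that avoids $r$ alternates — it uses some forward arcs from $E(A)$ (contributing $-c_{i,j}$) and some reversed arcs from $\reor{M}$ (contributing $+c_{j,i}$); more precisely it decomposes into arcs in $E(A) \setminus M$ traversed forward and arcs of $M$ traversed backward, and such a cycle corresponds to an alternating cycle in $G(A)$ whose symmetric difference with $M$ yields another perfect matching $M'$. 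The weight change is exactly $c(M') - c(M) = -(\text{cycle length})$, so a negative-weight cycle would give $c(M') > c(M)$, contradicting optimality of $M$. Cycles through $r$ are impossible since $r$ has out-arcs only (all of $Z$ points away from $r$) and no in-arcs, so $r$ cannot lie on any directed cycle.

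The main obstacle is making the correspondence in claim (2) between directed cycles in $G_M$ and alternating cycles in $G(A)$ fully rigorous: one must check that any directed cycle avoiding $r$ indeed decomposes so that consecutive arcs alternate between $E(A)$-forward and $\reor{M}$-backward at column vertices, and that the resulting edge set $M \triangle (\text{cycle})$ is genuinely a perfect matching (each vertex still covered exactly once). This is a routine but slightly fiddly parity/degree argument on the bipartite structure: at each vertex of the cycle the in-arc and out-arc must be of opposite types because the two arc types have disjoint "orientations" relative to the bipartition $R \cup C$. Once that is in place, the weight bookkeeping $\gamma(\text{cycle}) = c(M) - c(M')$ is immediate from the definition of $\gamma$, and optimality of $M$ closes the argument. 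I would also remark that $p$ and $q$ can then be taken as (negatives of) shortest-path distances from $r$ in $G_M$, which are well-defined precisely because of claims (1) and (2); this is presumably how the subsequent construction of the dual optimal solution proceeds.
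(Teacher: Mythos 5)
Your proposal is correct and follows essentially the same route as the paper: reachability of $R$ via the arcs from $r$ and of $C$ via matching edges, and for (2) the exchange argument in which a negative directed cycle (necessarily avoiding $r$ and alternating between non-matching forward arcs and reversed matching arcs) yields a perfect matching $M'$ with $c(M') - c(M) = -\sum_{e\in\mathcal{C}}\gamma(e) > 0$, contradicting the maximality of $M$. The alternation/parity details you flag as the "main obstacle" are treated at the same level of brevity in the paper, so nothing essential is missing.
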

\begin{proof}
  (1)
  Every vertex $i \in R$ is reachable from $r$ through an edge $\prn{r, i} \in Z$.
  In addition, since $G(A)$ has a perfect matching $M$, every vertex $j \in C$ is also reachable from $r$ via $i \in R$ through edges $\prn{r, i} \in Z$ and $\prn{i, j} \in M \subseteq E(A)$.
  
  (2)
  This immediately follows from an optimality criterion~\cite[Theorem~9.6]{Korte2008} of the minimum cost flow problem.
\end{proof}

For $i, j \in W$ such that $i$ is reachable to $j$, let $d(i, j)$ denote the length of a shortest path from $i$ to $j$ with respect to the arc length $\gamma$ in $G_M$.
Lemma~4.4 guarantees that $d(r, v)$ is defined for all $v \in W$.
Using $d$, we define
\begin{align} 
  p_i &\defeq d(r, i) - \min_{i^* \in R} d(r, i^*), \label{def:initial_p} \\
  q_j &\defeq d(r, j) - \min_{i^* \in R} d(r, i^*)  \label{def:initial_q}
\end{align}
for each $i \in R$ and $j \in C$.

The next lemma is easily shown in almost the same way as the case for $l = 1$ in~\cite[Lemma~2.2]{Iwata2018a}.

\begin{lemma} \label{lem:bound_pq}
  Let $\prn{p, q}$ be defined in~\eqref{def:initial_p} and~\eqref{def:initial_q}.
  Then $\prn{p, q}$ is an optimal solution of $\D{A}$ satisfying $0 \leq p_i \leq ln$ for each $i \in R$ and $0 \leq q_j \leq ln$ for each $j \in C$.
\end{lemma}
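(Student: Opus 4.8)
The plan is to establish three things in turn: dual feasibility of $(p,q)$, optimality, and the stated bounds. For \emph{feasibility}, I would observe that for any edge $(i,j) \in E(A)$ the triangle inequality for shortest-path distances gives $d(r,j) \le d(r,i) + \gamma(i,j) = d(r,i) - c_{i,j}$, since $(i,j)$ is an arc of $G_M$ with length $-c_{i,j}$. Rearranging and using the definitions~\eqref{def:initial_p} and~\eqref{def:initial_q} yields $q_j - p_i = -d(r,j) + d(r,i) \ge c_{i,j}$, which is exactly the dual constraint. Integrality of $p_i, q_j$ is immediate because all arc lengths $\gamma$ are integers, so every $d(r,\cdot)$ is an integer.

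For \emph{optimality}, I would invoke complementary slackness against the maximum-weight perfect matching $M$: it suffices to show that $q_j - p_i = c_{i,j}$ holds for every $(i,j) \in M$. For such an edge, $(i,j) \in E(A)$ gives the $\ge$ direction just proved, while the reverse arc $(j,i) \in \reor{M}$ has length $\gamma(j,i) = c_{i,j}$, so $d(r,i) \le d(r,j) + c_{i,j}$, i.e. $q_j - p_i \le c_{i,j}$. Hence equality on $M$, and since $M$ is a perfect matching, $\sum_{j} q_j - \sum_i p_i = \sum_{(i,j)\in M} (q_j - p_i) = \sum_{(i,j)\in M} c_{i,j}$, which equals the optimal value of $\P{A}$ by strong duality; therefore $(p,q)$ is optimal for $\D{A}$.

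For the \emph{bounds}, the lower bound $p_i \ge 0$ is by construction, since we subtract $d(r,i)$ from its maximum over $R$; likewise $q_j \ge 0$ once we check $d(r,j) \le \max_{i^*\in R} d(r,i^*)$, which follows because every $j \in C$ is reached via some matching edge from a row and that row's distance is at most the maximum. For the upper bound, I would bound the range of shortest-path distances: along any shortest path from $r$, each arc has length in $[-l, l]$ (arc lengths are $\pm c_{i,j}$ or $0$, and $c_{i,j} = \deg A_{i,j}(s) \le l$), and a shortest simple path visits at most $2n$ vertices hence at most $2n$ arcs; combined with the fact that $\gamma \ge 0$ on the arcs $Z$ out of $r$, one gets $-ln \le \min_i d(r,i) \le d(r,i) \le \max_i d(r,i)$ with the spread of the $d$-values over $R \cup C$ being at most $ln$. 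Plugging into~\eqref{def:initial_p}--\eqref{def:initial_q} gives $0 \le p_i, q_j \le ln$.

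I expect the bookkeeping of the upper bound to be the only mildly delicate step — one has to argue carefully that it is enough to bound the spread of the $d$-values over reachable vertices by $ln$ rather than each $d$-value individually, using that shortest paths may be taken simple and that the per-arc length is at most $l$ in absolute value. The feasibility and optimality parts are routine LP-duality-on-shortest-paths arguments.
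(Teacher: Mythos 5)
Your feasibility and optimality arguments are exactly the paper's: the triangle inequality $\app{d}{r,j} \leq \app{d}{r,i} - c_{i,j}$ on edges of $\app{E}{A}$ gives the dual constraints, and the presence of both $(i,j)$ and its reversal for matching edges forces $q_j - p_i = c_{i,j}$ on $M$, whence optimality by weak duality against the maximum-weight matching. Where you diverge is the upper bound $p_i, q_j \leq ln$. The paper fixes $j$, takes shortest paths to $j$ and to the maximizer $i^*$, splits them at their last common vertex $v$, and bounds $-\app{d}{v,j}$ and $\app{d}{v,i^*}$ separately by counting $\app{E}{A}$-arcs and $\reor{M}$-arcs on the two disjoint subpaths. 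Your route is simpler and also works: every distance satisfies $\app{d}{r,v} \in [-ln, 0]$ (note $\app{d}{r,i} \leq 0$ for $i \in R$ directly from the zero-length arc $(r,i)$, and $\app{d}{r,j} \leq \app{d}{r,i} - c_{i,j} \leq 0$ via the row matched to $j$), so $\max_{i^*} \app{d}{r,i^*} \leq 0$ and each of $p_i = -\app{d}{r,i} + \max_{i^*}\app{d}{r,i^*}$ and $q_j$ is at most $ln$; you do not even need the ``spread'' formulation.

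However, your justification of the key inequality $\app{d}{r,v} \geq -ln$ has a gap as written. Counting ``at most $2n$ arcs, each of length in $[-l,l]$, with the $Z$-arcs nonnegative'' only yields a bound of roughly $-(2n-1)l$, which would give $p_i, q_j \leq (2n-1)l$, not $ln$. To get the factor $n$ you must use the structure of $G_M$: the only negative arcs are those in $\app{E}{A}$ (length $-c_{i,j} \geq -l$), while all arcs in $Z$ and in $\reor{M}$ have nonnegative length, and a simple path from $r$ alternates $R \to C \to R \to \cdots$, entering each column vertex at most once, so it uses at most $n$ arcs of $\app{E}{A}$. Combined with the preceding lemma (no negative cycles, so shortest walks may be taken as simple paths), this gives $\app{d}{r,v} \geq -ln$ and closes the argument. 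With that counting made explicit, your proof is complete and is, if anything, a cleaner derivation of the bound than the paper's last-common-vertex argument.
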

\begin{proof}
  First, we prove that $\prn{p, q}$ is a feasible solution of $\D{A}$.
  By the definition of $\prn{p, q}$, every $p_i \, \prn{i \in R}$ and $q_j \, \prn{j \in C}$ are clearly integer.
  For each $\prn{i, j} \in E(A)$, it holds $d(r, i) \leq d(r, j) - c_{i, j}$.
  Thus
  \begin{align}
    q_j - p_i = d(r, j) - d(r, i) \geq c_{i, j}
  \end{align}
  and this implies that $\prn{p, q}$ is a feasible solution of $\D{A}$.
  
  We second show the optimality of $\prn{p, q}$.
  For each $\prn{i, j} \in M$, since $\prn{i, j} \in E_M$ and $\prn{j, i} \in E_M$, we obtain
  \begin{align}
    q_j - p_i = d(r, j) - d(r, i) = c_{i, j}.
  \end{align}
  Thus it holds that
  \begin{align}
    \sum_{j \in C} q_j - \sum_{i \in R} p_i
    = \sum_{j \in C} d(r, j) - \sum_{i \in R} d(r, i)
    = \sum_{\prn{i, j} \in M} \prn{d(r, j) - d(r, i)}
    = \sum_{\prn{i, j} \in M} c_{i,j}
  \end{align}
  which implies that $\prn{p, q}$ is optimal to $\D{A}$.
  
  Finally, we give the lower and upper bounds on $p_i$ and $q_j$.
  The non-negativity of $p_i$ clearly follows from the definition of $p_i$.
  In addition, since $G(A)$ has a perfect matching, each $j \in C$ is incident to at least one vertex $i \in R$ on $G(A)$.
  Thus we obtain $q_j \geq p_i + c_{i,j} \geq 0$ by $p_i, c_{i, j} \geq 0$.
  Let $i^* \in R$ denote a vertex such that $d(r, i^*) \leq d(r, i)$ for all $i \in R$.
  Fix $j \in C$.
  Let $P_j \subseteq E_M$ and $P_{i^*} \subseteq E_M$ be shortest paths from $r$ to $j$ and $i^*$, respectively.
  Let $v \in W$ be the last common vertex in $P_j$ and $P_{i^*}$.
  Then it holds $q_j = d(r, j) - d(r, i^*) = d(v, j) - d(v, i^*)$.
  Let $Q_j \subseteq P_j$ and $Q_{i^*} \subseteq P_{i^*}$ denote subpaths from $v$ to $j$ and $i^*$, respectively.
  Note that $d(v, j)$ is at most $l$ times the number of edges in $E(A)$ on $Q_j$, whereas $-d(v, i^*)$ is at most $l$ times the number of edges in $\reor{M}$ on $Q_{i^*}$.
  The sum of these upper bounds is at most $ln$ since $Q_{i^*}$ and $Q_j$ do not share the same vertex besides $v$.
  Thus $q_j \leq ln$ holds for each $j \in C$.
  In addition, for each $i \in R$, we have $p_i \leq q_j - c_{i,j} \leq q_j \leq ln$, where $j \in C$ is incident to $i$ in $M$.
\end{proof}

\begin{example}
  Consider the coefficient matrix
  \begin{align} \label{eq:dae_example_coefficient}
    A(s) = 
    \begin{pmatrix}
       1 &    &  1 & s \\
         &  1 & -1 & -s \\
      -1 &    &    & \alpha_1 \\
         & -1 &    & \alpha_2
    \end{pmatrix}
  \end{align}
  in the DAE~\eqref{eq:eae_example_2_LM}.
  An optimal solution of the assignment problem $\P{A}$ is given by
  \begin{align}
    M = \set{(1, 3), (2, 4), (3, 1), (4, 2)}
  \end{align}
  with optimal value $\tdegdet{A} = 1$.
  \Cref{fig:residual_graph_example} shows the residual graph $G_M$ for $M$.
  According to~\eqref{def:initial_p} and~\eqref{def:initial_q}, a dual optimal solution $(p, q)$ is calculated as $p = (0, 0, 0, 0)$ and $q = (0, 0, 0, 1)$.
\end{example}

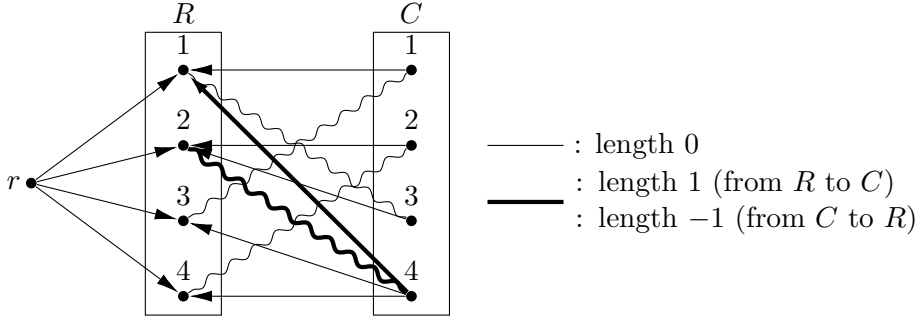
\begin{figure}[tbp]
  \centering
  \begin{tikzpicture}[x=1cm, y=0.5cm]
    \tikzset{node/.style={fill, circle, inner sep=0pt, minimum size=4pt}};
    \node (r)  [node] at (0, 0) {} node [left] {$r$};
    \node (p1) [node] at (2,  3) {}; \node [above=0mm of p1] {1};
    \node (p2) [node] at (2,  1) {}; \node [above=0mm of p2] {2};
    \node (p3) [node] at (2, -1) {}; \node [above=0mm of p3] {3};
    \node (p4) [node] at (2, -3) {}; \node [above=0mm of p4] {4};
    \node (q1) [node] at (5,  3) {}; \node [above=0mm of q1] {1};
    \node (q2) [node] at (5,  1) {}; \node [above=0mm of q2] {2};
    \node (q3) [node] at (5, -1) {}; \node [above=0mm of q3] {3};
    \node (q4) [node] at (5, -3) {}; \node [above=0mm of q4] {4};
    \tikzset{matching/.style={decorate, decoration={snake,segment length=3.5mm, amplitude=0.5mm}}}
    \tikzset{unmatching/.style={-{Latex[length=3mm, width=1.5mm]}}}
    \draw[unmatching] (r) -- (p1);
    \draw[unmatching] (r) -- (p2);
    \draw[unmatching] (r) -- (p3);
    \draw[unmatching] (r) -- (p4);
    \draw[unmatching] (q1) -- (p1);
    \draw[matching] (q3) -- (p1);
    \draw[unmatching, ultra thick] (q4) -- (p1);
    \draw[unmatching] (q2) -- (p2);
    \draw[unmatching] (q3) -- (p2);
    \draw[matching, ultra thick] (p2) -- (q4);
    \draw[matching] (q1) -- (p3);
    \draw[unmatching] (q4) -- (p3);
    \draw[matching] (q2) -- (p4);
    \draw[unmatching] (q4) -- (p4);
    \draw (1.5, 4) rectangle (2.5, -3.5);
    \node at (2, 4.5) {$R$};
    \draw (4.5, 4) rectangle (5.5, -3.5);
    \node at (5, 4.5) {$C$};
    \draw (6, 1) -- (7, 1) node [right] {: length 0};
    \draw[ultra thick] (6, -0.5) -- (7, -0.5) node [right, text width=5cm] {: length $1$ (from $R$ to $C$)\\: length $-1$ (from $C$ to $R$)};
  \end{tikzpicture}
  \caption{%
    The residual graph $G_M$ of~\eqref{eq:dae_example_coefficient} with $M = \set{(1, 3), (2, 4), (3, 1), (4, 2)}$.
    Edges in $M$, which are shown by wavy curves, are bidirectional and have lengths whose signs reverse according to the direction.
  }
  \label{fig:residual_graph_example}
\end{figure}

\subsection{Matrix Modification}
\label{sec:matrix_modification}

Let $A(s) = \binom{Q(s)}{T(s)}$ be an $n \times n$ nonsingular LM-polynomial matrix that is not upper-tight.
Let $A^\# = \binom{Q^\#}{T^\#}$ be the tight coefficient matrix with respect to an optimal solution $\prn{p, q}$ of $\D{A}$.
Without loss of generality, we assume that $\Row(Q) = R_Q = \set{1, \ldots, m_Q}$ and $p_1 \leq \cdots \leq p_{m_Q}$, where $m_Q = \card{R_Q}$.

Recall the rank identity~\eqref{eq:rank_identity_min}.
Let $J^* \subseteq C$ be a column subset that minimizes the right-hand side of the identity for $A^\#$, i.e., it holds
\begin{align} \label{eq:Jstar}
  \rank A^\# = \rank Q^\# [R_Q, J^*] + \trank T^\# [R_T, J^*] + \card{C \setminus J^*}.
\end{align}
Such $J^*$ is called a $\emph{minimizer}$ of~\eqref{eq:rank_identity_min}.
By a row transformation of $Q^\#$, we obtain a matrix $\bar{Q}^\# = UQ^\#$ such that
\begin{align} \label{eq:rank_Qsharp}
  \rank \bar{Q}^\#[R_Q, J^*] = \trank \bar{Q}^\#[R_Q, J^*].
\end{align}
In particular, this transformation can be accomplished only by operations of adding a scalar multiple of a row $i \in R_Q$ to another row $j \in R_Q$ with $p_i > p_j$.
Then the matrix $U$ is upper-triangular due to the order of rows in $R_Q$.
This is the \emph{forward elimination} on $\bar{Q}^\#[R_Q, J^*]$ with the order of the rows reversed.
Consider
\begin{align} \label{def:U_tilde}
  U_Q(s) =
  \diag(s^{-p_1}, \ldots, s^{-p_{m_Q}})
  U
  \diag(s^{p_1}, \ldots, s^{p_{m_Q}}),
\end{align}
where $\diag(a_1, \ldots, a_n)$ denotes a diagonal matrix with diagonal entries $a_1, \ldots, a_n$.
Note that each entry in $U_Q(s)$ is a polynomial because $U$ is upper-triangular.
In addition, since $\det U_Q(s) = \det U$ is a nonzero constant, $U_Q(s)$ is unimodular.

We define $D_p(s) = \diag(s^{p_1}, \ldots, s^{p_n})$ and $D_q(s)=\diag(s^{q_1}, \ldots, s^{q_n})$. 
Using $U_Q(s)$, we update $A(s)$ to $\bar{A}(s)$ as in~\eqref{eq:update_A}: 
\begin{align} \label{eq:update_A_2}
  \bar{A}(s) =
  \begin{pmatrix}
    U_Q(s) & O \\
    O      & I
  \end{pmatrix}
  A(s)
  =
  D_p^{-1}(s)
  \begin{pmatrix}
    U & O \\
    O & I
  \end{pmatrix}
  D_p(s)
  A(s).
\end{align}

To show that $\prn{p, q}$ is not an optimal solution of $\D{\bar{A}}$, we use the following lemma, which is given by Murota~\cite{Murota1990} as a combinatorial counterpart to \cref{lem:tightness1}.

\begin{lemma}[{\cite[Proposition 6.2]{Murota1990}}] \label{lem:tightness2}
  Let $A(s)$ be an $n \times n$ nonsingular polynomial matrix and let $A^\#$ be the tight coefficient matrix of $A(s)$ with respect to a feasible solution $\prn{p, q}$ of $\D{A}$.
  Then $\prn{p, q}$ is optimal if and only if $\trank A^\# = n$.
\end{lemma}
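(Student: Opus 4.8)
The plan is to derive the lemma from linear programming duality for the assignment problem, once the support of the tight coefficient matrix has been identified with the set of tight dual constraints.

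First I would record the structural observation underlying everything: for a feasible solution $\prn{p, q}$ of $\D{A}$ and indices $i \in R$, $j \in C$, the entry $A^\#_{i,j}$ is nonzero if and only if $\prn{i, j} \in \app{E}{A}$ and the dual inequality $q_j - p_i \ge c_{i,j}$ holds with equality. Indeed, if $A_{i,j}(s) = 0$ then $A^\#_{i,j} = 0$ by definition; and if $A_{i,j}(s) \ne 0$ then $\deg A_{i,j}(s) = c_{i,j} \le q_j - p_i$ by feasibility, so the coefficient of $s^{q_j - p_i}$ in $A_{i,j}(s)$ vanishes unless $q_j - p_i = c_{i,j}$, in which case it equals the nonzero leading coefficient. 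Let $E^\#$ denote this set of tight edges. Since the bipartite graph $\prn{R, C; E^\#}$ has exactly the nonzero positions of $A^\#$ as its edges, $\trank A^\#$ is the size of a maximum matching in $\prn{R, C; E^\#}$; as $A^\#$ is $n \times n$, this gives $\trank A^\# = n$ if and only if $\prn{R, C; E^\#}$ admits a perfect matching.

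For the direction ($\Rightarrow$), assume $\prn{p, q}$ is optimal for $\D{A}$. Because $A(s)$ is nonsingular, $\app{G}{A}$ has a perfect matching, so $\P{A}$ is feasible and attains its optimum at some perfect matching $M$ with $\sum_{e \in M} c_e = \tdegdet{A}$, which by optimality of $\prn{p,q}$ equals $\sum_{j \in C} q_j - \sum_{i \in R} p_i$. Since $M$ covers every row and every column exactly once, $\sum_{\prn{i,j} \in M}\prn{q_j - p_i} = \sum_{j} q_j - \sum_{i} p_i$, so $\sum_{\prn{i,j} \in M}\prn{q_j - p_i - c_{i,j}} = 0$; as every summand is nonnegative by feasibility, each is zero, i.e. $M \subseteq E^\#$. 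Hence $\prn{R, C; E^\#}$ has a perfect matching and $\trank A^\# = n$. For the direction ($\Leftarrow$), assume $\trank A^\# = n$. By the support description there is a perfect matching $M \subseteq E^\#$, which in particular is a perfect matching of $\app{G}{A}$, and
\[
  \sum_{\prn{i,j} \in M} c_{i,j} \;=\; \sum_{\prn{i,j} \in M}\prn{q_j - p_i} \;=\; \sum_{j \in C} q_j - \sum_{i \in R} p_i .
\]
The left-hand side is at most $\tdegdet{A}$ because $M$ is a feasible solution of $\P{A}$, and the right-hand side is at least $\tdegdet{A}$ because $\prn{p, q}$ is a feasible solution of $\D{A}$; therefore all three coincide and $\prn{p, q}$ is optimal.

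I do not anticipate a genuine obstacle: the argument is essentially complementary slackness for the assignment problem, viewed through the support identity for $A^\#$. The only step that needs care is the first one — checking that dual feasibility is exactly what makes $A^\#_{i,j} \ne 0$ equivalent to tightness of the dual constraint of edge $\prn{i,j}$, together with the correct treatment of the zero entries of $A(s)$ — after which the duality bookkeeping is routine. This also makes transparent the stated parallel with Lemma~\ref{lem:tightness1}: nonsingularity corresponds to full term-rank of $A^\#$, while upper-tightness of $A(s)$ corresponds to optimality of $\prn{p,q}$.
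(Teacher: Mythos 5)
Your proof is correct. Note, however, that the paper does not prove this lemma at all --- it is quoted from Murota (Proposition~6.2 of the cited 1990 paper) --- so there is no in-paper argument to compare against; what you have supplied is the standard complementary-slackness proof, and it is sound. Your support identification is the right key step: for a feasible $\prn{p,q}$ the nonzero entries of $A^\#$ are exactly the tight edges $q_j - p_i = c_{i,j}$ of $\app{E}{A}$, so $\trank A^\# = n$ is equivalent to the existence of a perfect matching of tight edges, and the two directions then follow from duality. The only implicit ingredient worth flagging is strong duality with integral dual optima for the assignment problem (Egerv\'ary), which you use in the forward direction when you equate the optimal matching weight with $\sum_j q_j - \sum_i p_i$ at the optimal dual; this is consistent with the paper's own convention of writing $\tdegdet{A}$ for the common optimum of $\P{A}$ and $\D{A}$, so it is not a gap. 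As an aside, the forward direction can also be obtained without strong duality by the contrapositive argument the paper effectively uses in Section~\ref{sec:dual_updates}: if there is no perfect matching among the tight edges, K\"onig's theorem gives a vertex cover of size less than $n$, and the update~\eqref{eq:update_pq} produces a feasible dual solution with strictly smaller objective, so $\prn{p,q}$ is not optimal.
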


\begin{lemma} \label{lem:tilde_A_optimality}
  Let $A(s) = \binom{Q(s)}{T(s)}$ be an $n \times n$ nonsingular LM-polynomial matrix that is not upper-tight, and $A^\# = \binom{Q^\#}{T^\#}$ the tight coefficient matrix with respect to an optimal solution $\prn{p, q}$ of $\D{A}$.
  Then for the LM-polynomial matrix $\bar{A}(s)$ defined in~\eqref{eq:update_A_2}, the value $\prn{p, q}$ is feasible on $\D{\bar{A}}$ but not optimal.
\end{lemma}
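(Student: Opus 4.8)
The plan is to verify feasibility directly from the construction of $U_Q(s)$, and then to disprove optimality by applying Lemma~\ref{lem:tightness2}: it suffices to exhibit that the tight coefficient matrix $\bar{A}^\#$ of $\bar{A}(s)$ with respect to $(p,q)$ has term-rank strictly less than $n$. The key observation driving the argument is that the transformation $U_Q(s) = D_p^{-1}(s)\, U\, D_p(s)$ restricted to the scaled picture is exactly the backward elimination $U$ applied to the tight coefficient matrix $Q^\#$, so that the new tight coefficient block coming from $Q$ is $\bar{Q}^\# = UQ^\#$, while the $T$-block is unchanged: $\bar{A}^\# = \binom{UQ^\#}{T^\#}$. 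This requires checking that multiplying a row $i$ by a scalar and adding it to row $j$ with $p_i > p_j$ does not raise any entry's degree past the tight threshold $q_k - p_j$ in row $j$ — indeed $\deg(s^{p_i-p_j}\cdot(\text{entry of row }i))$ is at most $(p_i-p_j)+(q_k-p_i) = q_k-p_j$, so $(p,q)$ stays dual-feasible for $\bar{A}$ and the coefficient of $s^{q_k-p_j}$ in the new row $j$ is precisely the corresponding entry of $UQ^\#$. This gives feasibility of $(p,q)$ on $\D{\bar A}$ simultaneously.

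Next I would compute $\trank \bar{A}^\#$ via the term-rank identity of Proposition~\ref{prop:trank_identity_min} applied to the LM-matrix $\bar{A}^\# = \binom{UQ^\#}{T^\#}$, evaluated at the same minimizer $J^*$ of the rank identity~\eqref{eq:rank_identity_min} for $A^\#$:
\begin{align*}
  \trank \bar{A}^\# \le \trank \bar{Q}^\#[R_Q, J^*] + \trank T^\#[R_T, J^*] + \card{C \setminus J^*}.
\end{align*}
By construction~\eqref{eq:rank_Qsharp}, the backward elimination makes $\trank \bar{Q}^\#[R_Q, J^*] = \rank \bar{Q}^\#[R_Q, J^*] = \rank Q^\#[R_Q, J^*]$, since row operations preserve rank. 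Substituting, the right-hand side equals $\rank Q^\#[R_Q, J^*] + \trank T^\#[R_T, J^*] + \card{C \setminus J^*}$, which by~\eqref{eq:Jstar} is exactly $\rank A^\#$. Since $A(s)$ is not upper-tight, Lemma~\ref{lem:tightness1} gives $\rank A^\# < n$, hence $\trank \bar{A}^\# < n$, and Lemma~\ref{lem:tightness2} yields that $(p,q)$ is not optimal on $\D{\bar A}$.

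The main obstacle I expect is the bookkeeping in the first step: one must be careful that the "tight coefficient matrix of $\bar A$ at $(p,q)$" really is $\binom{UQ^\#}{T^\#}$ and not something with spurious higher-degree terms, i.e. that the degree bound above is tight in the right entries and that no cancellation issue arises — here the triangularity of $U$ (guaranteed by ordering $R_Q$ so that $p_1 \le \cdots \le p_{m_Q}$ and only adding higher-$p$ rows into lower-$p$ rows) is what makes $U_Q(s)$ polynomial and keeps the conjugation $D_p^{-1} U D_p$ from introducing negative powers of $s$. A secondary subtlety is that the minimizer $J^*$ for $A^\#$ need not be a minimizer for $\bar{A}^\#$; but we only need the inequality direction of Proposition~\ref{prop:trank_identity_min}, so evaluating at $J^*$ as a feasible choice suffices. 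Once these points are nailed down, the chain of (in)equalities closes immediately.
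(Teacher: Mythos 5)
Your proposal is correct and follows essentially the same route as the paper: feasibility via the observation that conjugating $U$ by $D_p(s)$ keeps every entry's degree at most $q_j-p_i$ (the paper phrases this through $H(s)=D_p(s)\bar{A}(s)D_q^{-1}(s)$ having nonpositive degrees, which is the same computation), and non-optimality by identifying $\bar{A}^\#=\binom{UQ^\#}{T^\#}$ and bounding $\trank\bar{A}^\#$ with Proposition~\ref{prop:trank_identity_min} at the minimizer $J^*$, then invoking \eqref{eq:rank_Qsharp}, \eqref{eq:Jstar}, Lemma~\ref{lem:tightness1} and Lemma~\ref{lem:tightness2}. No gaps to report.
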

\begin{proof}
  Consider a rational function matrix
  \begin{align} \label{def:F}
    H(s) = D_p(s) \bar{A}(s) D_q^{-1}(s).
  \end{align}
  For each $i \in R$ and $j \in C$, it holds that $\deg H_{i,j}(s) = \bar{c}_{i, j} + p_i - q_j$, where $\bar{c}_{i,j} = \deg \bar{A}_{i,j}(s)$.
  By substituting~\eqref{eq:update_A_2} into~\eqref{def:F}, we obtain
  \begin{align}
    H(s) &= \begin{pmatrix}
      U & O \\
      O & I
    \end{pmatrix} D_p(s) A(s) D_q^{-1}(s)
    = \begin{pmatrix}
      U & O \\
      O & I
    \end{pmatrix} \prn{A^\# + A^\infty(s)},
  \end{align}
  where $A^\infty(s)$ is a matrix whose entries are polynomials in $s^{-1}$ without
constant terms.
  Hence for each $i \in R$ and $j \in C$, it holds $\deg H_{i,j}(s) \leq 0$, which implies $\bar{c}_{i, j} \leq q_j - p_i$.
  Therefore $\prn{p, q}$ is feasible on $\D{\bar{A}}$.
  
  Next, we show that $\prn{p, q}$ is not optimal on $\D{\bar{A}}$.
  From~\eqref{eq:update_A_2}, the tight coefficient matrix $\bar{A}^\#$ of $\bar{A}(s)$ with respect to $\prn{p, q}$ is
  \begin{align} \label{def:tilde_A_sharp}
    \bar{A}^\#
    =
    \begin{pmatrix} U & O \\ O & I \end{pmatrix} A^\#
    =
    \begin{pmatrix} \bar{Q}^\# \\ T^\# \end{pmatrix},
  \end{align}
  where $\bar{Q}^\# = UQ^\#$.
  From \cref{prop:trank_identity_min} and~\eqref{eq:rank_Qsharp}, it holds
  \begin{align}
    \trank \bar{A}^\#
    &= \min \set{\trank \bar{Q}^\#[R_Q, J] + \trank T^\#[R_T, J] + \card{C \setminus J}}[J \subseteq C] \\
    &\leq \trank \bar{Q}^\#[R_Q, J^*] + \trank T^\#[R_T, J^*] + \card{C \setminus J^*} \\
    &= \rank \bar{Q}^\#[R_Q, J^*] + \trank T^\#[R_T, J^*] + \card{C \setminus J^*}.
  \end{align}
  Now since $Q^\#[R_Q, J^*]$ and $\bar{Q}^\#[R_Q, J^*] = UQ^\#[R_Q, J^*]$ have the same rank, we obtain
  \begin{align}
    \trank \bar{A}^\#
    &\leq \rank Q^\#[R_Q, J^*] + \trank T^\#[R_T, J^*] + \card{C \setminus J^*} = \rank A^\#,
  \end{align}
  where the last equality comes from~\eqref{eq:Jstar}.
  In addition, since $\rank \bar{A}^\# = \rank A^\#$ from~\eqref{def:tilde_A_sharp}, we have $\trank \bar{A}^\# \leq \rank \bar{A}^\#$, which implies $\trank \bar{A}^\# = \rank \bar{A}^\# = \rank A^\#$.
  Furthermore, since $A(s)$ is not upper-tight, we have $\rank A^\# < n$ by \cref{lem:tightness1}. 
  Thus, $\trank A^\# = \rank A^\# < n$ holds.
  It then follows from \cref{lem:tightness2} that $\prn{p, q}$ is not optimal on $\D{\bar{A}}$.
\end{proof}

From \cref{lem:tilde_A_optimality} and the unimodularity of $U_Q(s)$, we obtain the following.

\begin{corollary}
  Let $A(s) = \binom{Q(s)}{T(s)}$ be an $n \times n$ nonsingular LM-polynomial matrix that is not upper-tight, and $\bar{A}(s)$ the LM-polynomial matrix defined in~\eqref{eq:update_A_2}.
  Then $\tdegdet{\bar{A}} \leq \tdegdet{A} - 1$ and $\degdet{A} = \degdet{\bar{A}}$ hold.
\end{corollary}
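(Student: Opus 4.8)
The plan is to read off both statements almost directly from \eqref{eq:update_A_2} together with Lemma~\ref{lem:tilde_A_optimality}, so I expect no real obstacle here; the only care needed is to track which optimization problem each optimal value refers to.

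First I would establish $\degdet{A} = \degdet{\bar{A}}$. Recall that the text has already verified that $U_Q(s)$ in \eqref{def:U_tilde} is unimodular, i.e. $\det U_Q(s)$ is a nonzero constant. Hence the block matrix $\begin{pmatrix} U_Q(s) & O \\ O & I \end{pmatrix}$ appearing in \eqref{eq:update_A_2} has determinant $\det U_Q(s) \in \setK \setminus \set{0}$, so by multiplicativity of determinants $\det \bar{A}(s) = \det U_Q(s) \cdot \det A(s)$. Taking degrees in $s$ gives $\degdet{\bar{A}} = \deg \det A(s) = \degdet{A}$ (and in particular $\bar{A}(s)$ is again nonsingular).

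Next I would prove $\tdegdet{\bar{A}} \leq \tdegdet{A} - 1$. By Lemma~\ref{lem:tilde_A_optimality}, the pair $\prn{p, q}$ is feasible on $\D{\bar{A}}$ but not optimal. Its objective value on $\D{\bar{A}}$ is $\sum_{j \in C} q_j - \sum_{i \in R} p_i$, which is exactly $\tdegdet{A}$ because $\prn{p, q}$ is, by hypothesis, an optimal solution of $\D{A}$ (and $\D{A}$, $\D{\bar{A}}$ share the same objective function $\sum_j q_j - \sum_i p_i$). Since $\D{\bar{A}}$ is a minimization problem and $\prn{p, q}$ is feasible but not optimal for it, the optimal value $\tdegdet{\bar{A}}$ is strictly smaller: $\tdegdet{\bar{A}} < \tdegdet{A}$. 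Finally, because the variables of $\D{\bar{A}}$ are constrained to be integers and the edge weights $c_{i,j}$ are integers, $\tdegdet{\bar{A}}$ is an integer, so the strict inequality sharpens to $\tdegdet{\bar{A}} \leq \tdegdet{A} - 1$, completing the proof.

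The one point worth stating explicitly, rather than glossing over, is that the objective value of the unchanged pair $\prn{p, q}$ is identical in $\D{A}$ and $\D{\bar{A}}$; everything else is an immediate invocation of Lemma~\ref{lem:tilde_A_optimality}, the unimodularity of $U_Q(s)$, and integrality.
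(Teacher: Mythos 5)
Your proof is correct and follows essentially the same route the paper intends: the paper derives this corollary directly from the unimodularity of $U_Q(s)$ (which preserves $\deg\det$) together with Lemma~\ref{lem:tilde_A_optimality} (feasibility but non-optimality of $\prn{p,q}$ on $\D{\bar{A}}$, whose objective value there equals $\tdegdet{A}$), with integrality turning the strict inequality into $\tdegdet{\bar{A}} \leq \tdegdet{A} - 1$. Your explicit remark that the objective value of $\prn{p,q}$ is the same in both dual problems is exactly the small point the paper leaves implicit.
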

\begin{example}
  Consider the LM-polynomial matrix~\eqref{eq:dae_example_coefficient} again.
  The tight coefficient matrix $A^\#$ with respect to $p = (0, 0, 0, 0)$ and $q = (0, 0, 0, 1)$ is
  \begin{align}
    A^\#
    = \begin{pmatrix}
      Q^\# \\
      T^\#
    \end{pmatrix}
    =
    \begin{pmatrix}
      1 &   & 1 & 1 \\
        & 1 & -1 & -1 \\
      -1 &    &   &   \\
         & -1 &   &  
    \end{pmatrix},
  \end{align}
  where the row sets $R_Q$ of $Q^\#$ and $R_T$ of $T^\#$ correspond to the first and last two rows in $A^\#$, respectively.
  A minimizer $J^* \subseteq C$ is the set of the right two columns as follows:
  \begin{align}
    A^\# = \begin{blockarray}{ccccc}
      \overmat{2}{1 & 1}{C \setminus J^*} & \overmat{2}{-1 & -1}{J^*}
      \\
      \begin{block}{(cccc)c}
         1 &    &  1 &  1 & \rightmat{2}{R_Q} \\
           &  1 & -1 & -1 & \\
        -1 &    &    &    & \rightmat{2}{R_T} \\
           & -1 &    &    & \\
      \end{block}
    \end{blockarray}
    \quad \quad.
  \end{align}
  Then the rank of $A^\#$ is calculated by~\eqref{eq:Jstar} as $Q^\#[R_Q, J^*] + T^\#[R_T, J^*] + \card{C \setminus J^*} = 1+0+2 = 3$.
  Since $A^\#$ is not upper-tight, we need to modify $A(s)$.
  By performing Gaussian elimination on $Q^\#[R_Q, J^*] = \begin{pmatrix} 1 & 1 \\ -1 & -1 \end{pmatrix}$, we obtain
  \begin{align}
    \bar{Q}^\#[R_Q, J^*] = U Q^\#[R_Q, J^*] = \begin{pmatrix}  &  \\ -1 & -1 \end{pmatrix},
  \end{align}
  where $U = \begin{pmatrix} 1 & 1 \\   & 1 \end{pmatrix}$.
  The unimodular matrix $U_Q(s)$ defined by~\eqref{def:U_tilde} coincides with $U$ since all $p_i$ are zero.
  According to~\eqref{eq:update_A_2}, we update $A(s)$ into
  \begin{align} \label{eq:example_modified_As}
    \bar{A}(s) = 
      \begin{pmatrix}
        U_Q(s) & O \\
        O & I
      \end{pmatrix}
      A(s)
      =
      \begin{pmatrix}
        1 & 1 &   &  \\
          & 1 &   &  \\
          &   & 1 &  \\
          &   &   & 1   
      \end{pmatrix}
      \begin{pmatrix}
       1 &    &  1 & s \\
         &  1 & -1 & -s \\
      -1 &    &    & \alpha_1 \\
         & -1 &    & \alpha_2
    \end{pmatrix}
    =
    \begin{pmatrix}
       1 &  1 &    &   \\
         &  1 & -1 & -s \\
      -1 &    &    & \alpha_1 \\
         & -1 &    & \alpha_2
    \end{pmatrix}.
  \end{align}
\end{example}

\subsection{Dual Updates}
\label{sec:dual_updates}

Let $\prn{p, q}$ be a feasible solution of $\D{\bar{A}}$.
We obtain an optimal solution of $\D{\bar{A}}$ by iterating the following procedure.

Let $\bar{A}^\#$ be the tight coefficient matrix of $\bar{A}(s)$ with respect to $\prn{p, q}$.
First we check if $\trank \bar{A}^\# = n$.
If it is, $\prn{p, q}$ is an optimal solution of $\D{\bar{A}}$ from \cref{lem:tightness2} and we are done.
Otherwise, we construct a feasible solution $(p', q')$ of $\D{\bar{A}}$ such that the difference
\begin{align} \label{def:Delta}
  \Delta \defeq \sum_{j \in C} (q'_j - q_j) - \sum_{i \in R} (p'_i - p_i)
\end{align}
of the objective values is negative.
Let $G^\# = \prn{R \cup C, E^\#}$ be a bipartite graph defined by
\begin{align}
  E^\#
  \defeq \set[\big]{\prn{i, j} \in R \times C}[\bar{A}^\#_{i,j} \neq 0]
  =      \set{\prn{i, j} \in E(\bar{A})}[q_j - p_i = \bar{c}_{i,j}].
\end{align}
Since $\prn{p, q}$ is not optimal, there is no perfect matching of $G^\#$.
Thus $G^\#$ has a vertex cover $S \subseteq R \cup C$ with $\card{S} < n$ by the K\"{o}nig--Egerv\'{a}ry theorem.
Using this $S$, we define $\prn{p', q'}$ as follows:
\begin{align} \label{eq:update_pq}
  p'_i = \begin{cases}
    p_i     & \prn{i \in R \cap S} \\
    p_i + 1 & \prn{i \in R \setminus S}
  \end{cases}
  , \quad
  q'_j = \begin{cases}
    q_j + 1 & \prn{j \in C \cap S} \\
    q_j     & \prn{j \in C \setminus S}
  \end{cases}
\end{align}
for $i \in R$ and $j \in C$.

The following lemma is a restatement of a well-known fact~\cite{Kuhn1955}.
We give a proof for completeness.

\begin{lemma} \label{lem:dual_update}
  Let $\prn{p, q}$ be a feasible but not optimal solution of $\D{\bar{A}}$ and $\prn{p', q'}$ defined in~\eqref{eq:update_pq}.
  Then the difference $\Delta$ of the objective values in~\eqref{def:Delta} is negative, and $\prn{p', q'}$ is a feasible solution of $\D{\bar{A}}$.
\end{lemma}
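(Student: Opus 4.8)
The plan is to verify the two claims separately: that $\Delta < 0$, and that $\prn{p', q'}$ is feasible on $\D{\bar{A}}$. Throughout, write $V = R \cup C$, let $S$ be the vertex cover of $G^\#$ with $\card{S} < n$ guaranteed by K\"{o}nig--Egerv\'{a}ry's theorem, and decompose $S = S_R \cup S_C$ with $S_R = R \cap S$, $S_C = C \cap S$. First I would compute $\Delta$ directly from~\eqref{eq:update_pq}: only columns in $S_C$ contribute $+1$ to $\sum_j q'_j - q_j$, so that sum equals $\card{S_C}$; only rows in $R \setminus S_R$ contribute $+1$ to $\sum_i p'_i - p_i$, so that sum equals $\card{R} - \card{S_R} = n - \card{S_R}$. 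Hence
\begin{align*}
  \Delta = \card{S_C} - \prn{n - \card{S_R}} = \card{S_R} + \card{S_C} - n = \card{S} - n < 0,
\end{align*}
which is exactly the negativity claim.

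For feasibility I must check $q'_j - p'_i \geq \bar{c}_{i,j}$ for every edge $\prn{i, j} \in \app{E}{\bar{A}}$; since $\prn{p, q}$ is already feasible we know $q_j - p_i \geq \bar{c}_{i,j}$, so it suffices to show $q'_j - p'_i \geq q_j - p_i$ whenever $q_j - p_i = \bar{c}_{i,j}$ is tight, i.e.\ whenever $\prn{i,j} \in E^\#$, and to handle the non-tight edges separately. For non-tight edges ($q_j - p_i > \bar c_{i,j}$, integer gap $\geq 1$) the worst case is $p'_i = p_i + 1$, $q'_j = q_j$, which still yields $q'_j - p'_i = q_j - p_i - 1 \geq \bar c_{i,j}$, so feasibility is preserved there regardless of $S$. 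For an edge $\prn{i,j} \in E^\#$, the cover property of $S$ says $i \in S_R$ or $j \in S_C$ (or both). If $j \in S_C$ then $q'_j = q_j + 1$ and $p'_i \leq p_i + 1$, so $q'_j - p'_i \geq q_j - p_i = \bar c_{i,j}$. If instead $i \in S_R$ then $p'_i = p_i$ and $q'_j \geq q_j$, giving again $q'_j - p'_i \geq q_j - p_i = \bar c_{i,j}$. In every case $\prn{p', q'}$ satisfies the dual constraint, and integrality is obvious since we only added $0$ or $1$. This establishes feasibility.

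The only subtlety — and the step I would be most careful about — is the bookkeeping on which edges are tight versus slack: the cover $S$ only controls the tight edges $E^\#$, so the argument must be split so that slack edges are handled by the trivial "$-1$ at most" estimate and tight edges by the cover property. Everything else is a one-line computation. I would present the $\Delta$ computation first, then the case analysis on edges, and conclude feasibility; no deeper result beyond K\"{o}nig--Egerv\'{a}ry (already invoked above) is needed.
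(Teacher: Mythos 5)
Your proposal is correct and follows essentially the same argument as the paper: the same computation $\Delta = \card{S} - n < 0$, and the same two ingredients for feasibility, namely the cover property of $S$ on the tight edges and integrality on the slack edges. The only difference is cosmetic — you organize the case analysis by whether the edge is tight, while the paper cases on whether the edge is covered by $S$ — and the two splits are logically interchangeable.
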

\begin{proof}
  The difference of the objective values is $\Delta = \card{C \cap S} - \card{R \setminus S} = \card{S} - \card{R} < 0$.
  Next, we show the feasibility of $\prn{p', q'}$.
  For every $\prn{i, j} \in E(\bar{A})$, it holds $q_j - p_i \geq \bar{c}_{i,j}$ since $\prn{p, q}$ is feasible.
  If $i \in S$ or $j \in S$, it holds $(q'_j - q_j) - (p'_i - p_i) \geq 0$, which imply $q'_j - p'_i \geq q_j - p_i \geq \bar{c}_{i,j}$.
  If $i \notin W$ and $j \notin W$, then $\prn{i, j}$ is not an edge of $G^\#$ since $\prn{i, j}$ is not covered by $W$.
  Hence it holds $q_j - p_i > \bar{c}_{i,j}$, and thus $q'_j - p'_i = q_j - p_i - 1 \geq \bar{c}_{i,j}$.
\end{proof}

We update $\prn{p, q}$ to $\prn{p', q'}$, and go back to the optimality checking.
From \cref{lem:dual_update}, it is guaranteed that $\prn{p, q}$ becomes an optimal solution of $\D{\bar{A}}$ by iterating the update process above.

\begin{example}
  Consider the modified LM-polynomial matrix~\eqref{eq:example_modified_As}.
  The tight coefficient matrix $\bar{A}^\#$ of $\bar{A}(s)$ with respect to $p = (0, 0, 0, 0)$ and $q = (0, 0, 0, 1)$ is
  \begin{align}
    A^\#
    =
    \begin{pmatrix}
       1 &  1 &    &    \\
         &  1 & -1 & -1 \\
      -1 &    &    &    \\
         & -1 &    & 
    \end{pmatrix}.
  \end{align}
  Let $S$ be the set of the first and second columns and the second row of $A^\#$.
  Then $S$ is a vertex cover of $G^\#$ with $\card{S} < 4$.
  Following~\eqref{eq:update_pq}, we update $(p, q)$ to $p' = (1, 0, 1, 1)$ and $q' = (1, 1, 0, 1)$.
  We then go back to Phase~2 for $\bar{A}(s)$.
  It is indeed confirmed in the next iteration that $\bar{A}(s)$ is upper-tight, and thus the iteration ends at this point.
  We can obtain a low-index DAE by applying the MS-algorithm.
\end{example}

\subsection{Complexity Analysis}
\label{sec:complexity_analysis}

This section is devoted to complexity analysis.
The dominating part in our algorithm is the matrix multiplications in~\eqref{eq:update_A_2}.

Let $A(s)$ be an $n \times n$ nonsingular LM-polynomial matrix and let $A^\#$ be the tight coefficient matrix with respect to an optimal solution $\prn{p, q}$ of $\D{A}$.
From the definition of $A^\#$, we can express $A(s)$ as
\begin{align} \label{eq:As_neg}
  A(s) =
    D_p^{-1}(s)
    \prn{A^\# + \sum_{k=1}^K s^{-k} V_k}
    D_q(s)
\end{align}
for some $K$ matrices $V_1, V_2, \ldots, V_K$ with $V_K \neq O$.
By~\eqref{eq:update_A_2} and~\eqref{eq:As_neg}, we have
\begin{align}
  \bar{A}(s) =
    D^{-1}_p(s)
    \begin{pmatrix} U & O \\ O & I \end{pmatrix}
    \prn{A^\# + \sum_{k=1}^K s^{-k} V_k}
    D_q(s).
\end{align}
Therefore, we can compute $\bar{A}(s)$ by performing $K+1$ constant matrix multiplications.

By $V_K \neq O$, there exist $i \in R$ and $j \in C$ such that the $\prn{i, j}$ entry in $V_K$ is nonzero.
Then the degree of the corresponding term in $A_{i,j}(s)$ is equal to $q_j - p_i - K$.
Since $A_{i,j}(s)$ is a polynomial, we have $q_j - p_i - K \geq 0$, which implies $K \leq q_j - p_i \leq q_j$.
The following lemma bounds $p_i$ and $q_j$ at any iteration of our algorithm.

\begin{lemma} \label{lem:bound_l}
  During the algorithm, the values $p_i$ and $q_j$ are at most $2 ln$ for $i \in R$ and $j \in C$, where $l$ is the maximum degree of an entry in $A(s)$.
\end{lemma}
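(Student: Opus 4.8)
The plan is to track how the dual variables $(p, q)$ evolve over the course of the algorithm and bound their growth. We start from the initial dual solution constructed in Section~\ref{sec:construct_dual_opt}, which by Lemma~\ref{lem:bound_pq} satisfies $0 \le p_i \le ln$ and $0 \le q_j \le ln$. The algorithm then alternates between matrix modifications (Phase~3, Section~\ref{sec:matrix_modification}) and dual updates (Section~\ref{sec:dual_updates}). A matrix modification by the unimodular $U_Q(s)$ of~\eqref{def:U_tilde} keeps $(p,q)$ feasible on $\D{\bar A}$ by Lemma~\ref{lem:tilde_A_optimality}, so it does not change the dual values at all; it is only the dual updates~\eqref{eq:update_pq} that alter $(p,q)$, and each such update increases each $p_i$ and each $q_j$ by at most one. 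Hence the key quantity to bound is the total number of dual-update steps performed throughout the algorithm.

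First I would recall that each single execution of Phase~3 decreases $\tdegdet{A}$ by at least one (the corollary following Lemma~\ref{lem:tilde_A_optimality}), while $\degdet{A}$ is preserved, so the number of Phase~3 iterations is at most $\tdegdet{A} - \degdet{A}$ at the start, which is at most $\tdegdet{A} \le \max_{i,j}(q_j - p_i) \cdot n$ bounded crudely by $ln \cdot n$; but a sharper and cleaner route is to note that after each Phase~3 modification, the sequence of dual updates strictly decreases the objective value $\sum_j q_j - \sum_i p_i = \tdegdet{\bar A}$ by Lemma~\ref{lem:dual_update}, and the objective is bounded below by $\degdet{\bar A} = \degdet{A} \ge 0$. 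So across the whole algorithm the total decrease in the dual objective is at most the initial value $\tdegdet{A} \le ln$ (since $\tdegdet{A} = \sum_j q_j - \sum_i p_i \le \sum_j q_j \le ln \cdot n$—I will need to be careful here and use instead that $\tdegdet{A}$ equals the maximum matching weight, hence at most $ln$, because a perfect matching has $n$ edges each of weight at most $l$, giving $\tdegdet{A} \le ln$). Wait—that gives $\tdegdet{A} \le ln$ directly from edge weights, which is the bound I want.

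Next I would combine these observations. Since each dual update step decreases the dual objective by at least one (by $\Delta \le -1$ in Lemma~\ref{lem:dual_update}), and the dual objective starts at $\tdegdet{A} \le ln$ and never goes below $\degdet{A} \ge 0$, the total number of dual-update steps over the entire run of the algorithm is at most $ln$. Each such step raises any fixed $p_i$ by at most one and any fixed $q_j$ by at most one. Therefore, starting from the initial values bounded by $ln$ (Lemma~\ref{lem:bound_pq}), after all updates we have $p_i \le ln + ln = 2ln$ and $q_j \le ln + ln = 2ln$ for every $i \in R$ and $j \in C$, which is the claimed bound.

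The main obstacle I anticipate is pinning down the accounting precisely: one must verify that matrix modifications genuinely leave $(p,q)$ untouched (so only dual updates contribute to growth) and that the dual objective is monotonically nonincreasing across the interface between a dual-update phase and the next matrix-modification phase—i.e., that Phase~3 applied at the feasible-but-suboptimal point does not reset the objective upward. This follows because Phase~3 does not touch $(p,q)$ and hence does not change $\sum_j q_j - \sum_i p_i$; the only subtlety is confirming that after Phase~3 the value $\tdegdet{\bar A}$ (the optimum of $\D{\bar A}$) has strictly dropped while $(p,q)$ is merely feasible, so that the subsequent dual updates have room to proceed but the cumulative count is still governed by the single global potential $\sum_j q_j - \sum_i p_i$ decreasing from its initial value. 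Once this bookkeeping is laid out, the arithmetic $ln + ln = 2ln$ is immediate.
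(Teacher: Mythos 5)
Your proof is correct and follows essentially the same route as the paper: initial bounds of $ln$ from Lemma~\ref{lem:bound_pq}, the observation that only the dual updates~\eqref{eq:update_pq} change $\prn{p,q}$ and each raises every $p_i$ and $q_j$ by at most one, and a count of at most $ln$ updates because each one decreases the dual objective by at least one while that objective starts at $\tdegdet{A}\leq ln$ and stays nonnegative. The paper phrases the count as at most $\tdegdet{A}-\degdet{A}\leq ln$ updates, which is the same bookkeeping you carry out with the global potential $\sum_j q_j - \sum_i p_i$.
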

\begin{proof}
  From \cref{lem:bound_pq}, the initial values of $p_i$ and $q_j$ are bounded by $ln$.
  In every update of $\prn{p, q}$, the values $p_i$ and $q_j$ increase by at most one from the update rule~\eqref{eq:update_pq}.
  In addition, $\prn{p, q}$ is updated at most $\tdegdet{A} - \degdet{A} \leq ln$ times because the objective value $\sum_{j \in C} q_j - \sum_{i \in R} p_i$ of the dual problem decreases by at least one in every update.
  Therefore, at any iteration of the algorithm, it holds $p_i, q_j \leq ln + \tdegdet {A} \leq ln + ln = 2ln$.
\end{proof}

The time complexity of our algorithm is as follows.
\begin{theorem} \label{thm:complexity_tightness}
  Let $A(s)$ be an $n \times n$ nonsingular LM-polynomial matrix and let $l$ be the maximum degree of an entry in $A(s)$.
  Then Algorithm for Tightness runs in $\Order\prn{l^2 n^{\omega + 2}}$ time, where $2 < \omega \leq 3$ is the matrix multiplication exponent.
\end{theorem}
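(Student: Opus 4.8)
The plan is to account for the cost of each of the three phases, multiply the per-iteration cost by the number of iterations, and argue that the matrix multiplications in~\eqref{eq:update_A_2} dominate. First I would bound the number of outer iterations of Algorithm for Tightness. Each iteration of Phase~3 decreases $\tdegdet{A}$ by at least one by the corollary to Lemma~\ref{lem:tilde_A_optimality}, while $\degdet{A}$ is preserved; since $\tdegdet{A}-\degdet{A}\ge 0$ and the initial gap is at most $ln$ (because by Lemma~\ref{lem:bound_pq} the initial dual values, hence $\tdegdet{A}$, are $O(ln)$, and $\degdet{A}\ge 0$), there are at most $O(ln)$ outer iterations.

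Next I would bound the cost of a single outer iteration. Phase~1 (run only once) computes a maximum-weight perfect matching by the Hungarian method and a shortest-path potential on the residual graph $G_M$; with edge weights bounded by $l$ this costs $O(n^3 l)$ or better, which is dominated. Phase~2 checks $\trank A^\# = n$, equivalently whether $A^\#$ is nonsingular, by solving an independent matching problem on the matroids associated with $\binom{Q^\#}{T^\#}$~\cite{Murota1987}; this is polynomial in $n$ and dominated. For Phase~3: computing a minimizer $J^\ast$ of~\eqref{eq:rank_identity_min} is again an independent matching computation; the backward elimination producing the upper-triangular $U$ with $\rank\bar Q^\#[R_Q,J^\ast]=\trank\bar Q^\#[R_Q,J^\ast]$ costs $O(n^\omega)$. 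The expensive part is forming $\bar A(s)$ via~\eqref{eq:update_A_2}. By~\eqref{eq:As_neg} we have $A(s)=D_p^{-1}(s)\bigl(A^\#+\sum_{k=1}^K s^{-k}V_k\bigr)D_q(s)$ with $V_K\ne O$, so $\bar A(s)$ is obtained by left-multiplying $A^\#$ and each $V_k$ by the constant matrix $\begin{pmatrix}U&O\\O&I\end{pmatrix}$, i.e.\ by $K+1$ constant $n\times n$ matrix multiplications, each costing $O(n^\omega)$. Since some entry $A_{i,j}(s)$ has a term of degree $q_j-p_i-K\ge 0$, we get $K\le q_j\le 2ln$ by Lemma~\ref{lem:bound_l}. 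Hence Phase~3 costs $O(ln\cdot n^\omega)=O(ln^{\omega+1})$ per iteration. Finally, the dual-update subroutine of Section~\ref{sec:dual_updates} that restores optimality of $(p,q)$ after the modification: each König vertex-cover computation on $G^\#$ costs $O(n^{2.5})$ (or $O(n^\omega)$), and by Lemma~\ref{lem:dual_update} the dual objective strictly decreases each time, so by the same gap argument there are $O(ln)$ such updates, contributing $O(ln^{\omega+1})$, again dominated by, or comparable to, the matrix-multiplication cost.

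Multiplying the per-iteration bound $O(ln^{\omega+1})$ by the $O(ln)$ outer iterations yields the claimed $O(l^2 n^{\omega+2})$ running time. The main obstacle I expect is the bookkeeping for the number of \emph{inner} dual updates and the bound $K\le 2ln$: one must verify that Lemma~\ref{lem:bound_l} remains valid throughout, i.e.\ that the potentials never exceed $2ln$ even after many outer iterations interleaved with inner updates, and that the total number of inner updates across \emph{all} outer iterations is still $O(ln)$ rather than $O(ln)$ per outer iteration (the dual objective is monotone decreasing globally, so the aggregate bound holds); getting these two counting arguments to line up cleanly, together with confirming that the Phase~1/Phase~2 matroid computations are genuinely dominated by $O(l^2 n^{\omega+2})$, is the delicate part, whereas the matrix-multiplication accounting itself is routine.
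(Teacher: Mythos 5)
Your proposal is correct and follows essentially the same route as the paper's proof: bound the number of Phase~2/3 iterations by $\tdegdet{A}-\degdet{A}\le ln$, bound the cost of forming $\bar A(s)$ by $K+1=O(ln)$ constant matrix multiplications using Lemma~\ref{lem:bound_l}, and observe that Phase~1, the independent-matching checks, and the dual updates (whose total number is bounded aggregately by the monotone decrease of the dual objective, at most $\tdegdet{A}\le ln$ times) are dominated, giving $O(l^2 n^{\omega+2})$. The counting subtleties you flag are exactly what Lemma~\ref{lem:bound_l} and the paper's global bound on dual updates settle, so no gap remains.
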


\begin{proof}
  Phase~1 can be done in $\Order\prn{n^3}$ time by the Hungarian method~\cite{Kuhn1955} and shortest path algorithms such as the Bellman--Ford algorithm.
  Consider the time complexity in every iteration of Phases~2 and~3.
  In Phase~2, the nonsingularity of the tight coefficient matrix $A^\#$ can be checked via the rank identity~\eqref{eq:rank_identity_min}.
  Thus an efficient way is to obtain a minimizer $J^*$ of~\eqref{eq:rank_identity_min} before Phase 2, and then check the nonsingularity of $A^\#$ by~\eqref{eq:rank_identity_min}.
  The minimizer $J^*$ can be found from a residual graph constructed by an augmenting path type algorithm~\cite{Murota1987}, which runs in $\Order\prn{n^3 \log n}$ time~\cite{Cunningham1986}.
  The computation of $\bar{A}(s)$ in Phase~3 can be done in $\Order\prn{N n^\omega} = \Order\prn[\big]{\max_{j \in C} q_j n^\omega} = \Order\prn{l n^{\omega + 1}}$ time from \cref{lem:bound_l}, where $\prn{p, q}$ is a dual optimal solution of $\D{A}$ and $N$ is in~\eqref{eq:As_neg}.
  In addition, since the number of iterations of Phases~2 and~3 is at most $\tdegdet{A} - \degdet{A} \leq ln$, the running time in Phases~2 and~3 is $\Order\prn{l^2 n^{\omega+2}}$.
  Finally, the updates of $\prn{p, q}$ run in $\Order\prn{ln^4}$ time: $\prn{p, q}$ is updated at most $\tdegdet{A} \leq ln$ times, and in every update, we can find a vertex cover in $\Order\prn{n^3}$ time by Ford--Fulkerson's algorithm.
  Thus the total running time is $\Order\prn{l^2 n^{\omega+2}}$.
\end{proof}

\begin{theorem} \label{thm:complexity_all}
  For a DAE~\eqref{eq:laplaced_mixed_dae} with $n \times n$ nonsingular mixed polynomial matrix $A(s)$, our algorithm returns an equivalent DAE of index zero or one in $\Order\prn{l^2 n^{\omega+2}}$ time, where $2 < \omega \leq 3$ is the matrix multiplication exponent and $l$ is the maximum degree of entries in $A(s)$.
\end{theorem}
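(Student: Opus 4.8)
The plan is to assemble Theorem~\ref{thm:complexity_all} from the pieces already developed in the paper, treating it as a routine bookkeeping argument rather than a new mathematical insight. First I would recall the pipeline: given the mixed polynomial matrix $A(s)$, Section~\ref{sec:mixed_to_LM} converts the DAE~\eqref{eq:laplaced_mixed_dae} into the augmented LM-DAE~\eqref{eq:LM_dae}, whose coefficient matrix is an LM-polynomial matrix $\binom{Q(s)}{T(s)}$ of size $2n \times 2n$; by Lemma~\ref{lem:B} we may take $D = I$, so no arithmetic on parameters is needed. I would then invoke Theorem~\ref{thm:complexity_tightness} applied to this $2n \times 2n$ LM-polynomial matrix to produce, in $\Order{l^2 n^{\omega+2}}$ time, a unimodular transformation making it upper-tight (the size doubling only changes constants, since $(2n)^{\omega+2} = \Order{n^{\omega+2}}$, and the maximum entry degree is still $l$). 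By Corollary~\ref{cor:MS_equivalent_validity_condition} the MS-algorithm then applies to the transformed DAE and returns an equivalent DAE of index at most one; Proposition~\ref{prop:ms-alg} guarantees correctness. Finally I would note that the MS-algorithm itself costs $\Order{n^4 + ln^2}$ (as stated in the introduction), which is dominated by $\Order{l^2 n^{\omega+2}}$, so the overall running time is $\Order{l^2 n^{\omega+2}}$.

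A few small points need to be verified along the way. I would check that the equivalence of solution sets is preserved through the whole chain: the augmentation step preserves solutions by the Proposition just before the Example in Section~\ref{sec:mixed_to_LM} (the $\tilde{z}$-component of any solution of~\eqref{eq:LM_dae} solves~\eqref{eq:laplaced_mixed_dae}, and conversely), the unimodular transformation $U(s)A(s)\tilde{x}(s) = U(s)\hat{f}(s)$ preserves the solution set because unimodular row operations correspond to adding derivatives of equations to other equations (Section~\ref{sec:index_reduction_algorithm}), and the MS-algorithm returns an equivalent DAE by Proposition~\ref{prop:ms-alg}. I would also remark that the output index is ``zero or one'': it is at most one by Corollary~\ref{cor:MS_equivalent_validity_condition}, and it is of course a nonnegative integer, hence zero or one.

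The only genuinely delicate issue—and the one I would flag as the main obstacle—is making sure the size blow-up from $n$ to $2n$ and the bound on the maximum entry degree are handled correctly when quoting Theorem~\ref{thm:complexity_tightness}. The augmented matrix has blocks $I$, $Q(s)$, $-I$, $T(s)$, so its maximum entry degree equals that of $A(s)$, namely $l$ (the identity blocks contribute degree $0$); thus substituting $2n$ for $n$ and keeping $l$ fixed in the bound $\Order{l^2 n^{\omega+2}}$ of Theorem~\ref{thm:complexity_tightness} yields $\Order{l^2 (2n)^{\omega+2}} = \Order{l^2 n^{\omega+2}}$. Everything else is a direct citation. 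Concretely, the proof would read: by the argument of Section~\ref{sec:mixed_to_LM} convert the DAE to an augmented LM-DAE with an LM-polynomial coefficient matrix of size $2n \times 2n$ and maximum entry degree $l$; apply Algorithm for Tightness, which by Theorem~\ref{thm:complexity_tightness} runs in $\Order{l^2 n^{\omega+2}}$ time and outputs an upper-tight matrix $\bar{A}(s) = U(s)A(s)$ with $U(s)$ unimodular; apply the MS-algorithm to $\bar{A}(s)\tilde{x}(s) = U(s)\hat{f}(s)$, which by Corollary~\ref{cor:MS_equivalent_validity_condition} correctly returns an equivalent DAE of index at most one in $\Order{n^4 + ln^2}$ time; the chain of transformations preserves solution sets as explained above, and the total time $\Order{l^2 n^{\omega+2}} + \Order{n^4 + ln^2} = \Order{l^2 n^{\omega+2}}$ gives the claim.
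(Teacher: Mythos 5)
Your overall route is the same as the paper's: convert the DAE to an augmented system with a $2n \times 2n$ LM-polynomial coefficient matrix of maximum entry degree $l$, run Algorithm for Tightness and invoke Theorem~\ref{thm:complexity_tightness} (the factor $(2n)^{\omega+2} = \Order{n^{\omega+2}}$ is handled correctly), and then apply the MS-algorithm via Corollary~\ref{cor:MS_equivalent_validity_condition}, with solution-set preservation argued exactly as in Sections~\ref{sec:mixed_to_LM} and~\ref{sec:index_reduction_algorithm}. All of that matches the paper.

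The gap is in your cost estimate for the MS-algorithm. You cite the introduction's figure $\Order{n^4 + ln^2}$, but that figure refers to the MS-algorithm applied to a DAE whose entries have degree at most $l$. The matrix you actually feed to the MS-algorithm is the output $\bar{A}(s)$ of Algorithm for Tightness, and the unimodular modification can raise entry degrees far beyond $l$: by Lemma~\ref{lem:bound_l} (applied to the $2n \times 2n$ matrix) one only knows $\deg \bar{A}_{i,j}(s) \leq q_j \leq 4ln$, and correspondingly $\max_{i} p_i$ can be of order $ln$, so the MS-algorithm may differentiate equations $\Theta(ln)$ times and introduce $\sum_{i} p_i = \Order{ln^2}$ dummy variables. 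Hence the introduction's bound cannot simply be quoted; the cost of the MS phase on $\bar{A}(s)$ must be re-derived. This is precisely what the paper's proof spends most of its effort on: it bounds the degrees of the modified matrix by $4ln$, observes that Step~2 requires at most $2n$ independent matching computations (hence $\Order{n^4 \log n}$), shows the resultant DAE has $\Order{ln^2}$ terms so Steps~3--4 cost $\Order{ln^2}$, and concludes the MS-algorithm costs $\Order{n^4 \log n + ln^2}$, which is dominated by $\Order{l^2 n^{\omega+2}}$ since $\omega > 2$. Your final bound is therefore correct, but the step justifying the MS-algorithm's running time on the transformed DAE is missing, and it is the only genuinely nontrivial part of this proof.
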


\begin{proof}
  First we convert the DAE into an equivalent DAE with LM-polynomial matrix $\LM{A}(s)$ of size $2n \times 2n$.
  Note that the maximum degree of an entry in $\LM{A}(s)$ is equal to $l$ by~\eqref{eq:LM_dae}.
  Hence it follows from \cref{thm:complexity_tightness} that Algorithm for Tightness for $\LM{A}(s)$ runs in $\Order\prn{l^2 n^{\omega+2}}$ time.
  The resulting DAE has a coefficient matrix such that the maximum degree of an entry is at most $4ln$, because it holds that
  \begin{align}
    \deg \LM{A}_{i,j}(s) \leq q_j - p_i \leq q_j \leq 4ln
  \end{align}
  with a feasible solution $\prn{p, q}$ of $\D{\LM{A}}$, where the last inequality is due to \cref{lem:bound_l}.
  
  Next we analyze the complexity of the MS-algorithm described in \cref{sec:ms_alg}.
  In Step~1, we can reuse a dual optimal solution $\prn{p, q}$ obtained at the termination of Algorithm for Tightness, or compute a new $\prn{\tilde{p}, \tilde{q}}$ such that $\tilde{p}_i \leq p_i$ for $i \in R$ to decrease the number of dummy variables, in $\Order\prn{n^3}$ time.
  The nonsingularity of the corresponding tight coefficient matrix can be verified by solving an independent matching problem in $\Order\prn{n^3 \log n}$ time~\cite{Cunningham1986, Murota1987}.
  Step~2 runs in $\Order\prn{n^4 \log n}$ time since we solve independent matching problems at most $2n$ times.
  We now consider the resultant DAE returned in Step~4.
  The number of original (non-dummy) variables is $2n$, and from \cref{lem:bound_l}, the orders of their derivatives are at most
  \begin{align}
    4ln + \max_{i \in R} p_i \leq 4ln + 4ln = \Order{ln}.
  \end{align}
  In contrast, the number of dummy variables is $\sum_{i \in R} p_i = \Order\prn{ln^2}$, and there is no derivative of dummy variables in the resultant DAE.
  Therefore, the number of terms in the resultant DAE is $\Order\prn{l n^2}$, and thus Step~4 runs in $\Order\prn{ln^2}$ time.
  Hence the MS-algorithm costs $\Order\prn{n^4 \log n + l n^2}$ time.
  
  Since the bottleneck in the entire algorithm is Algorithm for Tightness, the total running time of our algorithm is $\Order\prn{l^2 n^{\omega+2}}$.
\end{proof}

\section{Exploiting Dimensional Consistency}
\label{sec:exploiting_dimensional_consistency}

\subsection{Dimensional Consistency}

The principle of dimensional homogeneity claims that any equation describing a physical phenomenon must be consistent with respect to physical dimensions.
To reflect the dimensional consistency in conservation laws of dynamical systems, Murota~\cite{Murota1985b} introduced a class of mixed polynomial matrices $A(s) = Q(s) + T(s)$ that satisfy the following condition:
\begin{enumerate}[label={(MP-DC)}]
  \item[(MP-DC)] $Q(s)$ is written as
    \begin{align} \label{eq:dimensional_consistency_1}
      Q(s) =
      \diag(s^{-\lambda_1}, \ldots, s^{-\lambda_{m}})
      Q(1)
      \diag(s^{\mu_1}, \ldots, s^{\mu_n})
    \end{align}
    for some integers $\lambda_1, \ldots, \lambda_{m}$ and $\mu_1, \ldots, \mu_n$.
\end{enumerate}
A mixed polynomial matrix satisfying (MP-DC) is said to be \emph{dimensionally consistent}.
We abbreviate a dimensionally consistent mixed polynomial matrix and a dimensionally consistent LM-polynomial matrix to a \emph{DCM-polynomial matrix} and a \emph{DCLM-polynomial matrix}, respectively.

\begin{example}
  Consider the DAE~\eqref{eq:example2_eq} representing the electrical circuit shown in \cref{fig:RLC}.
  Since $Q(s)$ of the coefficient matrix $A(s)$ of~\eqref{eq:example2_eq} is constant, $A(s)$ is a DCM-polynomial matrix with all $\lambda_i$ and $\mu_j$ being zero.
  Note that $(\lambda, \mu)$ is not uniquely determined; the values
  \begin{align} \label{eq:lambda_and_mu_of_RLC}
    \lambda = (0,0,-3,-3,-3,-3,-3,-3,0,-3), \quad \mu = (0,0,0,0,0,-3,-3,-3,-3,-3)
  \end{align}
  also satisfy~\eqref{eq:dimensional_consistency_1}.
\end{example}

The condition (MP-DC) can be ``derived'' from physical observations as follows.
Suppose that a DAE $A(s)\tilde{x}(s) = \hat{f}(s)$ arises from a dynamical system and the $i$th equation and the $j$th variable have physical dimensions $X_i$ and $Y_j$, respectively.
For example, in the DAE~\eqref{eq:example2_eq}, the first, second and ninth equations have the dimension of current and others have the dimension of voltage.
Similarly, the first five variables $\tilde{\xi}_1, \ldots, \tilde{\xi}_5$ of~\eqref{eq:example2_eq} have the dimension of current and the last five variables $\tilde{\eta}_1, \ldots, \tilde{\eta}_5$ have the dimension of voltage.
Then the dimension of each nonzero entry $A_{i,j}(s)$ of $A(s)$ must be $X_iY_j^{-1}$ according to the principle of dimensional homogeneity.
An important physical observation here is that all the nonzero coefficients of entries in $Q(s)$ are naturally regarded as dimensionless because they typically represent coefficients of conservation laws.
In addition, since the indeterminate $s$ corresponds to the time derivative, its dimension is the inverse $\T^{-1}$ of the dimension $\T$ of time.
Thus if $Q_{i,j}(s) \neq 0$, then $Q_{i,j}(s)$ must be a monomial $Q_{i,j}(1) s^{d_{i,j}}$ of dimension $\T^{-d_{i,j}}$ with $d_{i,j} = \deg Q_{i,j}(s)$.
Let $\lambda_i, \mu_j \in \mathbb{Q}$ such that $X_i$ and $Y_j$ are decomposed as $X_i = \T^{\lambda_i} X_i'$ and $Y_j = \T^{\mu_j}Y_j'$, where $X_i'$ and $Y_j'$ are physical dimensions that are not relevant to $\T$ in a using system of measurement.
Now it holds $\T^{-d_{i,j}} = X_iY_j^{-1} = \T^{\lambda_i - \mu_j} X_i'Y_j'^{-1}$ for $i \in R$ and $j \in C$ with $Q_{i,j}(s) \neq 0$.
This implies $d_{i,j} = -\lambda_i + \mu_j$ and thus we have $Q_{i,j}(s) = Q_{i,j}(1)s^{-\lambda_i + \mu_j}$ for all $i \in R$ and $j \in C$.
This is equivalent to (MP-DC) if every $\lambda_i$ and $\mu_j$ are integral.
Even if not, we can take integral $(\lambda', \mu')$ satisfying~\eqref{eq:dimensional_consistency_1}~\cite[Theorem~2.2.35(2)]{Murota2000}.
See~\cite[Section~3]{Murota2000} for more detail.

As described above, $\lambda_i$ and $\mu_j$ can be taken as the exponents of $\T$ in the physical dimensions of the $i$th equation and the $j$th variable (if they are integral).
In fact, the value~\eqref{eq:lambda_and_mu_of_RLC} is taken from the DAE~\eqref{eq:example2_eq} in this way as the dimension of voltage is expressed as $\mathsf{L}^2 \mathsf{T}^{-3} \mathsf{M} \mathsf{I}^{-1}$ by the SI base units, where $\mathsf{L}, \mathsf{M}$ and $\mathsf{I}$ are dimensions of length, mass and current, respectively.

\subsection{Improved Algorithm}
\label{sec:improved_algorithm}

This section improves the matrix modification procedure in Phase~3 for DCLM-polynomial matrices preserving their dimensional consistency.

Let $A(s) = \binom{Q(s)}{T(s)}$ be a DCLM-polynomial matrix with $R_Q = \Row(Q)$, $R_T = \Row(T)$ and $C = \Col(A)$.
Let $\prn{p, q}$ be an optimal solution of $\D{A}$.
For an integer $k \in \setZ$, let
\begin{align} \label{def:Rk_Ck}
  R_k = \set{i \in R_Q}[p_i - \lambda_i = k],
  \quad
  C_k = \set{j \in C}[q_j - \mu_j = k].
\end{align}
If $Q_{i,j}(s) \neq 0$, then we have $c_{i,j} \leq q_j - p_i$ from the feasibility of $(p, q)$ and $c_{i,j} = \mu_j - \lambda_i$ by~\eqref{eq:dimensional_consistency_1}.
Hence $p_i - \lambda_i \leq q_j - \mu_j$ follows, which implies $i \in R_h$ if and only if $j \in C_k$ with $h \leq k$.
Thus, it holds $Q(s)[R_h, C_k]=O$ for integers $h, k \in \setZ$ with $h > k$.
Namely, $Q(s)$ forms a block triangular matrix.

Let $A^\# = \binom{Q^\#}{T^\#}$ denote the tight coefficient matrix with respect to $\prn{p, q}$.
From the definition of the tight coefficient matrix, $Q^\#$ forms a block diagonal matrix as
\begin{align}
  Q^\# = \begin{blockarray}{ccccccc}
    & \cdots & C_{-1} & C_0 & C_1 & C_2 & \cdots \\
    \begin{block}{c(cccccc)}
      \vdots & \ddots \\
      R_{-1} && Q^\#_{-1} \\
      R_{0}  &&& Q^\#_0 \\
      R_{1}  &&&& Q^\#_1 \\
      R_{2}  &&&&& Q^\#_2 \\
      \vdots &&&&&& \ddots \\
    \end{block}
  \end{blockarray}
  \,\, ,
\end{align}
where $Q^\#_k = Q^\#[R_k, C_k]$ for $k \in \setZ$, and empty blocks indicate zero submatrices.

Let $J^* \subseteq C$ be a minimizer of the rank identity~\eqref{eq:rank_identity_min} for $A^\#$.
Sorting rows in ascending order of $p$, the matrix modification process described in \cref{sec:matrix_modification} finds a nonsingular upper-triangular matrix $U$ such that
\begin{align} \label{eq:UQsharp}
  \rank UQ^\#[R_Q, J^*] = \trank UQ^\#[R_Q, J^*].
\end{align}
For a DCLM-polynomial matrix, supposing that rows in $R_k$ are sorted in ascending order of $p$, we find a nonsingular upper-triangular matrix $U_k$ such that
\begin{align} \label{eq:UkQk}
  \rank U_k Q_k^\#[R_k, C_k \cap J^*] = \trank U_k Q_k^\#[R_k, C_k \cap J^*]
\end{align}
for $k \in \setZ$.
Then $U = \blockdiag(\ldots, U_{-1}, U_0, U_1, U_2, \ldots)$ satisfies~\eqref{eq:UQsharp}, where $\blockdiag(B_1, B_2, \ldots, B_N)$ is a block diagonal matrix of diagonal blocks $B_1, B_2, \ldots, B_N$.

For $k \in \setZ$, let $P_k(s)$ be a diagonal polynomial matrix with $\Row(P_k) = \Col(P_k) = R_k$ whose $(i, i)$ entry is $s^{p_i}$ for each $i \in R_k$.
Let $D_p(s) = \blockdiag(\ldots, P_{-1}(s), P_0(s), P_1(s), P_2(s), \ldots)$.
Now the unimodular matrix $U_Q(s)$ defined in~\eqref{def:U_tilde} can be written as
\begin{align}
  U_Q(s)
  &= D_p^{-1}(s) \blockdiag(\ldots, U_{-1}, U_0, U_1, U_2, \ldots) D_p(s) \nonumber \\
  &= D_p^{-1}(s) \blockdiag(\ldots, U_{-1} P_{-1}(s), U_0 P_0(s),  U_1 P_1(s), U_2 P_2(s), \ldots). \label{eq:DC_UQ}
\end{align}
Then we update $A(s)$ into $\bar{A}(s) = \binom{U_Q(s)Q(s)}{T(s)}$ as written in~\eqref{eq:update_A_2}.

\begin{lemma}
  Let $A(s) = \binom{Q(s)}{T(s)}$ be an $n \times n$ DCLM-polynomial matrix.
  Then $\bar{A}(s) = \binom{U_Q(s)Q(s)}{T(s)}$ is also dimensionally consistent.
\end{lemma}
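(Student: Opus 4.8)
The plan is to show that the updated matrix $\bar{A}(s) = \binom{U_Q(s)Q(s)}{T(s)}$ still satisfies (MP-DC), i.e., that every nonvanishing subdeterminant of $\bar{Q}(s) = U_Q(s)Q(s)$ is a monomial in $s$. The cleanest way is to exhibit $\bar{Q}(s)$ in the canonical form~\eqref{eq:dimensional_consistency_1}, namely $\bar{Q}(s) = \diag{s^{-\bar\lambda_i}} \bar{Q}(1) \diag{s^{\mu_j}}$ with the \emph{same} column exponents $\mu_j$ and suitably updated row exponents $\bar\lambda_i$. First I would recall from~\eqref{eq:dimensional_consistency_1} that $Q(s) = \diag{s^{-\lambda_i}} Q(1) \diag{s^{\mu_j}}$, and from~\eqref{eq:DC_UQ} that $U_Q(s) = D_p^{-1}(s)\,\blockdiag{\ldots, U_k P_k(s), \ldots}$ where, within block $k$, the rows are those $i$ with $p_i - \lambda_i = k$.

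The key computation is to multiply these out block-by-block. Since $U_k$ is constant (it comes from the backward elimination of $Q_k^\#$), on the row set $R_k$ we get, writing $\Lambda_k(s) = \diag{s^{\lambda_i}}[i \in R_k]$ and $\Pi_k(s) = P_k(s) = \diag{s^{p_i}}[i \in R_k]$,
\begin{align*}
  \bigl(U_Q(s)Q(s)\bigr)[R_k, C]
  = \Pi_k^{-1}(s)\, U_k\, \Pi_k(s)\, \Lambda_k^{-1}(s)\, Q(1)[R_k, C]\, \diag{s^{\mu_j}}.
\end{align*}
On block $k$ we have $p_i - \lambda_i = k$ for every $i \in R_k$, so $\Pi_k(s)\Lambda_k^{-1}(s) = s^{k} I$, a scalar. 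Hence $\Pi_k^{-1}(s) U_k \Pi_k(s) \Lambda_k^{-1}(s) = s^{-k}\,\Pi_k^{-1}(s) U_k \Pi_k(s)\, s^{k} \cdot \Lambda_k^{-1}(s)\cdot s^{0}$... more directly: $U_k$ commutes past the scalar $s^k I$, giving $\Pi_k^{-1}(s) U_k \Pi_k(s) \Lambda_k^{-1}(s) = \Lambda_k^{-1}(s)\, \bigl(\Lambda_k(s)\Pi_k^{-1}(s)\bigr) U_k \bigl(\Pi_k(s)\Lambda_k^{-1}(s)\bigr) = \Lambda_k^{-1}(s)\, U_k$, since $\Pi_k(s)\Lambda_k^{-1}(s) = s^k I$ is central. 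Therefore $\bigl(U_Q(s)Q(s)\bigr)[R_k,C] = \diag{s^{-\lambda_i}}[i\in R_k]\, U_k\, Q(1)[R_k,C]\, \diag{s^{\mu_j}}$, so defining $\bar{Q}(1) := \blockdiag{\ldots, U_k Q(1)[R_k,C], \ldots}$ row-wise, we obtain exactly $\bar{Q}(s) = \diag{s^{-\lambda_i}}\,\bar{Q}(1)\,\diag{s^{\mu_j}}$ — the same $\lambda_i$ and $\mu_j$ as before. By the characterization~\eqref{eq:dimensional_consistency_1}, $\bar{Q}(s)$ satisfies (MP-DC), hence $\bar{A}(s)$ is dimensionally consistent.

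The point I expect to need the most care is the bookkeeping that $U_Q(s)$ acts block-diagonally with respect to the partition $\{R_k\}$, and that within each block the exponent shift $s^{\pm p_i}$ differs from the dimensional shift $s^{\pm\lambda_i}$ only by the block-constant scalar $s^k$, which is exactly why conjugating the constant matrix $U_k$ by $\diag{s^{p_i}}$ is the same as conjugating it by $\diag{s^{\lambda_i}}$ up to a central factor and leaves a constant block $U_k$ when folded into the $\diag{s^{-\lambda_i}}$ factor. This is precisely the structural fact established just above the lemma (that $i \in R_k \iff j \in C_k$ whenever $Q^\#_{i,j}\neq 0$, so $Q^\#$ and the elimination respect the block structure). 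Once that is in place, the rest is the routine matrix identity displayed above, and the $T(s)$ block is untouched, so nothing more is needed there.
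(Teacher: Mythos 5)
Your proposal is correct and follows essentially the same route as the paper's proof: both write $Q(s)$ and $U_Q(s)$ in block form with respect to the partition $\{R_k\}$, use the identity $P_k(s)\Lambda_k^{-1}(s)=s^kI$ to absorb the conjugation by $\diag{s^{p_i}}$ into the dimensional factors, and conclude that $U_Q(s)Q(s)=\diag{s^{-\lambda_i}}\,UQ(1)\,D_\mu(s)$, which is the form~\eqref{eq:dimensional_consistency_1} with unchanged $\lambda$ and $\mu$. The only cosmetic difference is that you argue block-by-block on the row sets $R_k$ while the paper carries out the same computation with block-diagonal matrices all at once.
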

\begin{proof}
  Let $\lambda_1, \ldots, \lambda_{m_Q}$ and $\mu_1, \ldots, \mu_n$ defined in~\eqref{eq:dimensional_consistency_1} for $A(s)$, where $m_Q = \card{\Row(Q)}$.
  For $k \in \setZ$, let $R_k$ and $C_k$ defined in~\eqref{def:Rk_Ck}, and let $\Lambda_k(s)$ denote a diagonal polynomial matrix with $\Row(\Lambda_k) = \Col(\Lambda_k) = R_k$ whose $(i, i)$ entry is $s^{\lambda_i}$ for each $i \in R_k$, and $D_\mu(s) = \diag(s^{\mu_1}, \ldots, s^{\mu_n})$.
  Then the condition~\eqref{eq:dimensional_consistency_1} for dimensional consistency is written as
  \begin{align} \label{eq:DC_Q}
    Q(s) = \blockdiag(\ldots, \Lambda_{-1}^{-1}(s), \Lambda_0^{-1}(s), \Lambda_1^{-1}(s), \Lambda_2^{-1}(s), \ldots) Q(1) D_\mu(s).
  \end{align}
  Combining~\eqref{eq:DC_UQ} and~\eqref{eq:DC_Q}, we obtain
  \begin{align}
    U_Q(s) Q(s)
    &= P^{-1}(s) \blockdiag(\ldots, U_{-1} P_{-1}(s) \Lambda_{-1}^{-1}(s), U_0 P_0(s) \Lambda_0^{-1}(s),  U_1 P_1(s) \Lambda_1^{-1}(s), \ldots) Q(1) D_\mu(s) \nonumber \\
    &= P^{-1}(s) \blockdiag(\ldots, s^{-1} U_{-1} , U_0,  s U_1, s^2 U_2, \ldots) Q(1) D_\mu(s) \nonumber \\
    &= \blockdiag(\ldots, s^{-1} P_{-1}^{-1}(s), P_0^{-1}(s), sP_1^{-1}(s), s^2P_2^{-1}(s), \ldots) UQ(1)D_\mu(s) \label{eq:DC_tilde_Q},
  \end{align}
  where we used $P_k(s) \Lambda_k^{-1}(s) = s^k I$ for $k \in \setZ$.
  From~\eqref{eq:DC_tilde_Q}, $\bar{A}(s)$ is also dimensionally consistent.
\end{proof}

\subsection{Complexity Analysis}
For a DCLM-polynomial matrix $A(s)$, we can compute $\bar{A}(s) = U(s) A(s)$ only by one constant matrix multiplication $U Q(1)$ from~\eqref{eq:DC_tilde_Q}, whereas a general LM-polynomial matrix needs $\Order\prn{ln}$ multiplications.
This improves the total running time as follows.

\begin{theorem} \label{thm:DC_complexity_tightness}
  Let $A(s)$ be an $n \times n$ nonsingular DCLM-polynomial matrix and $l$ the maximum degree of an entry in $A(s)$.
  Then Algorithm for Tightness runs in $\Order\prn{l n^4 \log n}$ time.
\end{theorem}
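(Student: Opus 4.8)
\textbf{Proof plan for Theorem~\ref{thm:DC_complexity_tightness}.}
The strategy is to re-run the complexity accounting from the proof of Theorem~\ref{thm:complexity_tightness}, replacing only the cost estimate for Phase~3 with the cheaper one afforded by the improved matrix modification of Section~\ref{sec:improved_algorithm}. First I would recall that the overall algorithm consists of Phase~1 (done once) plus at most $\tdegdet{A} - \degdet{A} \leq ln$ iterations of Phases~2 and~3, together with the dual update subroutine. Phase~1 costs $\Order{n^3}$ via the Hungarian method and a shortest-path computation, exactly as before; this is dominated. For each iteration of Phase~2, checking nonsingularity of $A^\#$ through the rank identity~\eqref{eq:rank_identity_min}---equivalently, finding a minimizer $J^*$ via the augmenting-path algorithm of Murota~\cite{Murota1987}---costs $\Order{n^3 \log n}$ by~\cite{Cunningham1986}. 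The key point is Phase~3: by~\eqref{eq:DC_tilde_Q}, the updated matrix $\bar{A}(s)$ is obtained from $A(s)$ by a \emph{single} constant matrix multiplication $U Q(1)$, costing $\Order{n^\omega}$, rather than the $\Order{l n^{\omega+1}}$ of the general case which required $K+1 = \Order{ln}$ constant multiplications in~\eqref{eq:As_neg}.

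Assembling these: the per-iteration cost is $\Order{n^3 \log n + n^\omega}$, and with $\Order{ln}$ iterations this gives $\Order{l n^4 \log n + l n^{\omega+1}}$ for Phases~2 and~3. Since $\omega \leq 3$, we have $l n^{\omega+1} \leq l n^4 = \Order{l n^4 \log n}$, so this part is $\Order{l n^4 \log n}$. The dual updates contribute $\Order{l n^4}$ as in Theorem~\ref{thm:complexity_tightness}: $\prn{p, q}$ is updated at most $\tdegdet{A} \leq ln$ times, and each update finds a K\"{o}nig--Egerv\'{a}ry vertex cover of $G^\#$ in $\Order{n^3}$ time by Ford--Fulkerson. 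Combining all contributions, the total running time is $\Order{l n^4 \log n}$.

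The only genuinely new ingredient compared to Theorem~\ref{thm:complexity_tightness} is the Phase~3 bound, and its justification is precisely the identity~\eqref{eq:DC_tilde_Q}: there the factor multiplying $Q(1)$ on the left, $\blockdiag{\ldots, s^{-1} P_{-1}^{-1}(s), P_0^{-1}(s), sP_1^{-1}(s), \ldots}\,U$, and the factor $D_\mu(s)$ on the right are both diagonal (up to the constant $U$), so forming $\bar{A}(s)$ reduces to the one constant product $UQ(1)$ followed by bookkeeping of the diagonal scalings, which is $\Order{n^2}$. I would also note that $U$ itself is computed block-by-block on the blocks $Q^\#_k = Q^\#[R_k, C_k]$ by backward elimination; since the blocks are disjoint and their sizes sum to at most $n$, the total cost of the eliminations is $\Order{n^3}$, hence absorbed. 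No step here is a serious obstacle---the argument is essentially a re-tabulation---but the one point requiring care is confirming that replacing $n^{\omega}$ terms against the $n^4 \log n$ term is legitimate for all $\omega$ in the stated range $2 < \omega \leq 3$, which it is since $\omega + 1 \leq 4$.
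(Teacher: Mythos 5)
Your proposal is correct and follows essentially the same route as the paper's proof: the per-iteration cost is dominated by the $\Order{n^3 \log n}$ nonsingularity check in Phase~2, Phase~3 drops to $\Order{n^\omega}$ via the single constant multiplication $UQ(1)$ in~\eqref{eq:DC_tilde_Q}, and with at most $\tdegdet{A} - \degdet{A} \leq ln$ iterations plus the unchanged Phase~1 and dual-update costs this yields $\Order{l n^4 \log n}$. Your additional remarks (absorbing $l n^{\omega+1}$ into $l n^4 \log n$ and computing $U$ blockwise) are fine elaborations of details the paper leaves implicit.
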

\begin{proof}
  For each iteration of Phases~2 and~3, the computation of $\bar{A}(s)$ in Phase~3 can be done in $\Order\prn{n^\omega}$ time, where $2 < \omega \leq 3$ is the matrix multiplication exponent.
  The most expensive part is the nonsingularity checking for a tight coefficient matrix in Phase~2, which requires $\Order\prn{n^3 \log n}$ time~\cite{Cunningham1986, Murota1987}.
  Since the number of iterations of Phases~2 and~3 is at most $\tdegdet{A} - \degdet{A} \leq ln$, the running time of Phases~2 and~3 is $\Order\prn{l n^4 \log n}$.
  We can check that other processes run in $\Order\prn{ln^4 \log n}$ time as in the proof of \cref{thm:complexity_tightness}.
\end{proof}

\begin{theorem} \label{thm:DC_complexity_all}
  For a DAE~\eqref{eq:laplaced_mixed_dae} with $n \times n$ nonsingular DCM-polynomial coefficient matrix $A(s)$, our algorithm returns an equivalent DAE of index zero or one in $\Order\prn{l n^4 \log n}$ time, where $l$ is the maximum degree of entries in $A(s)$.
\end{theorem}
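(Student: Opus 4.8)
The plan is to follow the proof of Theorem~\ref{thm:complexity_all}, replacing the generic conversion and matrix-modification steps with their dimensionally consistent counterparts from Section~\ref{sec:improved_algorithm}. First I would check that the augmentation~\eqref{eq:LM_dae} turns a DCM-polynomial matrix into a DCLM-polynomial matrix of size $2n \times 2n$. Writing the dimensionally consistent $Q(s)$ in the form~\eqref{eq:dimensional_consistency_1} as $\diag{s^{-\lambda_1}, \ldots, s^{-\lambda_n}}\,Q(1)\,\diag{s^{\mu_1}, \ldots, s^{\mu_n}}$, the $Q$-part $\begin{pmatrix} I & Q(s) \end{pmatrix}$ of the augmented matrix factors as a left scaling by $\diag{s^{-\lambda_1}, \ldots, s^{-\lambda_n}}$, the constant matrix $\begin{pmatrix} I & Q(1) \end{pmatrix}$, and a right scaling by $\diag{s^{\lambda_1}, \ldots, s^{\lambda_n}, s^{\mu_1}, \ldots, s^{\mu_n}}$, so it again has the shape~\eqref{eq:dimensional_consistency_1} and hence satisfies (MP-DC); equivalently, any nonvanishing subdeterminant that uses columns from both blocks reduces, through the identity columns, to a subdeterminant of $Q(s)$, which is a monomial by hypothesis. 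Since the nonzero coefficients of $\begin{pmatrix} -D & DT(s) \end{pmatrix}$ stay algebraically independent over $\setK$, the augmented matrix $\LM{A}(s)$ is a DCLM-polynomial matrix, and its maximum entry degree is still $l$.

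Second, I would run Algorithm for Tightness on $\LM{A}(s)$ using the improved Phase~3 of Section~\ref{sec:improved_algorithm}. By the lemma of Section~\ref{sec:improved_algorithm} asserting that $\bar{A}(s) = \binom{U_Q(s)Q(s)}{T(s)}$ remains dimensionally consistent, every iterate stays a DCLM-polynomial matrix, so the block-diagonal structure of its tight coefficient matrix persists and the single-multiplication update~\eqref{eq:DC_tilde_Q} is always available. Hence Theorem~\ref{thm:DC_complexity_tightness}, applied with $2n$ in place of $n$, shows that this phase terminates in $\Order{l(2n)^4 \log(2n)} = \Order{l n^4 \log n}$ time and outputs an upper-tight DCLM-polynomial matrix.

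Third, I would invoke the MS-algorithm of Section~\ref{sec:ms_alg} on the resulting DAE exactly as in the proof of Theorem~\ref{thm:complexity_all}. Upper-tightness of the output of Algorithm for Tightness yields, by Lemma~\ref{lem:tightness1}, nonsingularity of its tight coefficient matrix, so Corollary~\ref{cor:MS_equivalent_validity_condition} guarantees that the MS-algorithm correctly returns an equivalent DAE of index at most one; combined with the solution-preserving conversion of Section~\ref{sec:mixed_to_LM} and the unimodularity of $U_Q(s)$, the final DAE is equivalent to the original and has index zero or one. For the running time, Lemma~\ref{lem:bound_l} applied to the $2n \times 2n$ matrix bounds the relevant $p_i, q_j$ by $\Order{l n}$, hence the orders of derivatives by $\Order{l n}$ and the number of dummy variables by $\Order{l n^2}$, so the MS-algorithm costs $\Order{n^4 \log n + l n^2}$ as in Theorem~\ref{thm:complexity_all}. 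Adding this cost, together with the dominated costs of the conversion and of computing a dual optimal solution, to the $\Order{l n^4 \log n}$ cost of Algorithm for Tightness, the total running time is $\Order{l n^4 \log n}$.

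The main obstacle I anticipate is the first step: confirming that the augmentation~\eqref{eq:LM_dae} genuinely preserves dimensional consistency, so that Theorem~\ref{thm:DC_complexity_tightness} (rather than the weaker Theorem~\ref{thm:complexity_tightness}) is applicable. This amounts to exhibiting explicit exponent vectors for the whole $2n \times 2n$ block matrix and checking (MP-DC) for subdeterminants that mix the identity block with the $Q(s)$ block; the characterization~\eqref{eq:dimensional_consistency_1} makes the check routine, but it is the one genuinely new ingredient compared with the proof of Theorem~\ref{thm:complexity_all}. The remaining pieces (the complexity accounting for the MS-algorithm and the correctness chain) transcribe directly from arguments already established.
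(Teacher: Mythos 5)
Your proposal is correct and takes essentially the same route as the paper's proof: show the augmented coefficient matrix of~\eqref{eq:LM_dae} is a $2n \times 2n$ DCLM-polynomial matrix with entry degrees still bounded by $l$, apply Theorem~\ref{thm:DC_complexity_tightness} to it, and reuse the MS-algorithm correctness and cost analysis from Theorem~\ref{thm:complexity_all}. The only difference is that you work out explicitly, via the factorization~\eqref{eq:dimensional_consistency_1} with exponents $\prn{\lambda, \prn{\lambda, \mu}}$, the dimensional consistency of the augmented matrix, a point the paper dismisses as easily checked.
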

\begin{proof}
  We can easily check that the coefficient LM-polynomial matrix of the augmented DAE described in \cref{sec:mixed_to_LM} is also dimensionally consistent.
  Algorithm for Tightness runs in $\Order\prn{ln^4 \log n}$ time from \cref{thm:DC_complexity_tightness}.
  In addition, the MS-algorithm runs in $\Order\prn{n^4 \log n + ln^2}$ time as discussed in the proof of \cref{thm:complexity_all}.
  Thus the total running time is $\Order\prn{ln^4 \log n}$.
\end{proof}

\section{Examples}
\label{sec:example}

We give two examples below.
The first example is a simple index-4 DAE and the second example is the DAE~\eqref{eq:example2_eq} representing the electrical network shown in \cref{fig:RLC}.
Throughout the execution of our algorithm, it is emphasized that: (i) we only use combinatorial operations and numerical calculations over rational numbers (over integers in the following examples), and (ii) we do not reference values of physical quantities.

\subsection{Example of High-index DAE}
The first example is the following index-4 DAE
\begin{align} \label{eq:example1}
  \left\{\begin{aligned}
    \ddot{x}_1 - \dot{x}_1 + \ddot{x}_2 - \dot{x}_2 + x_4 &= f_1(t), \\
    \ddot{x}_1 + \ddot{x}_2 + x_3 &= f_2(t), \\
    \alpha_1 x_2 + \alpha_2 \ddot{x}_3 + \alpha_3 \dot{x}_4 &= f_3(t), \\
    \alpha_4 x_3 + \alpha_5 \dot{x}_4 &= f_4(t),
  \end{aligned}\right.
\end{align}
with independent parameters $\alpha_1, \ldots, \alpha_5$ and smooth functions $f_1, \ldots, f_4$.
The coefficient matrix $A(s) = \binom{Q(s)}{T(s)}$ corresponding to~\eqref{eq:example1} is an LM-polynomial matrix given by
\begin{align} \label{eq:example1_A}
  A(s) = \begin{pmatrix}
    s^2-s & s^2-s    &              & 1 \\
    s^2   & s^2      & 1            &   \\
          & \alpha_1 & \alpha_2 s^2 & \alpha_3 s \\
          &          & \alpha_4     & \alpha_5 s
  \end{pmatrix}.
\end{align}
The row sets $R_Q$ of $Q(s)$ and $R_T$ of $T(s)$ correspond to the first and last two rows in $A(s)$, respectively.
This polynomial matrix~\eqref{eq:example1_A} is not DCLM.
Since $\degdet{A} = \deg \prn{-\alpha_1 \alpha_5 s^3 -\alpha_1 \alpha_4 s^2 + \alpha_1 \alpha_5 s^2} = 3$ and $\tdegdet{A} = 7$, the MS-algorithm is not applicable to the DAE, which is shown in our algorithm.

Let us apply our algorithm to~\eqref{eq:example1_A}.
First, we find a dual optimal solution $p = \prn{0, 0, 0, 0}$ and $q = \prn{2, 2, 2, 1}$.
The corresponding tight coefficient matrix $A^\# = \binom{Q^\#}{T^\#}$ is
\begin{align}
  A^\# = \begin{pmatrix}
    1 & 1 &   & \\
    1 & 1 &   & \\
      &   & \alpha_2 & \alpha_3 \\
      &   &          & \alpha_5 \\
  \end{pmatrix}.
\end{align}
A minimizer $J^*$ of~\eqref{eq:rank_identity_min} for $A^\#$ is the set of the first and the second columns.
Then we can check that $\rank A^\# = Q^\#[R_Q, J^*] + T^\#[R_T, J^*] + \card{C \setminus J^*} = 1 + 0 + 2 = 3 < 4$, which implies that $A(s)$ is not upper-tight.
We convert $Q^\#[R_Q, J^*] = \begin{pmatrix} 1 & 1 \\ 1 & 1 \end{pmatrix}$ by a row transformation into
$
  \bar{Q}^\#[R_Q, J^*] = U Q^\#[R_Q, J^*] = \begin{pmatrix} & \\ 1 & 1 \end{pmatrix},
$
where $U = \begin{pmatrix} 1 & -1 \\ & 1 \end{pmatrix}$.
Using $U_Q(s) = U$, the LM-polynomial matrix $A(s)$ is modified to
\begin{align}
  A'(s) =
  \begin{pmatrix}
    1 & -1 &   &  \\
      &  1 &   &  \\
      &    & 1 &  \\
      &    &   & 1
  \end{pmatrix} 
  A(s)
  =
  \begin{pmatrix}
    -s  & -s       & -1           & 1 \\
    s^2 & s^2      &  1           &   \\
        & \alpha_1 & \alpha_2 s^2 & \alpha_3 s \\
        &          & \alpha_4     & \alpha_5 s
  \end{pmatrix}.
\end{align}
The dual solution is updated to $p' = \prn{1, 0, 0, 1}$ and $q' = \prn{2, 2, 2, 2}$, and the corresponding tight coefficient matrix $A'^\# = \binom{Q'^\#}{T'^\#}$ of $A'(s)$ is
\begin{align}
  A'^\# = \begin{pmatrix}
    -1 & -1 &    & \\
     1 &  1 &    & \\
       &    & \alpha_2 & \\
       &    &          & \alpha_5 \\
  \end{pmatrix}.
\end{align}
The minimizer $J^*$ that we used above also minimizes the right-hand side of the rank identity~\eqref{eq:rank_identity_min} for $A'^\#$.
Since $A'^\#$ is still singular, we continue the modification process.
Noting the order of rows, we transform $Q'^\#[R_Q, J^*] = \begin{pmatrix} -1 & -1 \\ 1 & 1 \end{pmatrix}$ by $U' = \begin{pmatrix} 1 & \\ 1 & 1 \end{pmatrix}$ into
\begin{align}
  \bar{Q}'^\#[R_Q, J^*] = U' Q'^\#[R_Q, J^*] = \begin{pmatrix} -1 & -1 \\ & \end{pmatrix}.
\end{align}
We have $U_Q'(s) = \diag(s^{-1}, 1) U' \diag(s, 1) = \begin{pmatrix} 1 & \\ s & 1 \end{pmatrix}$, and modify $A'(s)$ to
\begin{align}
  A''(s) =
  \begin{pmatrix}
    1 &   &   &  \\
    s & 1 &   &  \\
      &   & 1 &  \\
      &   &   & 1
  \end{pmatrix} 
  A'(s)
  =
  \begin{pmatrix}
    -s & -s       & -1           & 1   \\
       &          & -s+1         & s   \\
       & \alpha_1 & \alpha_2 s^2 & \alpha_3 s \\
       &          & \alpha_4     & \alpha_5 s
  \end{pmatrix}.
\end{align}
The dual solution is updated to $p'' = \prn{1, 3, 2, 3}$ and $q'' = \prn{2, 2, 4, 4}$.
Our algorithm halts at this point since $A''(s)$ is upper-tight, which can be checked through the nonsingularity of the tight coefficient matrix $A''^\#$ again.
Now $\degdet{A}$ is computed as $\degdet{A} = \degdet{A''} = \tdegdet{A''} = 3$.
The resulting DAE is 
\begin{align} \label{eq:example1_result}
  \left\{\begin{aligned}
    -\dot{x}_1 -\dot{x}_2 -x_3 + x_4 &= f_1(t) - f_2(t), \\
    -\dot{x}_3 + x_3 + \dot{x}_4 &= \dot{f}_1(t) - \dot{f}_2(t) + f_2(t),\\
     \alpha_1 x_2 + \alpha_2 \ddot{x}_3 + \alpha_3 \dot{x}_4 &= f_3(t), \\
     \alpha_4 x_3 + \alpha_5 \dot{x}_4 &= f_4(t),
  \end{aligned}\right.
\end{align}
which is index two.

An index-1 DAE is obtained by applying the MS-algorithm to the DAE~\eqref{eq:example1_result}.
Instead of $\prn{p'', q''}$, we now use an optimal solution $\tilde{p} = \prn{0,2,1,2}$ and $\tilde{q} = \prn{1,1,3,3}$ of $\D{A''}$ to decrease the number of dummy variables as described in the proof of \cref{thm:complexity_all}.
Then the MS-algorithm outputs the index-1 DAE
\begin{align}
  \left\{\begin{aligned}
    -\dot{x}_1 -z^{[1]}_2 -x_3 + x_4 &= f_1(t) - f_2(t), \\
    -\dot{x}_3 + x_3 + \dot{x}_4 &= \dot{f}_1(t) - \dot{f}_2(t) + f_2(t),\\
    -z^{[2]}_3 + x_3 + z^{[2]}_4 &= \ddot{f}_1(t) - \ddot{f}_2(t) + \dot{f}_2(t),\\
    -z^{[3]}_3 + x_3 + z^{[3]}_4 &= \dddot{f}_1(t) - \dddot{f}_2(t) + \ddot{f}_2(t),\\
     \alpha_1 x_2 + \alpha_2 x_3 + \alpha_3 \dot{x}_4 &= f_3(t), \\
     \alpha_1 z^{[1]}_2 + \alpha_2 \dot{x}_3 + \alpha_3 z^{[2]}_4 &= \dot{f}_3(t), \\
     \alpha_4 x_3 + \alpha_5 \dot{x}_4 &= f_4(t), \\
     \alpha_4 \dot{x}_3 + \alpha_5 z^{[2]}_4 &= \dot{f}_4(t), \\
     \alpha_4 z^{[2]}_3 + \alpha_5 z^{[3]}_4 &= \ddot{f}_4(t), \\
  \end{aligned}\right.
\end{align}
where $z^{[1]}_2$, $z^{[2]}_3$, $z^{[3]}_3$, $z^{[2]}_4$ and $z^{[3]}_4$ are dummy variables corresponding to $\dot{x}_2$, $\ddot{x}_3$, $\dddot{x}_3$, $\ddot{x}_4$ and $\dddot{x}_4$, respectively.

\subsection{Example of Electrical Network}
\label{sec:example_2}
The next example is the DAE~\eqref{eq:example2_eq} representing the electrical network in~\cref{fig:RLC}.
Since the coefficient matrix $A(s)$ is not LM-polynomial, it seems that we cannot directly apply our algorithm to $A(s)$.
However, since each of the last five rows in $A(s)$ do not contain two or more accurate constants, we can convert $A(s)$ into an LM-polynomial matrix by multiplying an independent parameter to each of the rows.
In addition, by the same logic to \cref{lem:B}, our algorithm works without actually multiplying the independent parameters by regarding nonzero entries in the last five rows as independent parameters.
Thus we see $A(s)$ as an LM-polynomial matrix $A(s) = \binom{Q(s)}{T(s)}$, where $Q(s)$ and $T(s)$ correspond to the first and last five rows in $A(s)$, respectively.
The matrix $A(s)$ meets the condition (MP-DC) for DCLM-polynomial matrices with $\lambda = \prn{0, 0, 0, 0, 0}$ and $\mu = \prn{0, 0, 0, 0, 0, 0, 0, 0, 0, 0}$.

We are now ready for applying our algorithm to $A(s)$.
In Phase 1, a dual optimal solution is obtained as $p = \prn{0, 0, 0, 0, 0, 0, 0, 0, 0, 0}$ and $q = \prn{0, 0, 1, 0, 0, 0, 0, 0, 0, 1, 0}$, which implies that $\tdegdet{A} = 2$.
The corresponding tight coefficient matrix $A^\# = \binom{Q^\#}{T^\#}$ is given by
\begin{align}
  A^\# = 
  \prn{\begin{array}{rrrrr|rrrrr}
    -1  &     &     & -1 &  1 &    &    &    &    &    \\
        &   1 &     &  1 & -1 &    &    &    &    &    \\
    \hline
        &     &     &    &    &  1 &    &  1 &    & -1 \\
        &     &     &    &    & -1 & -1 &    &    &    \\
        &     &     &    &    &    &  1 & -1 &    &    \\
    \hline
    R_1 &     &     &    &    & -1 &    &    &    &    \\
        & R_2 &     &    &    &    & -1 &    &    &    \\
        &     &   L &    &    &    &    & -1 &    &    \\
        &     &     & -1 &    &    &    &    &  C &    \\
        &     &     &    &    &    &    &    &    &  1  
  \end{array}}
  \begin{array}{@{}l}
    \rdelim\}{5}{0em}[\text{$\scriptstyle{R_Q}$}]  \\
    \\
    \\
    \\
    \\
    \rdelim\}{5}{0em}[\text{$\scriptstyle{R_T}$}]  \\
    \\
    \\
    \\
    \\
  \end{array}
  \quad .
\end{align}
A minimizer $J^*$ of the rank identity~\eqref{eq:rank_identity_min} for $A^\#$ is the set of nine columns other than the rightmost column corresponding to the variable $\tilde{\eta}_5$.
Thus we can check
\begin{align}
  \rank A^\# = Q^\#[R_Q, J^*] + T^\#[R_T, J^*] + \card{C \setminus J^*} = 4 + 4 + 1 = 9 < 10,
\end{align}
which implies that $A(s)$ is not upper-tight.
We proceed to the matrix modification process for DCLM-polynomial matrices that we described in \cref{sec:improved_algorithm}.

The row set $R_k$ and the column set $C_k$ for $k \in \setZ$ defined in~\eqref{def:Rk_Ck} is the following:
\begin{align}
  \begin{blockarray}{ccc@{}c@{}ccccc@{}c@{\,}cc}
    &\overmat{9}{-1&-1&-1&-1&-1&-1-1&-1&-1}{J^*} \\
    &\overmat{2}{-1&-1}{C_0} & \overbrace{}^{C_1} & \overmat{5}{-1&-1&-1&-1&-1}{C_0} & \overbrace{}^{C_1} & \overbrace{}^{C_0}\\
    \begin{block}{c(cc@{}c@{}ccccc@{}c@{\,}c)c}
      &-1  &     &     & -1 &  1 &    &    &    &   &    & \rightmat{5}{R_0 .} \\
      &    &   1 &     &  1 & -1 &    &    &    &   &    \\
      Q^\# = &    &     &     &    &    &  1 &    &  1 &   & -1 \\
      &    &     &     &    &    & -1 & -1 &    &   &    \\
      &    &     &     &    &    &    &  1 & -1 &   &    \\
    \end{block}
  \end{blockarray}
\end{align}
Now $Q^\#$ can be seen as a block diagonal matrix consisting of one diagonal block $Q^\#_0 = Q^\#[R_0, C_0]$ by $Q^\#[R_0, C_1] = O$.
We transform
\begin{align}
  Q_0^\#[R_0, C_0 \cap J^*]
  =
  \prn{\begin{array}{rrrrrrr}
    -1  &     &  -1 &  1 &    &    &     \\
        &   1 &   1 & -1 &    &    &     \\
        &     &     &    &  1 &    &   1 \\
        &     &     &    & -1 & -1 &     \\
        &     &     &    &    &  1 &  -1 \\
  \end{array}}
\end{align}
into $
  U Q_0^\#[R_0, C_0 \cap J^*] 
  =
  \prn{\begin{array}{rrrrrrr}
    -1  &     &  -1 &  1 &    &    &     \\
        &   1 &   1 & -1 &    &    &     \\
        &     &     &    &    &    &     \\
        &     &     &    & -1 & -1 &     \\
        &     &     &    &    &  1 &  -1 \\
  \end{array}}$, where $U = \prn{\begin{array}{ccccc} 1 &&&& \\ & 1 &&& \\ && 1 & 1 & 1 \\ &&& 1 & \\ &&&& 1 \end{array}}$.
Using $U_Q(s) = U$, we modify $A(s)$ to
\begin{align}
  A'(s) = 
  \prn{\begin{array}{rrrrr|rrrrr}
    -1  &     &     & -1 &  1 &    &    &    &    &    \\
        &   1 &   1 &  1 & -1 &    &    &    &    &    \\
    \hline
        &     &     &    &    &    &    &    &  1 & -1 \\
        &     &     &    &    & -1 & -1 &    &  1 &    \\
        &     &     &    &    &    &  1 & -1 &    &    \\
    \hline
    R_1 &     &     &    &    & -1 &    &    &    &    \\
        & R_2 &     &    &    &    & -1 &    &    &    \\
        &     &  Ls &    &    &    &    & -1 &    &    \\
        &     &     & -1 &    &    &    &    & Cs &    \\
        &     &     &    &    &    &    &    &    &  1  
  \end{array}},
\end{align}
where the third row is different between $A(s)$ and $A'(s)$.
The dual solution is updated to $p' = \prn{0, 0, 1, 0, 0, 0, 0, 0, 0, 1}$ and $q' = \prn{0, 0, 1, 0, 0, 0, 0, 0, 1, 1}$.
Since the corresponding tight coefficient matrix of $A'(s)$ is nonsingular, we stop the algorithm.
The index of the modified DAE remains at two.

Finally, by applying the MS-algorithm to the modified DAE, we obtain an index-1 DAE
\begin{align}
  \left\{\begin{aligned}
    -\xi_1 -\xi_4 +\xi_5 &= 0, \\
    \xi_2 + \xi_3 + \xi_4 - \xi_5 &= 0, \\
    \eta_4 - \eta_5 &= 0, \\
    z^{[1]}_4 - z^{[1]}_5 &= 0, \\
    -\eta_1 -\eta_2 + \eta_4 &= 0, \\
    \eta_2 - \eta_3 &= 0, \\
    R_1 \xi_1 - \eta_1 &= 0, \\
    R_2 \xi_2 - \eta_2 &= 0, \\
    L \dot{\xi}_3 - \eta_3 &= 0, \\
    -\xi_4 + C z^{[1]}_4 &= 0, \\
    \eta_5 &= V(t), \\
    z^{[1]}_5 &= \dot{V}(t),
  \end{aligned}\right.
\end{align}
where $z^{[1]}_4$ and $z^{[1]}_5$ are dummy variables corresponding to $\dot{\eta}_4$ and $\dot{\eta}_5$, respectively.

\section{Numerical Experiments}
\label{sec:experiments}

We conduct numerical experiments comparing our algorithm with the LC-method by Tan et al.~\cite{Tan2017}.
Recall that the LC-method works for linear DAEs whose associated polynomial matrix $A(s)$ has only constants, whereas our algorithm can treat a DAE containing independent parameters.

\subsection{Experiment Description}
For an even positive integer $K$, the Butterworth filter via the $K$-th Cauer topology is an electrical circuit shown in \cref{fig:cauer}.
This circuit has $n = 2K+4$ state variables $\xi_0, \xi_1, \ldots, \xi_{K+1}, \eta_0, \eta_1, \ldots, \eta_{K+1}$, where $\xi_0, \xi_1, \ldots, \xi_{K+1}$ are currents and $\eta_0, \eta_1, \ldots, \eta_{K+1}$ are voltages shown in \cref{fig:cauer}.

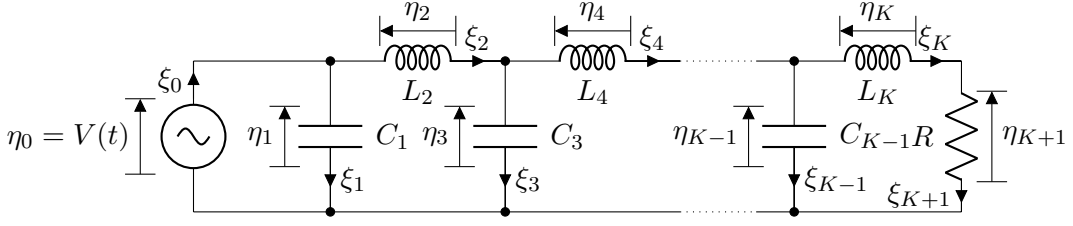
\begin{figure}[tbp]
  \centering
  \begin{circuitikz}
    \draw (6.4,-1)
      to[short] (0,-1)
      to[short] (0,-0.5)
      to[sV] (0,0.5)
      to[short, i=$\xi_0$] (0,1)
      to[short] (1.8,1)
      to[C=$C_1$, i=$\xi_1$, *-*] (1.8,-1);
    \draw[-{Triangle[angle=50:1.9mm]}] (-0.7, -0.5) -- (-0.7, 0.5) node[left, midway] {$\eta_0 = V(t)$};
    \draw (-0.9, -0.5) -- (-0.5, -0.5) {};
    \draw (-0.9, 0.5) -- (-0.5, 0.5) {};
    \draw[-{Triangle[angle=50:1.9mm]}] (1.2, -0.4) -- (1.2, 0.4) node[left, midway] {$\eta_1$};
    \draw (1.0, -0.4) -- (1.4, -0.4) {};
    \draw (1.0, 0.4) -- (1.4, 0.4) {};
    \draw (1.8,1)
      to[L, l_=$L_2$, i=$\xi_2$] (4.1,1)
      to[C=$C_3$, i=$\xi_3$, *-*] (4.1,-1);
    \draw[-{Triangle[angle=50:1.9mm]}] (3.5, -0.4) -- (3.5, 0.4) node[left, midway] {$\eta_3$};
    \draw (3.3, -0.4) -- (3.7, -0.4) {};
    \draw (3.3, 0.4) -- (3.7, 0.4) {};
    \draw[-{Triangle[angle=50:1.9mm]}] (3.45, 1.4) -- (2.45, 1.4) node[above, midway] {$\eta_2$};
    \draw (3.45, 1.2) -- (3.45, 1.6) {};
    \draw (2.45, 1.2) -- (2.45, 1.6) {};
    \draw (4.1,1)
      to[L, l_=$L_4$, i=$\xi_4$] (6.4,1);
    \draw[-{Triangle[angle=50:1.9mm]}] (5.75, 1.4) -- (4.75, 1.4) node[above, midway] {$\eta_4$};
    \draw (5.75, 1.2) -- (5.75, 1.6) {};
    \draw (4.75, 1.2) -- (4.75, 1.6) {};
    \draw [dotted] (6.4,1) -- (7.4,1);
    \draw [dotted] (6.4,-1) -- (7.4,-1);
    \draw (7.4,1)
      to[short] (7.9,1)
      to[C=$C_{K-1}$, i=$\xi_{K-1}$, *-*] (7.9,-1);
    \draw[-{Triangle[angle=50:1.9mm]}] (7.3, -0.4) -- (7.3, 0.4) node[left, midway] {$\eta_{K-1}$};
    \draw (7.1, -0.4) -- (7.5, -0.4) {};
    \draw (7.1, 0.4) -- (7.5, 0.4) {};
    \draw (7.9,1)
      to[L, l_=$L_K$, i=$\xi_K$] (10.1,1)
      to[R, l_=$R$, i_=$\xi_{K+1}$] (10.1,-1)
      to[short] (7.4,-1);
    \draw[-{Triangle[angle=50:1.9mm]}] (9.5, 1.4) -- (8.5, 1.4) node[above, midway] {$\eta_K$};
    \draw (9.5, 1.2) -- (9.5, 1.6) {};
    \draw (8.5, 1.2) -- (8.5, 1.6) {};
    \draw[-{Triangle[angle=50:1.9mm]}] (10.5, -0.6) -- (10.5, 0.6) node[right, midway] {$\eta_{K+1}$};
    \draw (10.3, -0.6) -- (10.7, -0.6) {};
    \draw (10.3, 0.6) -- (10.7, 0.6) {};
  \end{circuitikz}
  \caption{%
    Butterworth filter via the $K$-th Cauer topology.
  }
  \label{fig:cauer}
\end{figure}

A DAE representing the circuit is given by
\begin{align} \label{eq:cauer}
  \left\{\begin{aligned}
    -\xi_{k-1} + \xi_{k} + \xi_{k+1} &= 0 \quad (k=1, 3, 5, \ldots, K-1), \\
    -\xi_0 + \xi_1 + \xi_3 + \cdots + \xi_{K+1} &= 0, \\
    \eta_0 + \eta_2 + \eta_4 + \cdots + \eta_K + \eta_{K+1} &= 0, \\
    -\eta_{k-1} + \eta_{k} + \eta_{k+1} &= 0 \quad (k=2, 4, 6, \ldots, K), \\
    \eta_0 &= V(t), \\
    -\xi_k + C_k \dot{\eta}_k &= 0 \quad (k=1, 3, 5, \ldots, K-1), \\
    L_k \dot{\xi}_k - \eta_k &= 0 \quad (k=2, 4, 6, \ldots, K), \\
    R \xi_{K+1} - \eta_{K+1} &= 0.
  \end{aligned}\right.
\end{align}
The index of the DAE~\eqref{eq:cauer} is two, and the associated polynomial matrix $A(s)$ is a sparse matrix which has $6K+7$ nonzero coefficients.
Though it suffices to use simpler equations $-\xi_{K} + \xi_{K+1} = 0$ and $\eta_0 + \eta_1 = 0$ instead of the second and the third equations in~\eqref{eq:cauer}, respectively, we use them to make the MS-algorithm not applicable for the DAE.

We apply our algorithm and the LC-method to the DAE~\eqref{eq:cauer} using the following two ways of implementations:

\begin{description}
  \item[Dense Matrix Implementation\normalfont{,}]
    which stores a matrix in the memory as a two-dimensional array.
    While this implementation always requires $\Order\prn{nm}$ space for a matrix of size $m \times n$, it has less overhead than the sparse matrix implementation if the matrix is dense.
    
  \item[Sparse Matrix Implementation\normalfont{,}]
    which stores only nonzero entries of a matrix.
    A typical implementation of this type is in formats called a \emph{compressed sparse column} (CSC) or a \emph{compressed sparse row} (CSR).
    We adopt the CSR in our experiments.
    The sparse matrix implementation has an advantage that it consumes only the space proportional to the number of nonzero entries, and thus algorithms using this implementation are expected to run efficiently for sparse matrices.
\end{description}

In our algorithm, we treat the coefficients $R$, $C_k$, $L_k$ and `$\pm1$'s in the last four equations in~\eqref{eq:cauer} as independent parameters similarly to the example in \cref{sec:example_2}.
Then the associated polynomial matrix $A(s) = \binom{Q(s)}{T(s)}$ is dimensionally consistent, where $\card{\Row(Q(s))} = \card{\Row(T(s))} = K + 2$.
In the LC-method, we substitute the following real numbers:
\begin{align}
  C_k &= 2 \sin \frac{2k-1}{2K} \pi \quad (k = 1, 3, 5, \ldots, K-1), \\
  L_k &= 2 \sin \frac{2k-1}{2K} \pi \quad (k = 2, 4, 6, \ldots, K), \\
  R   &= \pi.
\end{align}
Under this setting, we compare the running time for $K = 2, 4, 8, 16, \ldots, 2^{16}$.
We implemented all algorithms in \CC\ using the library \texttt{Eigen3} for matrix computation.
It is emphasized again that we do not rely on symbolic computation.
The experiments are conducted on a laptop with Core~i7 \SI{1.7}{GHz} CPU and \SI{8}{GB} memory.

\subsection{Experimental Results}

\paragraph{Running time.}

\begin{figure}[tbp]
  \centering
  \begin{tikzpicture}[gnuplot]
\tikzset{every node/.append style={scale=0.80}}
\path (0.000,0.000) rectangle (10.500,7.000);
\gpcolor{color=gp lt color border}
\gpsetlinetype{gp lt border}
\gpsetdashtype{gp dt solid}
\gpsetlinewidth{1.00}
\draw[gp path] (1.495,0.787)--(1.675,0.787);
\draw[gp path] (10.058,0.787)--(9.878,0.787);
\node[gp node right] at (1.348,0.787) {$\SI{100}{\micro\second}$};
\draw[gp path] (1.495,1.533)--(1.675,1.533);
\draw[gp path] (10.058,1.533)--(9.878,1.533);
\node[gp node right] at (1.348,1.533) {$\SI{1}{\milli\second}$};
\draw[gp path] (1.495,2.279)--(1.675,2.279);
\draw[gp path] (10.058,2.279)--(9.878,2.279);
\node[gp node right] at (1.348,2.279) {$\SI{10}{\milli\second}$};
\draw[gp path] (1.495,3.024)--(1.675,3.024);
\draw[gp path] (10.058,3.024)--(9.878,3.024);
\node[gp node right] at (1.348,3.024) {$\SI{100}{\milli\second}$};
\draw[gp path] (1.495,3.770)--(1.675,3.770);
\draw[gp path] (10.058,3.770)--(9.878,3.770);
\node[gp node right] at (1.348,3.770) {$\SI{1}{\second}$};
\draw[gp path] (1.495,4.516)--(1.675,4.516);
\draw[gp path] (10.058,4.516)--(9.878,4.516);
\node[gp node right] at (1.348,4.516) {$\SI{10}{\second}$};
\draw[gp path] (1.495,5.262)--(1.675,5.262);
\draw[gp path] (10.058,5.262)--(9.878,5.262);
\node[gp node right] at (1.348,5.262) {$\SI{e2}{\second}$};
\draw[gp path] (1.495,6.007)--(1.675,6.007);
\draw[gp path] (10.058,6.007)--(9.878,6.007);
\node[gp node right] at (1.348,6.007) {$\SI{e3}{\second}$};
\draw[gp path] (1.495,6.753)--(1.675,6.753);
\draw[gp path] (10.058,6.753)--(9.878,6.753);
\node[gp node right] at (1.348,6.753) {$\SI{e4}{\second}$};
\draw[gp path] (2.066,0.787)--(2.066,0.967);
\draw[gp path] (2.066,6.753)--(2.066,6.573);
\node[gp node center] at (2.066,0.541) {$2^{2}$};
\draw[gp path] (3.208,0.787)--(3.208,0.967);
\draw[gp path] (3.208,6.753)--(3.208,6.573);
\node[gp node center] at (3.208,0.541) {$2^{4}$};
\draw[gp path] (4.349,0.787)--(4.349,0.967);
\draw[gp path] (4.349,6.753)--(4.349,6.573);
\node[gp node center] at (4.349,0.541) {$2^{6}$};
\draw[gp path] (5.491,0.787)--(5.491,0.967);
\draw[gp path] (5.491,6.753)--(5.491,6.573);
\node[gp node center] at (5.491,0.541) {$2^{8}$};
\draw[gp path] (6.633,0.787)--(6.633,0.967);
\draw[gp path] (6.633,6.753)--(6.633,6.573);
\node[gp node center] at (6.633,0.541) {$2^{10}$};
\draw[gp path] (7.775,0.787)--(7.775,0.967);
\draw[gp path] (7.775,6.753)--(7.775,6.573);
\node[gp node center] at (7.775,0.541) {$2^{12}$};
\draw[gp path] (8.916,0.787)--(8.916,0.967);
\draw[gp path] (8.916,6.753)--(8.916,6.573);
\node[gp node center] at (8.916,0.541) {$2^{14}$};
\draw[gp path] (10.058,0.787)--(10.058,0.967);
\draw[gp path] (10.058,6.753)--(10.058,6.573);
\node[gp node center] at (10.058,0.541) {$2^{16}$};
\draw[gp path] (1.495,6.753)--(1.495,0.787)--(10.058,0.787)--(10.058,6.753)--cycle;
\node[gp node center,rotate=-270] at (0.343,3.770) {running time};
\node[gp node center] at (5.776,0.172) {$K$};
\node[gp node right] at (4.288,6.450) {LC-method (dense)};
\gpcolor{rgb color={0.000,0.000,0.000}}
\gpsetdashtype{dash pattern=on 2.00*\gpdashlength off 3.00*\gpdashlength }
\gpsetlinewidth{2.00}
\draw[gp path] (4.435,6.450)--(5.203,6.450);
\draw[gp path] (1.495,0.923)--(2.066,1.111)--(2.637,1.354)--(3.208,1.764)--(3.778,2.409)%
  --(4.349,2.989)--(4.920,3.550)--(5.491,4.162)--(6.062,4.811)--(6.633,5.525)--(7.204,6.240);
\gpsetpointsize{6.00}
\gppoint{gp mark 9}{(1.495,0.923)}
\gppoint{gp mark 9}{(2.066,1.111)}
\gppoint{gp mark 9}{(2.637,1.354)}
\gppoint{gp mark 9}{(3.208,1.764)}
\gppoint{gp mark 9}{(3.778,2.409)}
\gppoint{gp mark 9}{(4.349,2.989)}
\gppoint{gp mark 9}{(4.920,3.550)}
\gppoint{gp mark 9}{(5.491,4.162)}
\gppoint{gp mark 9}{(6.062,4.811)}
\gppoint{gp mark 9}{(6.633,5.525)}
\gppoint{gp mark 9}{(7.204,6.240)}
\gppoint{gp mark 9}{(4.819,6.450)}
\gpcolor{color=gp lt color border}
\node[gp node right] at (4.288,6.204) {proposed (dense)};
\gpcolor{rgb color={0.000,0.000,0.000}}
\gpsetdashtype{gp dt solid}
\draw[gp path] (4.435,6.204)--(5.203,6.204);
\draw[gp path] (1.495,0.799)--(2.066,0.991)--(2.637,1.213)--(3.208,1.551)--(3.778,2.121)%
  --(4.349,2.643)--(4.920,3.194)--(5.491,3.802)--(6.062,4.426)--(6.633,5.087)--(7.204,5.762);
\gppoint{gp mark 7}{(1.495,0.799)}
\gppoint{gp mark 7}{(2.066,0.991)}
\gppoint{gp mark 7}{(2.637,1.213)}
\gppoint{gp mark 7}{(3.208,1.551)}
\gppoint{gp mark 7}{(3.778,2.121)}
\gppoint{gp mark 7}{(4.349,2.643)}
\gppoint{gp mark 7}{(4.920,3.194)}
\gppoint{gp mark 7}{(5.491,3.802)}
\gppoint{gp mark 7}{(6.062,4.426)}
\gppoint{gp mark 7}{(6.633,5.087)}
\gppoint{gp mark 7}{(7.204,5.762)}
\gppoint{gp mark 7}{(4.819,6.204)}
\gpcolor{color=gp lt color border}
\node[gp node right] at (4.288,5.958) {LC-method (sparse)};
\gpcolor{rgb color={0.000,0.000,0.000}}
\gpsetdashtype{dash pattern=on 2.00*\gpdashlength off 2.00*\gpdashlength }
\gpsetlinewidth{1.00}
\draw[gp path] (4.435,5.958)--(5.203,5.958);
\draw[gp path] (1.495,0.990)--(2.066,0.974)--(2.637,1.050)--(3.208,1.223)--(3.778,1.389)%
  --(4.349,1.618)--(4.920,1.953)--(5.491,2.302)--(6.062,2.608)--(6.633,2.999)--(7.204,3.398)%
  --(7.775,3.812)--(8.345,4.248)--(8.916,4.696)--(9.487,5.152)--(10.058,5.617);
\gppoint{gp mark 8}{(1.495,0.990)}
\gppoint{gp mark 8}{(2.066,0.974)}
\gppoint{gp mark 8}{(2.637,1.050)}
\gppoint{gp mark 8}{(3.208,1.223)}
\gppoint{gp mark 8}{(3.778,1.389)}
\gppoint{gp mark 8}{(4.349,1.618)}
\gppoint{gp mark 8}{(4.920,1.953)}
\gppoint{gp mark 8}{(5.491,2.302)}
\gppoint{gp mark 8}{(6.062,2.608)}
\gppoint{gp mark 8}{(6.633,2.999)}
\gppoint{gp mark 8}{(7.204,3.398)}
\gppoint{gp mark 8}{(7.775,3.812)}
\gppoint{gp mark 8}{(8.345,4.248)}
\gppoint{gp mark 8}{(8.916,4.696)}
\gppoint{gp mark 8}{(9.487,5.152)}
\gppoint{gp mark 8}{(10.058,5.617)}
\gppoint{gp mark 8}{(4.819,5.958)}
\gpcolor{color=gp lt color border}
\node[gp node right] at (4.288,5.712) {proposed (sparse)};
\gpcolor{rgb color={0.000,0.000,0.000}}
\gpsetdashtype{gp dt solid}
\draw[gp path] (4.435,5.712)--(5.203,5.712);
\draw[gp path] (1.495,0.819)--(2.066,0.898)--(2.637,1.045)--(3.208,1.200)--(3.778,1.401)%
  --(4.349,1.683)--(4.920,1.982)--(5.491,2.317)--(6.062,2.744)--(6.633,3.123)--(7.204,3.531)%
  --(7.775,3.969)--(8.345,4.418)--(8.916,4.875)--(9.487,5.366)--(10.058,5.878);
\gppoint{gp mark 6}{(1.495,0.819)}
\gppoint{gp mark 6}{(2.066,0.898)}
\gppoint{gp mark 6}{(2.637,1.045)}
\gppoint{gp mark 6}{(3.208,1.200)}
\gppoint{gp mark 6}{(3.778,1.401)}
\gppoint{gp mark 6}{(4.349,1.683)}
\gppoint{gp mark 6}{(4.920,1.982)}
\gppoint{gp mark 6}{(5.491,2.317)}
\gppoint{gp mark 6}{(6.062,2.744)}
\gppoint{gp mark 6}{(6.633,3.123)}
\gppoint{gp mark 6}{(7.204,3.531)}
\gppoint{gp mark 6}{(7.775,3.969)}
\gppoint{gp mark 6}{(8.345,4.418)}
\gppoint{gp mark 6}{(8.916,4.875)}
\gppoint{gp mark 6}{(9.487,5.366)}
\gppoint{gp mark 6}{(10.058,5.878)}
\gppoint{gp mark 6}{(4.819,5.712)}
\gpcolor{color=gp lt color border}
\draw[gp path] (1.495,6.753)--(1.495,0.787)--(10.058,0.787)--(10.058,6.753)--cycle;
\gpdefrectangularnode{gp plot 1}{\pgfpoint{1.495cm}{0.787cm}}{\pgfpoint{10.058cm}{6.753cm}}
\end{tikzpicture}
  \caption{Log-log plot of the experimental result: $K$ versus the running time.}
  \label{fig:graph}
\end{figure}
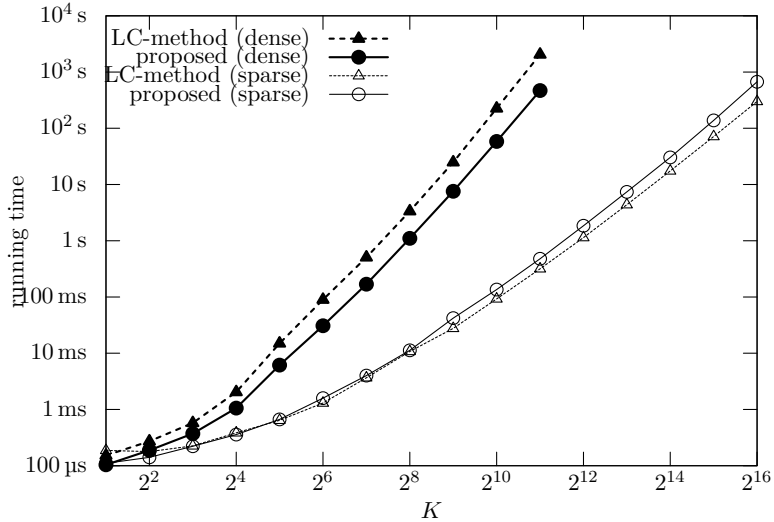

\begin{table}[tbp]
\centering
\caption{Running time (sec) of dense implementations for $K=2^{11}$.}
\label{tbl:time_dense}
\begin{tabular}{c|l@{\,}rl@{\,}r} \hline
             & \multicolumn{2}{c}{LC-method} & \multicolumn{2}{c}{proposed} \\ \hline
Phase~1      &  $1.80 \times 10^{-2}$ &   (0.00\%) & $1.70 \times 10^{-2}$ &  (0.00\%)  \\
Phase~2      &  $6.69 \times 10^{ 2}$ &  (29.61\%) & $9.69 \times 10^{ 1}$ &  (19.54\%) \\
Phase~3      &  $1.59 \times 10^{ 3}$ &  (70.26\%) & $3.97 \times 10^{ 2}$ &  (79.98\%) \\
MS-algorithm &  $1.02 \times 10^0$    &   (0.04\%) & $7.28 \times 10^{-1}$ &   (0.15\%) \\ \hline
total        &  $2.26 \times 10^{3}$  & (100.00\%) & $4.96 \times 10^{ 2}$ & (100.00\%) \\ \hline
\end{tabular}
\end{table}

\begin{table}[tbp]
\centering
\caption{Running time (sec) of sparse implementations for $K=2^{11}$.}
\label{tbl:time_sparse}
\begin{tabular}{c|l@{\,}rl@{\,}r} \hline
             & \multicolumn{2}{c}{LC-method} & \multicolumn{2}{c}{proposed} \\ \hline
Phase~1      &  $1.55 \times 10^{-2}$ &   (4.88\%) & $1.58 \times 10^{-2}$ &   (3.32\%)  \\
Phase~2      &  $1.33 \times 10^{-1}$ &  (41.87\%) & $3.82 \times 10^{-1}$ &  (80.07\%) \\
Phase~3      &  $1.25 \times 10^{-1}$ &  (39.40\%) & $39.2 \times 10^{-2}$ &   (8.21\%) \\
MS-algorithm &  $2.54 \times 10^{-2}$ &   (7.98\%) & $2.47 \times 10^{-2}$ &   (5.17\%) \\ \hline
total        &  $3.18 \times 10^{-1}$ & (100.00\%) & $4.78 \times 10^{-1}$ & (100.00\%) \\ \hline
\end{tabular}
\end{table}

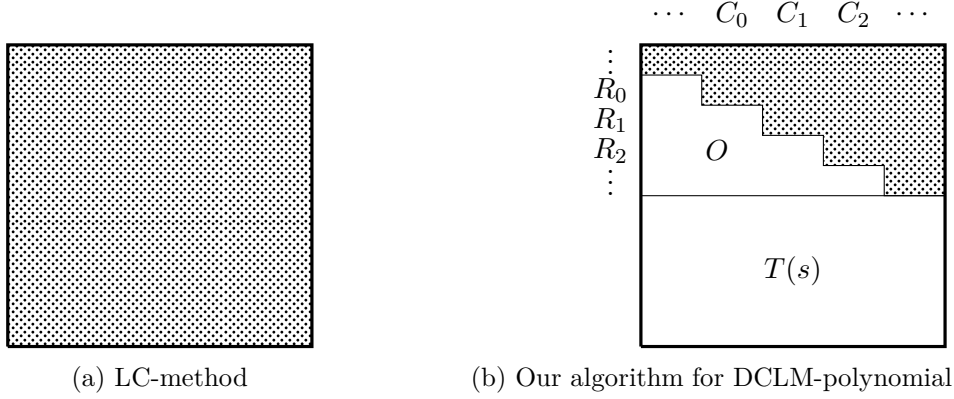
\begin{figure}[tbp]
  \begin{minipage}[b]{0.5\hsize}%
    \centering
    \begin{tikzpicture}[scale=0.8]
      \filldraw[very thick, pattern=crosshatch dots] (0,0) -- (5,0) -- (5,5) -- (0,5) -- (0,0);
    \end{tikzpicture}
    \subcaption{LC-method}
  \end{minipage}%
  \begin{minipage}[b]{0.5\hsize}
    \centering
    \begin{tikzpicture}[scale=0.8]
      \draw[very thick] (0,0) -- (5,0) -- (5,5) -- (0,5) -- (0,0);
      \draw (0, 2.5) -- (5, 2.5);
      \draw (0, 4.5) -- (1, 4.5) -- (1, 4) -- (2, 4) -- (2, 3.5) -- (3, 3.5) -- (3, 3) -- (4, 3) -- (4, 2.5) -- (5, 2.5);
      \fill[pattern=crosshatch dots] (0, 4.5) -- (1, 4.5) -- (1, 4) -- (2, 4) -- (2, 3.5) -- (3, 3.5) -- (3, 3) -- (4, 3) -- (4, 2.5) -- (5, 2.5) -- (5, 5) -- (0, 5) -- (0, 4.5);
      \node at (2.5, 1.25) {$T(s)$};
      \node at (1.25, 3.25) {$O$};
      \node at (-0.5, 4.85) {$\vdots$};
      \node at (-0.5, 4.25) {$R_0$};
      \node at (-0.5, 3.75) {$R_1$};
      \node at (-0.5, 3.25) {$R_2$};
      \node at (-0.5, 2.85) {$\vdots$};
      \node at (0.5, 5.5) {$\cdots$};
      \node at (1.5, 5.5) {$C_0$};
      \node at (2.5, 5.5) {$C_1$};
      \node at (3.5, 5.5) {$C_2$};
      \node at (4.5, 5.5) {$\cdots$};
    \end{tikzpicture}
    \subcaption{Our algorithm for DCLM-polynomial matrices}
  \end{minipage}%
  \caption{%
    Hatched regions indicate submatrices in a polynomial matrix $A(s)$ to be modified by algorithms.
    In (b), we use notations given in \cref{sec:improved_algorithm}.
  }
  \label{fig:regions}
\end{figure}

\Cref{tbl:time_dense,tbl:time_sparse} and \cref{fig:graph} show the running time of the algorithms.
On the dense matrix implementations, both algorithms did not run for $K \geq 2^{12}$ due to the lack of memory capacity.
The reasons are as follows.
Our implementations express a polynomial as an array of coefficients using \texttt{std::vector<int>} or \texttt{std::vector<float>} in \CC, and it consumes \SI{32}{bytes} even for the zero polynomial.
Since the number of entries in the input polynomial matrix $A(s)$ for $K = 2^{12}$ is $n^2 = (2K+4)^2 \geq 2^{26}$, we need at least $2^{26} \times \SI{32}{bytes} = \SI{2}{GB}$ to hold $A(s)$.
Besides the input matrix, our implementations construct several constant and polynomial matrices of similar or larger size, such as a tight coefficient matrix $A^\#$, a unimodular matrix $U(s)$ for modification in Phase~3 and an output matrix.
Thus $2^{12}$ is near the borderline of the maximum $K$ for which our implementations run on our laptop with $8$ GB memory.

It can be seen from \cref{fig:graph} that our algorithm is faster than the LC-method on their dense matrix implementations, and it is converse for their sparse ones.
This is attributed to the fact that in the process of multiplying polynomial matrices in~\eqref{eq:update_A_2} at Phase~3, the LC-method multiplies the entire of the given polynomial matrix $A(s)$ whereas our algorithm multiplies only submatrices of $A(s)$ as illustrated in \cref{fig:regions}.
Since this process is dominant on the dense matrix implementations as \cref{tbl:time_dense} indicates, the difference between the sizes of matrices to be multiplied directly affects the difference of the running times.
This process, however, does not cost much in the sparse matrix implementations, and thus Phase~2 becomes relatively expensive.
As a result, the difference between the running times on sparse matrix implementations reflect the difference between that of the independent matching algorithm and the Gaussian elimination used by our algorithm and the LC-method in Phase~2, respectively.

Recalling that the size of the DAE is $n = \Order\prn{K}$, \cref{fig:graph} shows that the running time of our algorithm grows proportionally to $\Order\prn{n^{2.84}}$ in the dense matrix implementation and $\Order\prn{n^{1.97}}$ in the sparse one for $K \geq 2^8$.
Both are much faster than the theoretical guarantee $\Order\prn{n^4 \log n}$ given in \cref{thm:DC_complexity_all}.

\section{Application to Nonlinear DAEs}
\label{sec:nonlinear_dae}

In this section, we discuss the application of our algorithm to nonlinear DAEs.
The $\sigma \nu$-method~\cite{Chowdhry2004}, which is implemented in Mathematica~\cite{mathematica}, adopts a strategy of treating nonlinear or time-varying terms as independent parameters in the Jacobian matrices of DAEs.
We first describe the $\sigma \nu$-method briefly.

Consider the index-2 nonlinear DAE~\eqref{def:nonlinear_dae} with a smooth nonlinear function $\funcdoms{g}{\setR}{\setR}$.
Their method constructs two kinds of Jacobian matrices JD and JV as follows:
\begin{align} 
  \mathrm{JD} = \prn{\pdif{F_i}{\dot{x}_j}}_{i,j} = \begin{pmatrix}
    1 & 0 & 0 \\
    1 & 0 & 0 \\
    1 & 0 & 0 
  \end{pmatrix}
  ,
  \quad
  \mathrm{JV} = \prn{\pdif{F_i}{x_j}}_{i,j} = \begin{pmatrix}
    0 & \partial g/\partial x_2 & 0 \\
    1 & 0 & 1 \\
    0 & 0 & 1 
  \end{pmatrix}.
\end{align}
If JD is nonsingular, the DAE is index zero from the implicit function theorem. 
Otherwise, the method performs Gaussian elimination on JD (and JV simultaneously) to make the bottom row of JD zero.
Then the method differentiates the equation corresponding to the bottom row, and checks the nonsingularity of JD again.
The main feature of the $\sigma \nu$-method is to treat nonlinear or time-varying terms as ``independent parameters'' to avoid complicated symbolic manipulations.
The method works according to the rule that arithmetic operations and the differentiation of independent parameters generate new independent parameters.

The $\sigma \nu$-method may fail due to this rule.
For example, let $\alpha_1$ be an independent parameter representing $\partial g/\partial x_2$ in JV.
By subtracting the first row from the second and third ones, we obtain
\begin{align}
  \mathrm{JD} = \begin{pmatrix}
    1 & 0 & 0 \\
    0 & 0 & 0 \\
    0 & 0 & 0 
  \end{pmatrix}
  ,
  \quad
  \mathrm{JV} = \begin{pmatrix}
    0 & \alpha_1 & 0 \\
    1 & \alpha_2 & 1 \\
    0 & \alpha_3 & 1 
  \end{pmatrix},
\end{align}
where $\alpha_2 = 0 - \alpha_1$ and $\alpha_3 = 0 - \alpha_1$ are newly generated parameters by the rule of arithmetic operations. 
We differentiate the second and third rows.
Then JD and JV are
\begin{align}
  \mathrm{JD} = \begin{pmatrix}
    1 & 0 & 0 \\
    1 & \alpha_2 & 1 \\
    0 & \alpha_3 & 1 
  \end{pmatrix}
  ,
  \quad
  \mathrm{JV} = \begin{pmatrix}
    0 & \alpha_1 & 0 \\
    0 & \alpha_4 & 0 \\
    0 & \alpha_5 & 0 
  \end{pmatrix},
\end{align}
where $\alpha_4$ and $\alpha_5$ are parameters corresponding to the derivatives of $\alpha_2$ and $\alpha_3$, respectively.
Although the Jacobian matrix JD is indeed singular due to $\alpha_2 = \alpha_3$, the $\sigma \nu$-method halts at this point as the method regards $\alpha_2$ and $\alpha_3$ as independent.
This failure originates from the elimination of matrices involving the independent parameter $\alpha_1$.
We have confirmed that the implementation in Mathematica actually fails on this DAE.

Our algorithm is applied to the same DAE~\eqref{def:nonlinear_dae} as follows.
Let
\begin{align}
  A(s) = \begin{pmatrix}
    s   & \alpha &   \\
    s+1 &        & 1 \\
    s   &        & 1 \\
  \end{pmatrix},
\end{align}
where $\alpha$ is an independent parameter representing $\partial g/\partial x_2$.
As described in \cref{sec:ms_alg}, the MS-algorithm is applicable to the nonlinear DAE~\eqref{def:nonlinear_dae} if $A(s)$ is upper-tight.
The tight coefficient matrix corresponding to a dual optimal solution $p = \prn{0, 0, 0}$ and $q = \prn{1, 0, 0}$ is
\begin{align}
  A^\# = \begin{pmatrix}
    1 & \alpha &   \\
    1 &        & 1 \\
    1 &        & 1 \\
  \end{pmatrix},
\end{align}
which is singular.
Thus we need to modify the matrix.
By the same logic as the discussion in \cref{sec:example_2}, we can regard $A(s)$ as an LM-polynomial matrix $A(s) = \binom{T(s)}{Q(s)}$, where $T(s)$ corresponds to the first row and $Q(s)$ corresponds to the other two ones in $A(s)$.
Then our algorithm modifies $A(s)$ to
\begin{align}
  A'(s) = \begin{pmatrix}
    s & \alpha &   \\
    1 &        &   \\
    s &        & 1 \\
  \end{pmatrix},
\end{align}
which is upper-tight (we omit the detail of this modification).
Using an optimal solution $p' = \prn{0, 1, 0}$ and $q' = \prn{1, 0, 0}$ of $\D{A'}$, the MS-algorithm obtains a purely algebraic equation
\begin{align}
  \left\{\begin{alignedat}{4}
    &z^{[1]}_1 + g(x_2)  &&= f_1(t), \\
    &x_1 &&= f_2(t) - f_3(t), \\
    &z^{[1]}_1 &&= \dot{f}_2(t) - \dot{f}_3(t), \\
    &z^{[1]}_1 + x_3 &&= f_3(t),
  \end{alignedat}\right.
\end{align}
where $z^{[1]}_1$ is a dummy variable corresponding to $\dot{x}_1$.

This example shows that our algorithm works for a DAE to which the existing index reduction algorithm cannot be applied.
Our algorithm is expected to rarely cause cancellations between nonlinear terms as it does not perform the row operations involving independent parameters.
In particular, our algorithm can be applied to nonlinear DAEs in which cancellations occur only between linear terms like the transistor amplifier DAE in~\cite{Mazzia2008}; such DAEs often appear in practice.
Therefore, although the application to nonlinear DAEs remains at the stage of a heuristic, it is anticipated that the proposed method can be useful for index reduction of nonlinear DAEs.

\section{Conclusion}
\label{sec:conclusion}

In this paper, we have proposed an index reduction algorithm for linear DAEs whose coefficient matrices are mixed matrices.
The proposed method detects numerical cancellations between accurate constants, and transforms a DAE into an equivalent DAE to which the MS-algorithm is applicable.
Our algorithm uses combinatorial algorithms on graphs and matroids, based on the combinatorial relaxation framework.
We have also developed a faster algorithm for DAEs whose coefficient matrices are dimensionally consistent.
In addition, we have confirmed through numerical experiments that our algorithm runs sufficiently faster than the theoretical guarantee for large scale DAEs, and modifies DAEs preserving physical meanings of dynamical systems.
Our algorithms can also be applied to nonlinear DAEs by regarding nonlinear terms as independent parameters.
Numerical experiments on nonlinear DAEs are left for further investigation.

\section*{Acknowledgments}
We thank anonymous referees for helpful suggestions and comments.
This work was supported in part by JST CREST, Grant Number JPMJCR14D2, Japan.
The second author's research was supported by Grant-in-Aid for JSPS Research Fellow, Grant Number JP18J22141, Japan.

\bibliographystyle{bibstyle}
\bibliography{paper}

\end{document}